\definecolor{darkblue}{rgb}{0.0, 0.2, 0.6}
\newcommand{\ssb}{\begin{adjustwidth}{2.5em}{0pt}}
\newcommand{\sse}{\end{adjustwidth}}
\colorlet{Changes@Color}{magenta}
\newcommand\N{\mathbbm{N}}
\newcommand\R{\mathbbm{R}}
\newcommand\Ub{\mathbbm{U}}
\newcommand\Sb{\mathbb{S}}
\newcommand\Pb{\mathbbm{P}}
\newcommand\Eb{\mathbbm{E}}
\newcommand{\n}{\mathfrak{n}}
\newcommand{\efrak}{\mathfrak{e}}
\newcommand{\Mcal}{\mathcal{M}}
\newcommand{\e}{\mathfrak{e}}
\newcommand{\ealpha}{\mathfrak{e}^{\alpha}}
\newcommand{\Pcal}{\mathcal{P}}
\newcommand{\Ecal}{\mathcal{E}}
\newcommand\Ebf{\mathbf{E}}
\newcommand\Pbf{\mathbf{P}}
\newcommand\Phat{\widehat{\mathcal{P}}}
\newcommand\Ehat{\widehat{\mathcal{E}}}
\newcommand{\nalpha}{\mathfrak{n}^{\alpha}}
\newcommand{\gammaalpha}{\gamma^{\alpha}}
\newcommand{\Xcal}{\mathcal{X}}
\newcommand{\Xbf}{\mathbf{X}}
\newcommand{\Lcal}{\mathcal{L}}
\newcommand{\Xhat}{\widehat{\mathcal{X}}}
\newcommand{\Xbfhat}{\widehat{\mathbf{X}}}
\newcommand{\xihat}{\widehat{\xi}}
\newcommand{\Thetahat}{\widehat{\Theta}}
\newcommand{\Thetaomega}{\Theta^{[\omega]}}
\newcommand{\xiomega}{\xi^{[\omega]}}
\newcommand{\Ltilde}{\widetilde{L}}
\newcommand{\Lhat}{\widehat{L}}
\newcommand\psihat{\widehat{\psi}}
\newcommand{\Fscr}{\mathscr{F}}
\newcommand{\Fcal}{\mathcal{F}}
\newcommand{\Gcal}{\mathcal{G}}
\newcommand{\Gscr}{\mathscr{G}}
\newcommand\Jcal{\mathcal{J}}
\newcommand\Hhat{\widehat{H}}
\newcommand{\xbf}{\mathbf{x}}
\newcommand{\Hcal}{\mathcal{H}}
\newcommand{\Ccal}{\mathcal{C}}
\newcommand{\Xscr}{\mathscr{X}}
\newcommand{\Pbb}[3]{\Pb_{#1}^{#2\rightarrow #3}}
\newcommand{\Ebb}[3]{\Eb_{#1}^{#2\rightarrow #3}}
\theoremstyle{plain}
\newtheorem{Thm}{Theorem}[section]
\newtheorem*{Thm*}{Theorem}
\newtheorem{Prop}[Thm]{Proposition}
\newtheorem*{Prop*}{Proposition}
\renewenvironment{proof}{{\bfseries Proof.}}{\qed}
\newtheorem*{Def*}{Definition}
\newtheorem*{Cor*}{Corollary}
\theoremstyle{definition}
\newtheorem*{Ex*}{Example}
\newtheorem{Rk}[Thm]{Remark}
\begin{document}

\vglue30pt

\begin{center}
    \Large\bf Spatial growth-fragmentations and excursions from
hyperplanes
\end{center}

\bigskip

\centerline{by}

\medskip

\centerline{William Da Silva\footnote{\scriptsize University of Vienna, Austria, {\tt william.da.silva@univie.ac.at}} and Juan Carlos Pardo\footnote{\scriptsize Centro de Investigación en Matemáticas A.C. Calle Jalisco s/n. 36240 Guanajuato, México, {\tt jcpardo@cimat.mx}}}

\bigskip

\bigskip

{\leftskip=2truecm \rightskip=2truecm \baselineskip=15pt \small


\noindent{\slshape\bfseries Summary.}  In this paper, we are interested in the  self-similar growth-fragmentation process that shows up when slicing half-space excursions of a $d$-dimensional Brownian motion from hyperplanes.  Such a family of processes turns out to be a  spatial self-similar growth-fragmentation processes driven by an isotropic self-similar Markov process. The former can be seen as multitype growth-fragmentation processes, in the sense of \cite{DP23}, where the set of types is $\Sb^{d-2}$, the $(d-1)$--dimensional unit sphere. In order to characterise such family of processes, we study their spinal description similarly as in the monotype \cite{Ber-GF}  and multitype \cite{DP23}  settings. Finally, we extend our study to the case when the $d$-dimensional Brownian motion is replaced by an isotropic  Markov process whose first $(d-1)$ coordinates are driven by an isotropic stable L\'evy process and the remaining coordinate is  an independent  standard real-valued Brownian motion.  

\medskip

\noindent{\slshape\bfseries Keywords.} Growth-fragmentation process, self-similar Markov process, Markov additive process, spinal decomposition, excursion theory.

 

} 

\bigskip
\bigskip

\section{Introduction and main results}

Let  us  consider  a $d$-dimensional Brownian motion for $d\ge  2$, here denoted by   $B^d=(B^d_t, t\ge 0)$. 
In this paper,  we are interested in a typical Brownian excursion above the  hyperplane $\Hcal=\{(x_1, \ldots, x_d)\in \R^{d}: x_d=0\}$  starting from $\overline{0}$ and ending at $z\in \Hcal$. Such type of excursions were first studied by Burdzy in \cite{Bur} and can be easily described by considering in the last coordinate 
 a positive Brownian excursion and in the first  $d-1$ coordinates  a  $(d-1)$-dimensional Brownian motion stopped at the lifetime of the positive
 Brownian excursion. Brownian excursions away from  the  hyperplane $\Hcal$ form a Poisson point process with characteristic measure $\n$ defined on the space of continuous functions starting from  $\overline{0}$ and ending in a given point in $\Hcal$. Moreover the latter can be described in terms of  the product of the one-dimensional  It\^o excursion measure and the probability of   a $(d-1)$-dimensional Brownian motion stopped at the lifetime of the one-dimensional excursion, see Section \ref{sec: excursion}  for further details.

 Motivated by the recent work of A\"id\'ekon and Da Silva \cite{AD} in the two dimensional case, we study the family  of processes that arises when slicing upper half-space excursions  from hyperplanes of the form $\Hcal_a=\{(x_1, \ldots, x_d)\in \R^{d}: x_d=a\}$, for $a>0$. That is, if the excursion from $\Hcal$ hits $\Hcal_a$, it will make a countable number of excursions above it, here denoted by $(e^{a,+}_i, i\ge 1)$. For every excursion $e^{a,+}_i$,  we let $\Delta e^{a,+}_i$ be the difference between the endpoint and the starting point of  $e^{a,+}_i$. Since both points  are in the same hyperplane, we observe that $\Delta e^{a,+}_i$ is a vector in $\mathbb{R}^{d-1}$ and therefore the family 
 $(\Delta e^{a,+}_i; i\ge 1)$ is a collection of vectors in $\mathbb{R}^{d-1}$ that we suppose to be ranked in decreasing order of the norm. The main result of this paper describes the law of the process $(\Delta e^{a,+}_i; i\ge 1)$ indexed by $a$ in terms of, what we have called,   {\it spatial self-similar growth-fragmentations} (in $\R^{d-1}$). Spatial self-similar growth-fragmentations $\in \R^{d}$ are extensions of multitype self-similar growth-fragmentations, recently introduced by the authors in \cite{DP23}, with the difference that the set of types is given by $\Sb^{d-1}$, the $d$-dimensional unit sphere. 

Let $X=(X_t, t\ge 0)$ be an isotropic self-similar  Markov process in $\mathbb{R}^d$. The construction of the associated spatial self-similar growth-fragmentation is very similar to   \cite{Ber-GF} and much simpler than the multitype case  \cite{DP23}.  Roughly speaking, a spatial self-similar growth-fragmentation describes a cloud of elements in $\mathbb{R}^d$, that we may refer to as atoms,  which may grow and dislocate in a binary way. Initially, the cloud of atoms starts from one particle (the common ancestor of all future particles)  whose ($d$--dimensional) \emph{size} is given by the process $X$, and that will split in a binary way whenever $X$ has a jump.   More precisely, at each jump of size $y=X(t)-X(t^-)$, we add to the cloud at time $t$ a new particle with initial size $-y$. These binary divisions are conservative, in the sense that the size of the child and the size of the parent just after division exactly sum up to the size of the parent before division. The newborn particles evolve independently of the parent, and independently of each another, according to a copy of $X$. The next generations are constructed in the same way by repeating the same division process for each new individual in the cloud. We are then interested in the collection $\Xbf(a)$ of sizes of cells alive at time $a\ge 0$, ranked in a decreasing order of their norm.  

Self-similar growth-fragmentation processes first appeared in \cite{Ber-GF}  and recently extended to the mutitype setting, first in \cite{AD} and subsequently in \cite{DS} and \cite{DP23}. Importantly, the self-similar growth-fragmentation process we just defined is not included in the aforementioned works. We refer to Section  \ref{construction}  for a formal construction of such  object and for the precise statement of its branching structure.

In order to state the first main result of this paper, let us first introduce some notation. Let $\n_+$ be the excursion measure of the excursion above the hyperplane $\Hcal$, that is the restriction of $\n $ to $\Hcal^+=\{(x_1, \ldots, x_d)\in \R^{d}: x_d>0\}$, 
 and  introduce the following family of measures  $\gamma_{\xbf}$, $\xbf\in\R^{d-1}$, which are associated with the Brownian excursions from the hyperplane $\Hcal$ \emph{conditioned} on ending at $(v,0)$, by disintegrating $\n_+$ over its endpoint. Whenever $r\ge 0$ and $\xbf\in \R^{d-1}$, we write $\Pi_r$ for the law of a Bessel bridge from $0$ to $0$ over $[0,r]$, and $\Pbb{r}{0}{\xbf}$ for the law of a $(d-1)$--dimensional Brownian bridge from $0$ to $\xbf$ with duration $r$. See \cite{AD} for the  case $d=2$. For all $\xbf\in \mathbb{R}^{d-1}\setminus\{0\}$, we let 
\[
\gamma_{\xbf} := \int_0^{\infty} \mathrm{d}r \frac{\mathrm{e}^{-\frac{1}{2r}}}{2^{\frac{d}{2}}\Gamma\left(\frac{d}{2}\right) r^{\frac{d}{2}+1}} \Pbb{r|\xbf|^2}{0}{\xbf} \otimes \Pi_{r|\xbf|^2}.
\]

\begin{Thm}\label{thm:exc} Under $\gamma_{\xbf}$, the process $(\Delta e^{a,+}_i; i\ge 1)$ is a spatial self-similar growth-fragmentation in $\R^{d-1}$ whose distribution is the same  as the  spatial self-similar growth-fragmentation  described by a $(d-1)$--dimensional isotropic Cauchy process.
\end{Thm}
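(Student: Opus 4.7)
The plan is to adapt the spinal decomposition strategy of \cite{Ber-GF}, \cite{DP23}, and \cite{AD}, identifying a distinguished \emph{spine} cell that drives the self-similar growth-fragmentation and reading off the sibling cells from the branching structure along it. The first step is to construct this spine. Using the Williams decomposition of the Bessel bridge that appears in the factorisation of $\gamma_\xbf$, one marks a unique point on the excursion (the time at which the last coordinate $B^d$ attains its maximum). The spine at level $a$ is then the unique sub-excursion above $\Hcal_a$ containing this mark, and its $\R^{d-1}$-valued size process $(Z_a, a\ge 0)$ is obtained by subtracting the horizontal positions of $B^d$ at the two boundary crossings of $\Hcal_a$ that bracket the mark.

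The second step is to identify $(Z_a, a\ge 0)$ as an isotropic $(d-1)$-dimensional Cauchy process. This relies on the classical subordination identity: a $(d-1)$-dimensional Brownian motion time-changed by a $1/2$-stable subordinator is an isotropic Cauchy process. Indeed, the Williams decomposition of the Bessel bridge at its maximum produces, as the height varies, precisely a $1/2$-stable-type time-change (via the Bessel-$3$ entrance law), while the first $d-1$ coordinates are, conditionally on the last one, a $(d-1)$-dimensional Brownian motion evaluated along this subordinator. The $1$-self-similarity of Cauchy is consistent with the space-time scaling of Brownian excursions, under which $a \mapsto \lambda a$ corresponds to $\xbf \mapsto \lambda\xbf$, so the resulting spatial growth-fragmentation has self-similarity index $1$, as required.

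The third step is to check the branching structure: conditionally on the spine, It\^o excursion theory applied to $B^d$ above $\Hcal_a$ on each side of the marked point yields a Poisson family of independent unconditional half-space excursions from $\Hcal_a$. By the strong Markov property and self-similarity, each sibling cell, re-centred at its own starting point in $\Hcal_a$, is an independent copy of the overall excursion process, and iterating this observation produces a cloud of descendants whose displacements, ranked by norm, are exactly $(\Delta e^{a,+}_i)_{i\ge 1}$. The spinal characterisation of spatial self-similar growth-fragmentations --- the natural extension to the sphere $\Sb^{d-2}$ of the monotype and multitype results of \cite{Ber-GF} and \cite{DP23}, expected to be proved earlier in the paper --- then completes the identification. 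The main obstacle will be matching the biasing inherent in the bridge factorisation of $\gamma_\xbf$ with the canonical spine measure of the Cauchy-driven spatial growth-fragmentation; in particular, one must verify that the L\'evy measure of the jumps of $(Z_a)$ agrees, up to a universal constant, with the isotropic Cauchy L\'evy measure on $\R^{d-1}$, generalising the corresponding computation in \cite{AD} from dimension two to arbitrary dimension, with an additional spherical component to track.
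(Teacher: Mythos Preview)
Your proposal takes a genuinely different route from the paper, and there is a real gap in it.

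The paper does not mark the maximum of the height coordinate. Instead it first proves the branching property of the sub-excursions above $\Hcal_a$ (\cref{prop:branching}), then produces the additive martingale $\Mcal_a=\mathds{1}_{\{T_a<\infty\}}\sum_{e\in\Hcal_a^+}|\Delta e|^{d}$ via a many-to-one formula (\cref{prop: key many to one}, \cref{thm:martingale}). The spine is obtained through the \emph{size-biased} change of measure $|\xbf|^{-d}\Mcal_a\,\mathrm{d}\gamma_{\xbf}$: under this tilt the excursion splits into two independent Bessel--Brownian half-space excursions going to infinity (\cref{thm:law change measures}), and the size of the distinguished sub-excursion at level $a$ is a $(d-1)$--dimensional Brownian motion evaluated at the hitting times of an independent linear Brownian motion, hence an isotropic Cauchy process by the classical subordination identity. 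This matches the abstract spine of \cref{thm:spine}, which is formulated precisely for the size-biased tagged cell.

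Your marked point---the time at which $z$ attains its maximum---is neither the size-biased spine nor the Eve cell of the growth-fragmentation (in \cite{AD} the Eve cell is the \emph{locally largest} sub-excursion, a different selection rule). Consequently the spinal characterisation you invoke in the last step, which is the analogue of \cref{thm:spine}, does not apply to your marked process as stated. More concretely, the subordination claim fails: under Williams' decomposition the two pieces of $z$ on either side of the maximum are Bessel(3) processes run until they \emph{first} hit the level $M$, and first-passage times of a Bessel(3) process do not form a $\tfrac12$--stable subordinator (they do not have stationary increments, since Bessel(3) is not translation invariant). In the paper the $\tfrac12$--stable subordinator only emerges \emph{after} the change of measure, where time reversal turns Brownian first-passage times (which are $\tfrac12$--stable in the level) into \emph{last}-passage times of the Bessel(3) half-space excursions. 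The ``main obstacle'' you flag at the end---matching the biasing implicit in $\gamma_{\xbf}$ with the canonical spine measure---is exactly where the argument collapses: it is the weighting by $|\Delta e|^{d}$, not the Williams marking, that produces the correct tilt and hence the Cauchy law.
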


The previous result can be extended to the case when the $d$-dimensional Brownian motion is replaced by a process   that we denote by $Z^d=(X^{d-1}, Z)$, where $X^{(d-1)}$ is a $(d-1)$-dimensional isotropic $\alpha$-stable L\'evy process, with  $\alpha\in (0,2)$, and $Z$ is an independent real-valued Brownian motion. The excursions of the process $Z^d$ away from  the  hyperplane $\Hcal$ also form a Poisson point process with characteristic measure $\n^\alpha$ defined on the space of c\'adl\'ag functions starting from  $\overline{0}$ and ending in a given point in $\Hcal$. Similarly as in the Brownian case,  the latter can be described in terms of  the product of the one-dimensional  It\^o excursion measure and the law of   a $(d-1)$-dimensional isotropic stable process stopped at the lifetime of the one-dimensional Brownian excursion, see Section \ref{sec: excursion}  for further details.

Similarly as in the Brownian case, we let $\n^\alpha_+$ be the excursion measure of the excursions of the process $Z^d$ above the hyperplane $\Hcal$, that is the restriction of $\n^\alpha $ to $\Hcal^+$, 
 and  introduce the following family of measures  $\gamma^\alpha_{\xbf}$, $\xbf\in\R^{d-1}$ which are associated with the  excursions of $Z^{d}$ from the hyperplane $\Hcal$ \emph{conditioned} on ending at $(v,0)$, by disintegrating $\n^\alpha_+$ over its endpoint. For $\xbf\in\R^{N-1}$ and $r>0$, let $\Pbb{r}{\alpha, 0}{\xbf}$ denote the law of an $\alpha$--stable bridge from $0$ to $\xbf$ over $[0,r]$. See \cite{DS} for the case $d=2$. In addition, we write $(p^{\alpha}_r, r\ge 0)$ for the transition densities of $X^{d-1}$. For all $\xbf\in \mathbb{R}^{d-1}\setminus\{0\}$, we let 
\[
\gammaalpha_{\xbf} = \int_0^{\infty} \mathrm{d}r \frac{p_1^{\alpha}(r^{-1/\alpha}v\cdot\mathbf{1})}{2\sqrt{2\pi}r^{1+\frac{\omega_d}{\alpha}}} \Pbb{r|\xbf|^2}{\alpha, 0}{\xbf} \otimes \Pi_{r|\xbf|^2},
\]
where $\mathbf{1}$ denotes the ``north pole'' in $\Sb^{d-2}$ and $\omega_d:= d-1+\frac{\alpha}{2}$. Recall that $(\Delta e^{a,+}_i; i\ge 1)$ denotes the collection of vectors in $\mathbb{R}^{d-1}$ obtained by slicing the excursion at height $a$.

\begin{Thm}\label{thm:excal} Under $\gamma^\alpha_{\xbf}$, the process $(\Delta e^{a,+}_i; i\ge 1)$ is a spatial self-similar growth-fragmentation whose distribution is the same  as the  spatial self-similar growth-fragmentation  described by an isotropic $(d-1)$--dimensional $\frac{\alpha}{2}$--stable process.
\end{Thm}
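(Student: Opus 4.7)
The plan is to follow the strategy used for Theorem~\ref{thm:exc} in the Brownian case, with the ambient self-similar Markov process along the spine identified with an isotropic $(d-1)$-dimensional $\alpha/2$-stable process instead of a Cauchy process. Since $Z$ is an independent Brownian motion, It\^o's excursion theory applied to $Z$ gives the correct decomposition of the excursion of $Z^d$ above $\Hcal$ into sub-excursions above $\Hcal_a$, regardless of the law of the horizontal component $X^{d-1}$. Hence the branching scaffolding underlying $(\Delta e^{a,+}_i; i\ge 1)$ takes the same form as in Theorem~\ref{thm:exc}, and the displacements $\Delta e^{a,+}_i$ are simply the increments of the independent stable process $X^{d-1}$ over the random durations of these Brownian sub-excursions.

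I would then set up a spinal description by distinguishing the cell that carries the overall maximum of $Z$, following the strategy of \cite{Ber-GF} and of the multitype construction in \cite{DP23}. Along this spine, the cell process is obtained by sampling $X^{d-1}$ at the first-passage times of $Z$ to increasing levels, so that the heuristic $(X^{d-1}_{\tau_a})_{a\ge 0}$ is central: since $(\tau_a)_{a\ge 0}$ is a $1/2$-stable subordinator independent of $X^{d-1}$, subordinating the isotropic $\alpha$-stable L\'evy process $X^{d-1}$ by it yields an isotropic $\alpha/2$-stable L\'evy process on $\R^{d-1}$. This explains both the appearance of the $\alpha/2$-stable process in the statement and the exponent $\omega_d=d-1+\alpha/2$ entering the definition of $\gammaalpha_{\xbf}$.

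The concrete step is then to compute the intensity measure of cell creations along this spine by combining: (i) the product structure of $\nalpha$ as the one-dimensional It\^o measure tensored with an independent stable bridge recalled in Section~\ref{sec: excursion}; (ii) the disintegration formula defining $\gammaalpha_{\xbf}$; and (iii) the scaling and isotropy of $X^{d-1}$. Matching the resulting spinal intensity with the one characterising the spatial self-similar growth-fragmentation driven by an isotropic $(d-1)$-dimensional $\alpha/2$-stable process (constructed in Section~\ref{construction}) and invoking the uniqueness of such a spinal characterisation will then conclude the proof.

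The main obstacle I expect lies precisely in this identification under $\gammaalpha_{\xbf}$. In the Brownian case, Gaussian conjugations make the computations very explicit; here one must work with the stable density $p_1^{\alpha}$ mainly through its isotropy and self-similarity. The specific weight $p_1^{\alpha}(r^{-1/\alpha}v\cdot\mathbf{1})/r^{1+\omega_d/\alpha}$ appearing in $\gammaalpha_{\xbf}$ is designed precisely to make this reduction tractable, so the argument should go through, but controlling the resulting mixture integrals and checking that the subordination heuristic lifts rigorously to the level of intensity measures is where the technical effort will be concentrated.
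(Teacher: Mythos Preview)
Your proposal captures the central mechanism correctly—subordinating an isotropic $\alpha$--stable process by the $\tfrac12$--stable subordinator of Brownian first-passage times yields an isotropic $\tfrac{\alpha}{2}$--stable process—but the route you sketch differs from the paper's and contains a gap in how the subordination is justified.

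The paper does not track ``the cell carrying the overall maximum of $Z$''. That object is the \emph{Eve cell}, which the paper only mentions in passing (referring to \cite{AD} and \cite{DS} for its construction). Instead, the paper proceeds via the martingale/change-of-measure route: it first proves a many-to-one formula analogous to \cref{prop: key many to one}, deduces that $\Mcal^{\alpha}_a := \mathds{1}_{\{T_a<\infty\}}\sum_{e\in\Hcal_a^+}|\Delta e|^{\omega_d}$ is a $(\Gcal_a)$--martingale, and then describes the excursion under the tilted measure $\mu^{\alpha}_{\xbf}$. Under $\mu^{\alpha}_{\xbf}$ the trajectory splits into two independent $(\alpha,\Hcal^+)$--excursions (isotropic stable horizontal part, three-dimensional Bessel vertical part) issued from $0$ and $(\xbf,0)$; the spine at level $a$ is the excursion ``straddling infinity'', whose size is obtained by evaluating the two horizontal stable processes at the last passage times of the Bessel components at $a$. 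By time-reversal these are Brownian hitting times, and only \emph{at this point}—after the change of measure—does the subordination argument apply.

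The gap in your version is precisely here. Under $\gamma^{\alpha}_{\xbf}$ the vertical component $Z$ is a Bessel bridge, not a Brownian motion, so the first-passage times $(\tau_a)_{a\ge 0}$ you invoke do not form a $\tfrac12$--stable subordinator; they also terminate at the finite maximum of the bridge. Moreover, the ``cell carrying the overall maximum of $Z$'' is the Eve cell, not the tagged cell arising from the spinal change of measure; identifying its law directly as $\tfrac{\alpha}{2}$--stable requires a separate argument (as in \cite{AD,DS}) rather than the one-line subordination you propose. Your plan to match intensity measures could in principle be carried out, but the cleaner and rigorous path is the one the paper takes: tilt by $\Mcal^{\alpha}_a$, obtain the two independent half-space excursions with Bessel-3 height, and read off the $\tfrac{\alpha}{2}$--stable spine from there.
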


\medskip
{\bf Related works.} 
Growth-fragmentation models have deep connections to random geometry.  The first connections were made by Bertoin, Curien and Kortchemski \cite{BCK} and Bertoin, Budd, Curien and Kortchemski \cite{BBCK}, who pointed out a remarkable class of (positive) self-similar growth-fragmentations obtained from the scaling limit of perimeter processes (see \cite{Budd}) in peeling explorations of Boltzmann planar maps. These growth-fragmentation processes are closely related to stable L\'evy processes with stability parameter  $\theta \in (\frac12, \frac32]$. Later, Miller, Sheffield and Werner constructed the same growth-fragmentation processes in the continuum \cite{MSW} for $1<\theta\le \frac32$ from a CLE exploration on a quantum disc. Moreover, the boundary case $\theta=\frac32$ shows up in the metric construction of Le Gall and Riera \cite{LG-Rie} when slicing a Brownian disc at heights. As already mentioned above, the critical Cauchy case $\theta=1$ appears when slicing a Brownian half-plane excursion at heights \cite{AD} and the case $\frac12<\theta<1$  by considering  half-plane excursions of $Z^2=(X,Z)$, where $X$ is an $\alpha$-stable L\'evy process, for $\alpha\in (1,2)$ and $Z$ and independent Brownian motion, see \cite{DS}. 

We stress that the masses in \cite{AD} and \cite{DS} are signed, with the sign depending on the time orientation of the excursions. In particular, the growth-fragmentation of \cite{DS} only yields the critical case $\theta=1$ in \cite{BBCK} after removing the negative mass in the system. Geometrically, this negative mass encodes the loops of a loop $O(n)$--model, whose gasket is described by the Boltzmann planar maps studied in \cite{BCK,BBCK}. This prompted \cite{DS} to provide a framework for self-similar \emph{signed} growth-fragmentations.

Finally, we mention that the notion of self-similar signed growth-fragmentations was recently naturally extended to a finite number of types in \cite{DP23} using deeply the interplay between self-similar Markov processes and Markov additive processes, see \cite{KP}. The present paper uses a similar viewpoint, albeit more intricate due to the presence of uncountably many types.

\bigskip
The paper is organised as follows. In Section \ref{sec: ssmp}, we provide some background on isotropic self-similar Markov processes and their Lamperti-Kiu representation in terms of Markov additive processes. In Section \ref{sec: spatial GF}, spatial growth-fragmentation processes are formally constructed. Moreover, {\it isotropic cumulant functions} are introduced whose roots will lead to martingales for spatial growth-fragmentations.  The spinal decomposition will be described in Section \ref{sec: spine isotropic}. Finally, the proofs of Theorems \ref{thm:exc} and \ref{thm:excal} are given in Section \ref{sec: excursion}.  

\bigskip
\noindent \textbf{Acknowledgments.}
We thank Andreas Kyprianou for some interesting discussions. W.D.S. acknowledges the support of the Austrian Science Fund (FWF) grant P33083-N on ``Scaling limits in random conformal geometry''.  J.C.P.  acknowledges the support of CONACyT grant A1-S-33854.


\section{Isotropic self-similar Markov processes} \label{sec: ssmp}
We start with some background on self-similar Markov processes in $\R^d$. We lay emphasis on their connection to Markov additive processes, through the Lamperti-Kiu representation. We refer to \cite{KP} for a detailed treatment of these questions.

\bigskip
\noindent \textbf{Markov additive processes.}
Let $E$ be  a locally compact, complete and separable metric space, endowed with a cemetery state $\dagger$. We also let $(\xi(t),\Theta(t), t\ge 0)$ be a regular Feller process in $\R\times E$ with probabilities ${\tt P}_{x,\theta}$, $x\in\R$, $\theta\in E$, on $(\Omega,\Fcal,\Pb)$, and denote by $(\Gcal_t)_{t\ge 0}$ the natural standard filtration associated with $(\xi,\Theta)$. We say that $(\xi,\Theta)$ is a \emph{Markov additive process} (MAP for short) if for every bounded measurable $f: \R\times E \rightarrow \R$, $s,t\ge 0$ and $(x,\theta)\in\R\times E$,
\[
\mathtt{E}_{x,\theta}\Big[f(\xi(t+s)-\xi(t),\Theta(t+s))\mathds{1}_{\{t+s<\varsigma\}} \Big| \Gcal_t\Big] = \mathds{1}_{\{t<\varsigma\}}\mathtt{E}_{0,\Theta(t)}\Big[f(\xi(s),\Theta(s))\mathds{1}_{\{s<\varsigma\}}\Big],
\]
where $\varsigma := \inf\{t>0, \, \Theta(t)= \dagger\}$. Observe that the process $\Theta$ is itself a regular Feller process in $E$. 

We call $\xi$ the \emph{ordinate} and $\Theta$ the \emph{modulator} of the MAP. The notation 
\[
\mathtt{P}_{\theta}:=\mathbb{P}(\;\cdot\; |\,\xi(0)=0 \,\text{and}\, \Theta(0)=\theta)\quad\textrm{ for }\quad \theta\in E,
\] will be in force throughout the paper. Whilst  MAPs have found a prominent role in e.g. classical applied probability models for queues and dams when $\Theta$ is a Markov chain with a finite state space (see for instance \cite{Asm} and \cite{Iva}), the case that $\Theta$ is a general Markov process has received somewhat less attention. However, this case has been treated in the literature before, see for instance \cite{Cin} and references therein.

MAPs are a natural extension of a L\'evy process in the sense that $\Theta$ is an arbitrary well-behaved Markov process and $((\xi(t), \Theta(t))_{t\ge 0}, \mathtt{P}_{x,\theta})$ is equal in law to  
$((\xi(t)+x, \Theta(t))_{t\ge 0}, \mathtt{P}_{\theta})$. Moreover when $\Theta$ is a Markov chain with a finite state space a more natural description can be given for the ordinate process $\xi$. Indeed it can be thought as the concatenation of L\'evy processes  which depend on the current type in $E$ given by $\Theta$. Here we are interested in the specific case when $E=\Sb^{d-1}$ which describes the angles of a process in $\R^d$.

\bigskip
\noindent \textbf{Self-similar Markov processes in $\R^d$ and isotropy.}
Let  $E=\Sb^{d-1}$ and $\alpha\in \R$.  
Let $X$ be a Markov process in $\R^d$, which under $\Pb_{\xbf}$, $\xbf\in\R^d\setminus\{0\}$, starts from $\xbf$.
We say that $X$ is a \emph{self-similar} process if for all $c>0$ and all $\xbf\in\R^d$, 
\[
\Big( (cX(c^{-\alpha}s), s\ge 0), \Pb_{\xbf}\Big) \overset{d}{=} \Big( (X(s), s\ge 0), \Pb_{c\xbf}\Big).
\]
The Lamperti representation of $\alpha$-self-similar $\R^d$--valued Markov processes is the content of the following proposition which is attributed to \cite{Kiu} with additional clarification from  \cite{ACGZ}, building on the original work of \cite{Lam}.
\begin{Prop} \label{prop: Lamperti R^d}
Let $X$ be a self-similar $\R^d$--valued Markov process with index $\alpha$. Then there exists a Markov additive process $(\xi,\Theta)$ in $\R\times \Sb^{d-1}$ such that
\begin{equation} \label{eq: Lamperti R^d}
X(t) = \mathrm{e}^{\xi(\varphi(t))} \Theta(\varphi(t)), \quad t\le I_{\varsigma}:= \int_0^{\varsigma} \mathrm{e}^{\alpha\xi(s)}\mathrm{d}s,
\end{equation}
where
\[
\varphi(t) := \inf\left\{s>0, \; \int_0^s \mathrm{e}^{\alpha \xi(u)}\mathrm{d}u > t \right\},
\]
and $I_{\varsigma} $ is the lifetime of $X$. Conversely, any process $X$ satisfying \eqref{eq: Lamperti R^d} is a self-similar Markov process in $\R^d$ with index $\alpha$.
\end{Prop}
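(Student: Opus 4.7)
The proof proposal follows the classical Lamperti-Kiu scheme: polar decomposition of $X$, followed by a logarithmic change of the radial part and a time change by an additive functional that removes the self-similar scaling.

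For the forward direction, starting from a self-similar Markov process $X$ in $\R^d$ with index $\alpha$, I would first write the polar decomposition $X(t) = |X(t)|\, \Theta^\circ(t)$ with $\Theta^\circ(t) := X(t)/|X(t)| \in \Sb^{d-1}$, well defined until $X$ hits $0$ or its lifetime. I then introduce the additive functional $A(t) := \int_0^t |X(u)|^{-\alpha}\, \mathrm{d}u$ and its right-continuous inverse $\tau(t) := \inf\{s > 0 : A(s) > t\}$, and set
\[
\xi(t) := \log |X(\tau(t))|, \qquad \Theta(t) := \Theta^\circ(\tau(t)),
\]
killing both at the first passage of $X$ to $0$ and sending $\Theta$ to the cemetery state $\dagger$ there. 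By construction $\tau$ is the functional inverse of $\varphi$ appearing in the statement, so the identity \eqref{eq: Lamperti R^d} is tautological; what needs real work is verifying that $(\xi,\Theta)$ is a MAP.

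The core computation is to show the MAP property. Fix $t\ge 0$ and set $s:=\tau(t)$, which is a stopping time for $X$. On $\{\Theta(t)\neq\dagger\}$, the strong Markov property of $X$ at $s$ gives that $(X(s+u), u\ge 0)$ is, conditionally on $\Gcal_t$, distributed as $X$ started from $X(s)=\mathrm{e}^{\xi(t)}\Theta(t)$. Self-similarity with index $\alpha$ then allows us to factor out the radial scale: this future trajectory has the same law as $\bigl(\mathrm{e}^{\xi(t)} X'(\mathrm{e}^{-\alpha\xi(t)} u), u\ge 0\bigr)$, where $X'$ is an independent copy of $X$ started from $\Theta(t)\in\Sb^{d-1}$. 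Plugging this back into the definitions of $\xi$ and $\Theta$, the factor $\mathrm{e}^{\xi(t)}$ cancels out entirely in the spherical part, becomes an additive shift $\xi(t)$ in the radial part, and the prefactor $\mathrm{e}^{-\alpha\xi(t)}$ in the time variable is exactly the one absorbed by the rescaling of $A$. We conclude that, conditional on $\Gcal_t$, the post-$t$ process $(\xi(t+\cdot)-\xi(t),\Theta(t+\cdot))$ has the law of $(\xi,\Theta)$ under $\mathtt{P}_{\Theta(t)}$, which is the defining property of a MAP. Feller regularity of $(\xi,\Theta)$ follows from that of $X$ combined with standard results on time changes by continuous additive functionals.

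For the converse, starting from a MAP $(\xi,\Theta)$ in $\R\times\Sb^{d-1}$, one defines $X$ via \eqref{eq: Lamperti R^d}. The Markov property of $X$ is a direct consequence of the Markov property of $(\xi,\Theta)$ together with the fact that $\varphi$ is adapted to the natural filtration of $(\xi,\Theta)$. Self-similarity of index $\alpha$ is then a short computation: under $\Pb_{c\xbf}$ with $|\xbf|=1$, shifting the ordinate by $\log c$ (which preserves the MAP law started from $\Theta(0)=\xbf/|\xbf|$) and changing variables in the defining integral for $\varphi$ via $u\mapsto c^{-\alpha}u$ yields exactly the scaling identity $(cX(c^{-\alpha}s))_{s\ge 0}\overset{d}{=} (X(s))_{s\ge 0}$ under $\Pb_{\xbf}$.

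The main technical obstacle, which is what \cite{ACGZ} clarified beyond the original argument of \cite{Kiu} and \cite{Lam}, is the careful bookkeeping of filtrations, stopping times and lifetimes: one has to check that the natural filtration of $(\xi,\Theta)$ is the time-change of the filtration of $X$, that $\varsigma$ corresponds to $A(I_\varsigma)$, and that the polar decomposition and the cemetery conventions remain consistent at the moment $X$ hits $0$ (or is absorbed). These measurability and boundary issues, rather than the algebraic computation above, are the delicate part of the proof.
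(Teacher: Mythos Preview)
Your sketch is a correct outline of the classical Lamperti--Kiu argument. Note, however, that the paper does not actually prove this proposition: it is stated as a known result and attributed to \cite{Kiu} with clarifications from \cite{ACGZ}, building on \cite{Lam}. So there is no proof in the paper to compare against; your proposal simply supplies the standard argument that the paper chose to cite rather than reproduce.
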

\noindent In the previous statement we implicitly took the convention that $0\times \dagger = 0$. The integral $\zeta = I_{\varsigma}$ is the lifetime of $X$ until it eventually hits $0$, which acts as an absorbing state.

The analysis of MAPs with uncountable state space is much more intricate than the countable case. One way to capture their properties is using the compensation formula. It was proved in \cite{Cin} that any MAP $(\xi,\Theta)$ on $\R\times \Sb^{d-1}$ is associated with a so-called \emph{Lévy system} $(H,L)$, made up of an increasing additive functional $t\mapsto H_t$ of $\Theta$ and a transition kernel $L$ from $\Sb^{d-1}$ to $\R^*\times \Sb^{d-1}$, with $\R^*=\R\setminus\{0\}$, such that, for all $\theta\in\Sb^{d-1}$,
\[
\int_{\R^*} (1 \wedge |x|^2) \, L_{\theta}(\mathrm{d}x \times \{\theta\}) <\infty.
\]
More importantly, this Lévy system satisfies the following \emph{compensation formula} for all bounded measurable $F:(0,\infty)\times \R^2\times \Sb^{d-1}\times \Sb^{d-1} \rightarrow \R$, and all $(x,\theta)\in \R\times \Sb^{d-1}$,
\begin{multline} \label{eq: compensation MAP}
\mathtt{E}_{x,\theta}\left[\sum_{s>0} F(s,\xi(s^-),\Delta\xi(s),\Theta(s^-),\Theta(s))\mathds{1}_{\{\xi(s^-)\ne\xi(s) \, \text{or}\, \Theta(s^-)\ne\Theta(s)\}} \right]  \\
= \mathtt{E}_{x,\theta} \left[ \int_0^{\infty} \mathrm{d}H_s \int_{\R^*\times \Sb^{d-1}} L_{\Theta(s)}(\mathrm{d}x,\mathrm{d}\Phi) F(s,\xi(s),x,\Theta(s),\Phi) \right].
\end{multline}
For the remainder of the paper we restrict ourselves to the usual setting $H_t = t$. Because of the bijection in Proposition \ref{prop: Lamperti R^d}, this naturally puts us in a restricted class of self-similar Markov processes through the underlying driving MAP. Observe how \eqref{eq: compensation MAP} compares with the compensation formula for Lévy processes: $L$ essentially plays the role of a Lévy measure, albeit now depending on the current angle from which the process jumps.

A nice subclass of MAPs is provided by \emph{isotropic} self-similar Markov processes, and we shall mainly restrict ourselves to this setting. We say that a self-similar Markov process $X$ is isotropic if, for all isometry $U$, and all $\xbf\in\R^d\setminus\{0\}$, the law of $(U\cdot X(t), \Pb_{\xbf})$ is $\Pb_{U\cdot \xbf}$. Equivalently, this means \cite[Theorem 11.14]{KP} that for all $(x,\theta)\in\R\times \Sb^{d-1}$, the law of $((\xi,U\cdot \Theta), \mathtt{P}_{x,\theta})$ is $\mathtt{P}_{x, U\cdot \theta}$. The key advantage of restricting to isotropic processes is the following proposition, which is \cite[Corollary 11.15]{KP}.
\begin{Prop} \label{prop:isotropy Levy}
If $X$ is an isotropic self-similar Markov process, then the underlying ordinate $\xi$ is a Lévy process.
\end{Prop}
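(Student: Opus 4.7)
The plan is to combine two ingredients: the defining Markov additive property, which already forces the post--$t$ increments of $\xi$ to depend on the past only through the current angle $\Theta(t)$, together with the isotropy assumption, which washes out this residual angular dependence. The transitivity of the orthogonal group $O(d)$ on $\Sb^{d-1}$ is what makes the second step work.

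More precisely, I would first unwind what isotropy gives on the MAP side. By the reformulation of isotropy recalled just before the statement, for every isometry $U\in O(d)$ and every $(x,\theta)\in\R\times\Sb^{d-1}$, the law of $((\xi,U\cdot\Theta),\mathtt{P}_{x,\theta})$ coincides with $\mathtt{P}_{x,U\cdot\theta}$. Projecting onto the first coordinate, the marginal law of $\xi$ under $\mathtt{P}_{x,\theta}$ is invariant under $\theta\mapsto U\cdot\theta$. Since $O(d)$ acts transitively on $\Sb^{d-1}$, for any two $\theta,\theta'\in\Sb^{d-1}$ there is an isometry mapping one to the other, so the marginal law of $\xi$ under $\mathtt{P}_{0,\theta}$ does not depend on $\theta$; call this common law $\Pb^{\xi}$.

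Next, I would plug this back into the MAP defining identity: for any bounded measurable $f:\R\to\R$, any $s,t\ge 0$ and any starting point $(x,\theta)$,
\[
\mathtt{E}_{x,\theta}\Big[f(\xi(t+s)-\xi(t))\mathds{1}_{\{t+s<\varsigma\}}\,\Big|\,\Gcal_t\Big] = \mathds{1}_{\{t<\varsigma\}}\,\mathtt{E}_{0,\Theta(t)}\Big[f(\xi(s))\mathds{1}_{\{s<\varsigma\}}\Big].
\]
By the previous paragraph, the right-hand side is, on $\{t<\varsigma\}$, a deterministic function of $s$ and $f$ only, namely $\Eb^{\xi}[f(\xi(s))\mathds{1}_{\{s<\varsigma\}}]$. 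This simultaneously establishes stationarity of the increments of $\xi$ and their independence from $\Gcal_t$. Combined with the càdlàg regularity of $\xi$ that is inherited from the Feller property of $(\xi,\Theta)$, this is precisely the characterisation of a (possibly killed) Lévy process, which proves the proposition.

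The only real subtlety lies in the handling of the cemetery state $\dagger$ and the lifetime $\varsigma$: one must check that the killing is consistent with a Lévy structure, i.e.~that the instantaneous rate at which $\Theta$ enters $\dagger$ cannot depend on the current angle. This however is again immediate from isotropy and the transitivity of $O(d)$ on $\Sb^{d-1}$ applied to the stopping time $\varsigma$, so no genuine obstacle arises; apart from this bookkeeping, the argument is a one-line consequence of isotropy plus the MAP identity.
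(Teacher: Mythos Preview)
Your argument is correct. You exploit isotropy directly at the level of the MAP: transitivity of $O(d)$ on $\Sb^{d-1}$ forces the path law of $\xi$ under $\mathtt{P}_{0,\theta}$ to be independent of $\theta$, and then the MAP identity immediately upgrades to stationary increments independent of $\Gcal_t$. Together with c\`adl\`ag paths and the observation that the killing rate cannot depend on the angle either, this is exactly the (possibly killed) L\'evy property.

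The paper takes a different, more geometric route. It observes that isotropy makes $|X|$ itself a Markov process, hence a \emph{positive} self-similar Markov process, to which the classical Lamperti representation applies and produces a L\'evy process $\eta$ with $|X(t)|=\mathrm{e}^{\eta(\psi(t))}$. Since the Lamperti--Kiu representation already gives $|X(t)|=\mathrm{e}^{\xi(\varphi(t))}$ with the same time change (both are governed by $\int_0^{\cdot} |X|^{-\alpha}$), one identifies $\xi=\eta$. Your approach is arguably more elementary in that it stays entirely within the MAP framework and avoids invoking a second Lamperti-type theorem; the paper's approach, on the other hand, makes transparent the conceptual reason behind the result, namely that the radial part decouples as a one-dimensional self-similar process.
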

\noindent Let us briefly mention that the proof of \cref{prop:isotropy Levy} relies on the fact that by isotropy, $|X|$ is a positive self-similar Markov process, for which we can apply the classical Lamperti theory. This result opens the way to many useful Lévy tools, such as the Lévy-Itô description of $\xi$, the compensation or exponential formulas, or the existence of an exponential martingale and the corresponding change of measures. We will make heavy use of these additional properties when describing growth-fragmentations driven by isotropic processes in \cref{sec: spatial GF}. Note that this notion of isotropy in particular covers the $\alpha$--stable isotropic Lévy case \cite[Theorem 3.13]{Kyp}, for which the Lévy system is given by $H_t=t$ and 
\[
L_{\theta}(\mathrm{d}x,\mathrm{d}\Phi) =  \frac{c(\alpha)\mathrm{e}^{dx}}{|\mathrm{e}^x\Phi-\theta|^{\alpha+d}} \mathrm{d}x\sigma_{d-1}(\mathrm{d}\Phi),
\]
where $c(\alpha) = 2^{\alpha-1} \pi^{-d} \frac{\Gamma((d+\alpha)/2)\Gamma(d/2)}{|\Gamma(-\alpha/2)|}$, and $\sigma_{d-1}(\mathrm{d}\Phi)$ is the surface measure on the sphere $\Sb^{d-1}$. See also \cite{BW} for the planar case. Numerous applications of Lévy systems can be found in \cite{KRS,KRSY} to name but a few.

\section{Spatial isotropic growth-fragmentation processes} \label{sec: spatial GF}
Now, we extend the framework of \cite{Ber-GF} and \cite{DP23} to isotropic $\R^d$--valued Markov processes for $d\ge 2$. We exclude the case $d=1$, since it can be deduced from the construction \cite{DP23} by considering a symmetric self-similar Markov process or  from  \cite{DS}. It is important to note that the construction  in \cite{DP23} does not consider the isotropy assumption as well as in \cite{DS}. 

In what follows $X$ will be an isotropic $\R^d$--valued self-similar Markov process with index $\alpha$, as defined in the last paragraph of \cref{sec: ssmp}, which under $\Pb_{\xbf}$, $\xbf\in \R^{d}\setminus \{0\}$, starts from $\xbf$. For technical reasons, we shall assume that $X$ is either absorbed after time $\zeta$ at some cemetery state $\partial$, or that $X$ converges to $0$ at infinity, for all starting points. We also recall that $(\xi,\Theta)$ denotes  the MAP associated with $X$.

\subsection{Construction of spatial growth-fragmentation processes}\label{construction}
We now construct a cell system whose building block is the isotropic self-similar Markov process  $X$. The construction of the cell system in this case is similar to  \cite{Ber-GF} and simpler than the multitype case in  \cite{DP23}. It is important to note that  actually the construction holds without the self-similarity or isotropy assumptions. This cell system will start from a single particle whose size  is given by the process $X$, that will split in a binary way whenever $X$ has a jump.
 Let $\Delta X(t) := X(t)-X(t^-)$, for $t\ge 0$,  denotes the possible jump of $X$ at time $t$. At any jump time $t$ of $X$, one places a new particle in the system and, conditionally given their size $-\Delta X(t)$ at birth, each of these newborn particles evolves independently as $\Pb_{-\Delta X(t)}$. Then, one repeats this construction for any such child, thus creating the second generation, and so on.

We may now construct the cell system associated with $X$ and indexed by the tree $\Ub:=\bigcup_{i\ge 0} \N^i$, with $\N=\{1,2,\ldots\}$ and $\N^0:=\{\varnothing\}$ is the label of the \emph{Eve cell}. For $u:=(u_1,\ldots,u_i)\in \Ub$, we denote by $|u|=i$, the \emph{generation} of $u$. In this tree, the offspring of $u$ will be labelled by the lists $(u_1,\ldots,u_i,k)$, with $k\in \N$. 
 Let $b_{\varnothing}=0$ and $\Xcal_{\varnothing}$ be distributed as $X$ started from some vector $\xbf\in\R^d\setminus\{0\}$. At each jump of $\Xcal_{\varnothing}$, place a new particle with initial size given by minus the jump size (so that there is conservation at splitting events). Since $X$ converges at infinity, it is possible to rank the sequence of jump sizes and times $(\xbf_1,\beta_1),(\xbf_2,\beta_2),\ldots$ of $-\Xcal_{\varnothing}$ by descending lexicographical order for the norm of the $x_i$'s. Given this sequence of jumps, we define the first generation $\Xcal_i, i\in \N,$ of our cell system as independent processes with respective law $\Pb_{\xbf_i}$, and we set $b_i = b_{\varnothing}+\beta_i$ for the \emph{birth time} of $i$ and $\zeta_i$ for its lifetime. The law of the $n$-th generation is constructed likewise given generations $1,\ldots, (n-1)$. A cell $u'=(u_1,\ldots,u_{n-1})\in \N^{n-1}$ gives birth to the cell $u=(u_1,\ldots,u_{n-1},i)$, with lifetime $\zeta_u$, at time $b_u = b_{u'}+\beta_{i}$ where $\beta_{i}$ is the $i$-th jump of $\Xcal_{u'}$ (with respect to the previous ranking). Moreover, conditionally on the jump sizes and times of $\Xcal_{u'}$, $\Xcal_u$ has law $\Pb_{\mathbf{y}}$ with $-\mathbf{y}=\Delta \Xcal_{u'}(\beta_i)$ and is independent of the other daughter cells at generation $n$.

In this construction, the cells are not labelled chronologically. However, it still uniquely defines the law $\Pcal_{\xbf}$ of the cell system $(\Xcal_u(t), u\in\Ub, t\ge 0)$ started from $\xbf$. Finally, we introduce the \emph{(spatial) growth-fragmentation process}
\[
\Xbf(t) := \left\{\left\{ \Xcal_u(t-b_u), \; u\in\Ub \; \text{and} \; b_u\le t<b_u+\zeta_u \right\}\right\}, \quad t\ge 0,
\]
describing the collection of cells alive at time $t\ge 0$ (the double brackets here denote multisets). We define $\Pbf_{\xbf}$ to be the law of the growth-fragmentation $\Xbf$ started at $\xbf$.

We point out  that one can view this construction as a \emph{multitype growth-fragmentation} process, where the types correspond to the directions (in the $d=1$ case, it is the sign). The set of types is therefore the sphere $\Sb^{d-1}$, which is uncountable, so that the construction does not quite fall into the framework developed in \cite{DP23}. From this standpoint, note that the type corresponding to the daughter cell created by the jump $\Delta X(t)$ is, up to time-change,
\[
\Theta_{\Delta}(t) := \frac{\Theta(t^-)-\mathrm{e}^{\Delta \xi(t)}\Theta(t)}{|\Theta(t^-)-\mathrm{e}^{\Delta \xi(t)}\Theta(t)|}.
\]
Next,  let 
\[
\overline{\Xbf}(t) := \{\{(\Xcal_u(t-b_u),|u|), \; u\in\Ub \; \text{and} \; b_u\le t < b_u+\zeta_u\}\}, \quad t\ge 0.
\]
We shall denote by $(\Fcal_t, t\ge 0)$ the natural filtration associated with $\Xbf$, and $(\overline{\Fcal}_t, t\ge 0)$ the one associated with $\overline{\Xbf}$. Under the existence  of an excessive function   for $\Xbf$, 
that is that there exist a measurable function $f:\mathbb{R}^d\to [0,\infty)$
is called excessive for $\Xbf$ if 
\[
\mathbb{E}_x\left[\sum_{z\in \Xbf(t) }f(z)\right]\le f(x),
\]
for all $t\ge 0$ and $x\in \mathbb{R}\setminus \{0\}$. If such an excessive function exists one can rank the elements $X_1(t), X_2(t), \ldots$ of $\Xbf(t)$ by descending order of their norm for any fixed $t$. Under the same assumption, we have the following.
\begin{Prop} \label{prop: temporal branching spatial GF}
Assume that $\Xbf$ has an excessive function. Then for any $t\ge 0$, conditionally on $\overline{\Xbf}(t)=\{\{(\xbf_i,n_i)\}\}$, the process $(\overline{\Xbf}(t+s), s\ge 0)$ is independent of $\overline{\Fcal}_t$ and distributed as 
\[
\bigsqcup_{i\ge 1} \overline{\Xbf}_i(s) \circ \theta_{n_i},
\]
where the $\Xbf_i, i\ge 1,$ are independent processes distributed as $\overline{\Xbf}$ under $\Pcal_{\xbf_i}$, $\theta_n$ is the shift operator $\{\{(\mathbf{z}_i, k_i), i\ge 1\}\}\circ \theta_n := \{\{(\mathbf{z}_i, k_i+n), i\ge 1\}\}$, and $\sqcup$ denotes union of multisets.
\end{Prop}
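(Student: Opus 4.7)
The plan is to reduce the statement to two ingredients: the Markov property of the underlying self-similar process $X$ applied to each cell, and the inductive independence that is already built into the construction of $\Pcal_\xbf$. Fix $t\ge 0$; the excessive function assumption guarantees that the cells alive at time $t$ form a countable family, which I can enumerate measurably as $(u_i)_{i\ge 1}$ by ranking the sizes $\xbf_i:=\Xcal_{u_i}(t-b_{u_i})$ in decreasing order of their norm, while recording the generations $n_i:=|u_i|$. For each such cell, I would apply the Markov property of $\Xcal_{u_i}$ at the deterministic time $t-b_{u_i}$: conditionally on $\xbf_i$, the residual trajectory $(\Xcal_{u_i}(t-b_{u_i}+s),s\ge 0)$ is independent of the pre-$t$ history of $\Xcal_{u_i}$ and distributed as $X$ under $\Pb_{\xbf_i}$.

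Next I would unpack what this implies for the descendants. By the recursive definition of $\Pcal_\xbf$, the sub-cell-system rooted at $u_i$ is obtained by attaching, at each jump of $\Xcal_{u_i}$, an independent copy of the cell system started from minus the jump size, and iterating. Splitting the jumps of $\Xcal_{u_i}$ into those occurring before and after time $t-b_{u_i}$, the Markov property above shows that the part of the sub-cell-system rooted at $u_i$ which lives after time $t$ is constructed afresh from the point $\xbf_i$, independently of everything prior to $t$. Shifting the generation labels by $n_i$ to account for the position of $u_i$ in the global tree, this gives that the contribution of $u_i$ to $(\overline{\Xbf}(t+s),s\ge 0)$ is distributed as $\overline{\Xbf}_i(s)\circ \theta_{n_i}$, where $\overline{\Xbf}_i$ has law $\Pcal_{\xbf_i}$. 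Summing these contributions as multisets over $i\ge 1$ yields exactly the decomposition $\bigsqcup_{i\ge 1} \overline{\Xbf}_i(s)\circ\theta_{n_i}$ claimed in the proposition.

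To conclude, I would verify the independence across different indices $i$. This is essentially automatic from the construction: the daughter cells emitted at distinct jump times of a given parent are defined as conditionally independent given the jump data, and parents belonging to incomparable lines of descent are themselves independent. Combining these with the application of the Markov property at time $t-b_{u_i}$ for each $i$, and with the independence of $\overline{\Fcal}_t$ from the residual trajectories, delivers the full independence statement.

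The main obstacle is measurability and bookkeeping rather than any probabilistic subtlety: one must make sure that the enumeration $(u_i)_{i\ge 1}$, the partition of jumps of each $\Xcal_u$ into "before $t-b_u$" and "after $t-b_u$", and the simultaneous application of the Markov property to countably many cells, can all be implemented in a jointly measurable way. This is precisely where the excessive function hypothesis is used, since it ensures that $\Xbf(t)$ is a countable multiset admitting a measurable ranking by norm. Once this skeleton is in place, the actual probabilistic content reduces to the ordinary Markov property of $X$ at deterministic times and to the inductive independence of daughter cells, both of which come directly from the construction in \cref{construction}.
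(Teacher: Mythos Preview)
Your proposal is correct and follows essentially the same approach as the paper, which in fact omits the proof entirely and simply refers to Proposition~2 in \cite{Ber-GF}; the argument there is precisely the one you sketch, combining the Markov property of the driving process applied to each living cell with the conditional independence of sub-cell-systems built into the construction, the excessive function serving only to guarantee a measurable ranking of the atoms.
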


We omit the proof  of the precedent result since it follows from the same arguments as in Proposition 2 in \cite{Ber-GF}. 

\subsection{The isotropic cumulant function and genealogical martingales} \label{sec: cumulant}
We are first of all interested in pointing out martingales as in the monotype \cite{Ber-GF} and multitype \cite{DP23} setting  in the spatial growth-fragmentation case. It turns out that the exponents $\omega$ corresponding to these martingales will be found as the roots of an \emph{isotropic cumulant function} which generalises the cumulant function $\kappa$ in \cite{Ber-GF,BBCK}. Recall that, as readily seen from the rotational invariance property, the radial part of $X$ is a positive self-similar Markov process, so that the ordinate $\xi$ is in fact a Lévy process. We will extensively make use of this argument and its consequences.

Let us start with a simple but typical calculation: for $q\ge 0$ and $\theta\in\Sb^{d-1}$, we aim at computing the quantity $\Eb_{\theta}\Big[ \sum_{0<t<\zeta} |\Delta X(t)|^{q}\Big]$ in terms of the MAP characteristics of $X$.  We will now consider the Lévy system $(L,H)$ of $(\xi,\Theta)$ (see \cref{sec: ssmp}), and we take as usual $H_t=t$ to avoid notational clutter. Since we want to sum over all $t$'s, we can omit the Lamperti-Kiu time-change between $X$ and $(\xi,\Theta)$, so that
\[
\Eb_{\theta}\Bigg[ \sum_{0<t<\zeta} |\Delta X(t)|^{q}\Bigg]
=
\mathtt{E}_{0,\theta}\Bigg[ \sum_{0<t<\varsigma} \mathrm{e}^{q\xi(t^-)}|\Theta(t^-)-\mathrm{e}^{\Delta \xi(t)}\Theta(t)|^{q}\Bigg].
\]
The compensation formula \eqref{eq: compensation MAP} for Markov additive processes then yields
\begin{equation} \label{eq:compensation cumulant}
\Eb_{\theta}\Bigg[ \sum_{0<t<\zeta} |\Delta X(t)|^{q}\Bigg]
=
\mathtt{E}_{0,\theta}\left[ \int_{0}^{\infty} \mathrm{d}t \, \mathrm{e}^{q\xi(t)} \int_{\R^*\times \Sb^{d-1}} L_{\Theta(t)}(\mathrm{d}x, \mathrm{d}\Phi)|\Theta(t)-\mathrm{e}^{x}\Phi|^{q}\right].
\end{equation}
Remark that the integral
\[
\int_{\R^*\times \Sb^{d-1}} L_{\theta}(\mathrm{d}x, \mathrm{d}\Phi)|\theta-\mathrm{e}^{x}\Phi|^{q},
\]
does not depend on the angle $\theta$, since isotropy entails that if $\theta,\theta'\in\Sb^{d-1}$, and $U$ is an isometry mapping $\theta$ to $\theta'$, then $L_{\theta'}(\mathrm{d}x, \mathrm{d}\Phi) = L_{\theta}(\mathrm{d}x,U^{-1}\mathrm{d}\Phi)$. More generally, the image measures $\Ltilde_{\theta}(\mathrm{d}y,\mathrm{d}\phi)$ of $L_{\theta}(\mathrm{d}x,\mathrm{d}\Phi)$ through the mapping $(y,\phi) = (\log |\theta - \mathrm{e}^x\Phi|, \frac{\theta - \mathrm{e}^x\Phi}{|\theta - \mathrm{e}^x\Phi|})$ satisfy the same relationship. Indeed, for any nonnegative measurable function $F$, since $|U|=1$, we have
\begin{equation}\label{eq: Ltilde isotropy}
\begin{split}
\int_{\R^*\times \Sb^{d-1}} L_{\theta'}(\mathrm{d}x, \mathrm{d}\Phi)&F\left(\log|\theta'-\mathrm{e}^{x}\Phi|, \frac{\theta'-\mathrm{e}^{x}\Phi}{|\theta'-\mathrm{e}^{x}\Phi|}\right) \\
&=
\int_{\R^*\times \Sb^{d-1}} L_{\theta}(\mathrm{d}x, U^{-1}\mathrm{d}\Phi)F\left(\log|U\theta-\mathrm{e}^{x}\Phi|, \frac{U\theta-\mathrm{e}^{x}\Phi}{|U\theta-\mathrm{e}^{x}\Phi|}\right)   \\
&=
\int_{\R^*\times \Sb^{d-1}} L_{\theta}(\mathrm{d}x, \mathrm{d}\varphi)F\left(\log|U\theta-\mathrm{e}^{x}U\varphi|, \frac{U\theta-\mathrm{e}^{x}U\varphi}{|U\theta-\mathrm{e}^{x}U\varphi|}\right) \\
&=
\int_{\R^*\times \Sb^{d-1}} L_{\theta}(\mathrm{d}x, \mathrm{d}\varphi)F\left(\log|\theta-\mathrm{e}^{x}\varphi|, U \frac{\theta-\mathrm{e}^{x}\varphi}{|\theta-\mathrm{e}^{x}\varphi|}\right). 
\end{split}
\end{equation}
Hence $L_{\theta'}(\mathrm{d}x, \mathrm{d}\Phi) = L_{\theta}(\mathrm{d}x,U^{-1}\mathrm{d}\Phi)$. Singling out the image measure  $\Ltilde(\mathrm{d}y,\mathrm{d}\phi)$ of $L_{\theta}(\mathrm{d}x,\mathrm{d}\Phi)$ when $\theta=(1,0,\ldots,0)$ say, \eqref{eq:compensation cumulant} boils down to 
\[
\Eb_{\theta}\Bigg[ \sum_{0<t<\zeta} |\Delta X(t)|^{q}\Bigg]
=
\mathtt{E}_{0,\theta}\left[ \int_{0}^{\infty} \mathrm{d}t \, \mathrm{e}^{q\xi(t)}\right] \int_{\R^*\times \Sb^{d-1}} \Ltilde(\mathrm{d}y, \mathrm{d}\phi)\mathrm{e}^{qy}.
\]
Recall that  $\xi$ is a possibly killed Lévy process and assume that its Laplace exponent $\psi$ satisfies $\psi(q)<0$, otherwise the first integral blows up. Then we are left with
\[
\Eb_{\theta}\Bigg[ \sum_{0<t<\zeta} |\Delta X(t)|^{q}\Bigg]
=
1-\frac{\kappa(q)}{\psi(q)},
\]
where we have set
\begin{equation} \label{eq:kappa}
\kappa(q) = \psi(q) + \int_{\R^*\times \Sb^{d-1}} \Ltilde(\mathrm{d}y, \mathrm{d}\phi)\mathrm{e}^{qy}.
\end{equation}
\added{We stress once more that $\kappa$ can be calculated using any of the measures $\Ltilde_{\theta}$ in place of $\Ltilde$.} The previous calculations finally show that 
\begin{equation} \label{eq:sum kappa}
\Eb_{\theta}\Bigg[ \sum_{0<t<\zeta} |\Delta X(t)|^{q}\Bigg]
=
\begin{cases}
1-\displaystyle\frac{\kappa(q)}{\psi(q)} & \text{if} \; \kappa(q)<\infty \; \text{and} \; \psi(q)<0, \\[2mm]
+\infty & \text{otherwise}.
\end{cases}
\end{equation}
We call the function $\kappa$ the \emph{isotropic cumulant function}. Its roots will lead to martingales for the growth-fragmentation cell system through the identity \eqref{eq:sum kappa}. Thus, throughout the paper we make the following assumption
\begin{center}
\textbf{(H)} \quad \emph{There exists $\omega\ge 0$ such that $\kappa(\omega)=0$.}  
\end{center}
Notice that, as readily seen from \eqref{eq:kappa}, $\kappa$ is a convex function, so that there exist at most two such roots. For such a root $\omega$, we obtain by self-similarity and \eqref{eq:sum kappa} that for all $\xbf\in\R^d\setminus\{0\}$,
\begin{equation}\label{eq: omega generalised.0}
\Eb_{\xbf}\Bigg[ \sum_{0<t<\zeta} |\Delta X(t)|^{\omega}\Bigg]
=
|\xbf|^{\omega}.
 \end{equation}
Following the strategy of Section 3.2 in \cite{DS}, we now show that the roots of $\kappa$ pave the way for remarkable martingales.  The proof of the following result follows exactly from the same arguments as those used  in Proposition 3.6 in \cite{DS}. 
\begin{Prop}
Under $\Pb_{\xbf}$, for all $\xbf\in \R^d\setminus \{0\}$, the process 
\[
M(t) := |X(t)|^{\omega} + \sum_{0<s\le t\wedge \zeta}  |\Delta X(s)|^{\omega},
\]
is a martingale for the filtration $(F_t^X, t\ge 0)$ associated with $X$.
\end{Prop}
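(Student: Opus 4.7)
The strategy is to derive the martingale property directly from the Markov property of $X$ combined with identity \eqref{eq: omega generalised.0}. Fix $0\le s\le t$; splitting the jump sum at time $s$ and applying the Markov property of $X$ at $s$, one finds that $\Eb_{\xbf}[M(t)\mid F_s^X] = M(s) - |X(s)|^{\omega} + g(X(s), t-s)$, where we set
\[
g(\mathbf{y},r) := \Eb_{\mathbf{y}}\Bigl[|X(r)|^{\omega} + \sum_{0<u\le r\wedge \zeta}|\Delta X(u)|^{\omega}\Bigr].
\]
The whole problem therefore reduces to showing the identity $g(\mathbf{y},r) = |\mathbf{y}|^{\omega}$ for every $r\ge 0$ and every $\mathbf{y}\in\R^d\setminus\{0\}$.

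To obtain this identity I would apply the Markov property of $X$ once more, this time at the deterministic time $r$, to the tail of the full jump sum:
\[
\Eb_{\mathbf{y}}\Bigl[\sum_{r<u<\zeta}|\Delta X(u)|^{\omega}\,\Bigm|\,F_r^X\Bigr] = \Eb_{X(r)}\Bigl[\sum_{0<u<\zeta}|\Delta X(u)|^{\omega}\Bigr] = |X(r)|^{\omega},
\]
where the last equality invokes \eqref{eq: omega generalised.0} at the random starting point $X(r)$, with the convention $|X(r)|=0$ on $\{r\ge \zeta\}$. Taking expectations and adding the contribution of the jumps on $(0,r\wedge\zeta]$ reconstitutes the full jump sum $\sum_{0<u<\zeta}|\Delta X(u)|^{\omega}$, whose expectation equals $|\mathbf{y}|^{\omega}$ by a further application of \eqref{eq: omega generalised.0}. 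This delivers $g(\mathbf{y},r)=|\mathbf{y}|^{\omega}$ and hence the martingale identity.

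The two points that demand care are (i) the exchange between the conditional expectation and the infinite sum of jumps, which is justified by monotone convergence since all summands are non-negative; and (ii) the integrability of $M(t)$, which follows from \eqref{eq:sum kappa} together with the standing condition $\psi(\omega)<0$ guaranteeing $\Eb_{\xbf}[\sum_{0<u<\zeta}|\Delta X(u)|^{\omega}]<\infty$. The edge case $\{r\ge \zeta\}$ is harmless under the standing assumption that $X$ is either absorbed at $\partial$ or converges to $0$. The main conceptual obstacle is thus purely bookkeeping: the core identity is a clean double application of the Markov property to \eqref{eq: omega generalised.0}, and the full argument is essentially a line-by-line transcription of the proof of Proposition~3.6 in \cite{DS}.
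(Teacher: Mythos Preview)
Your argument is correct and is essentially the same approach the paper takes: the paper defers the proof to Proposition~3.6 in \cite{DS}, and what you have written is precisely that argument, namely a double application of the Markov property combined with \eqref{eq: omega generalised.0}. The integrability of $M(t)$ is in fact delivered by the computation itself (since all terms are nonnegative and $g(\mathbf{y},r)=|\mathbf{y}|^{\omega}<\infty$), so no separate appeal to $\psi(\omega)<0$ is needed.
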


\medskip
\noindent Moreover, the definition of $\omega$ and the branching structure of growth-fragmentation processes entail the existence of the following genealogical martingale, which will be crucial for the spine decomposition. Let 
$\Gscr_n := \sigma\left(\Xcal_u, |u|\le n \right), n\ge 0$. 
\begin{Thm} \label{thm:M(n)}
The process
\[
\Mcal(n):= \sum_{|u|=n+1} |\Xcal_u(0)|^{\omega}, \quad n\ge 0,
\]
is a $(\Gscr_n, n\ge 0)$--martingale under $\Pcal_{\xbf}$ for all $\xbf\in \R^d\setminus\{0\}$.
\end{Thm}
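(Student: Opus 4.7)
The plan is to exploit the branching structure of the cell system together with the key identity \eqref{eq: omega generalised.0} to get the martingale property essentially by a one-step conditional expectation computation.

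First I would observe that $\Mcal(n)$ is $\Gscr_n$--measurable. Indeed, each cell $u$ with $|u|=n+1$ is a daughter of a unique parent $u'$ with $|u'|=n$, born at one of the jump times of $\Xcal_{u'}$, and its initial size is exactly $\Xcal_u(0)=-\Delta\Xcal_{u'}(b_u-b_{u'})$. Since the full trajectory of $\Xcal_{u'}$ belongs to $\Gscr_n$, so do all the initial sizes $|\Xcal_u(0)|$ for $|u|=n+1$. In particular $\Mcal(n)$ is a nonnegative, $\Gscr_n$--measurable random variable.

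Next I would compute $\Eb[\Mcal(n+1)\mid \Gscr_n]$ by regrouping the cells of generation $n+2$ according to their parents of generation $n+1$:
\[
\Mcal(n+1) = \sum_{|u'|=n+1} \sum_{0<t<\zeta_{u'}} |\Delta \Xcal_{u'}(t)|^{\omega}.
\]
By the inductive construction of the cell system in Section \ref{construction}, conditionally on $\Gscr_n$ the processes $(\Xcal_{u'})_{|u'|=n+1}$ are independent, each $\Xcal_{u'}$ being distributed as $X$ under $\Pb_{\Xcal_{u'}(0)}$, where the starting points are $\Gscr_n$--measurable. Since the summands are nonnegative we may swap the sum and the conditional expectation by Tonelli, so that
\[
\Eb\bigl[\Mcal(n+1)\mid \Gscr_n\bigr]
= \sum_{|u'|=n+1} \Eb_{\Xcal_{u'}(0)}\Bigg[\sum_{0<t<\zeta}|\Delta X(t)|^{\omega}\Bigg].
\]
Applying \eqref{eq: omega generalised.0} cell by cell gives $\Eb_{\Xcal_{u'}(0)}\bigl[\sum_t |\Delta X(t)|^{\omega}\bigr] = |\Xcal_{u'}(0)|^{\omega}$, whence $\Eb[\Mcal(n+1)\mid \Gscr_n] = \Mcal(n)$.

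Finally I would address integrability by induction: $\Eb_{\xbf}[\Mcal(0)] = \Eb_{\xbf}\bigl[\sum_t |\Delta X(t)|^{\omega}\bigr] = |\xbf|^{\omega}<\infty$ by \eqref{eq: omega generalised.0}, and if $\Eb_{\xbf}[\Mcal(n)]=|\xbf|^{\omega}$, the previous display together with the tower property yields $\Eb_{\xbf}[\Mcal(n+1)] = \Eb_{\xbf}[\Mcal(n)] = |\xbf|^{\omega}$, so all $\Mcal(n)$ are integrable and the martingale property is established. The only mildly delicate point is the conditional independence and distributional identification of the generation-$(n+1)$ cells given $\Gscr_n$; this is built into the construction of the cell system in Section \ref{construction}, so no further work is needed there.
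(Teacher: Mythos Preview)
Your proof is correct and follows the standard route: measurability of $\Mcal(n)$ with respect to $\Gscr_n$, the one-step computation via the branching construction and identity \eqref{eq: omega generalised.0}, and integrability by induction. The paper itself does not spell out a proof but defers to \cite[Theorem~3.5]{DS}, whose argument is exactly the one you give here.
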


The arguments used to deduce the previous result are the same as those presented in Theorem 3.5 in \cite{DS}.

\subsection{A change of measures}
We introduce a new probability measure $\Phat_{\xbf}$ for $\xbf\in \R^d \setminus\{ 0\}$ using the martingale $(\Mcal(n))_{n\ge 0}$ in \cref{thm:M(n)}. This is the analogue of \cite[Section 4.1]{BBCK} in the positive case or \cite[Section 3.3]{DS} in the $d=1$ case. It describes the law of a new cell system $(\Xcal_u)_{u\in\Ub}$ together with an infinite distinguished \emph{ray}, or \emph{leaf}, $\Lcal \in \partial \Ub = \N^{\N}$. On $\mathscr{G}_n$, for $n\ge 0$, it has Radon-Nikodym derivative with respect to $\Pcal_{\xbf}$ given by $\Mcal(n)$, normalized to be a probability measure, \emph{i.e.} for all $G_n \in \mathscr{G}_n$,
\[
\Phat_{\xbf}(G_n) := |\xbf|^{-\omega} \Ecal_{\xbf}\left[\Mcal(n) \mathds{1}_{G_n} \right].
\]
The law of the particular leaf $\Lcal$ under $\Phat_z$ is chosen so that, for all $n\ge 0$ and all $u\in \Ub$ such that $|u|=n+1$
\begin{equation} \label{eq: generation spine}
\Phat_{\xbf} \left( \Lcal(n+1)=u \,\big|\, \mathscr{G}_n\right) := \frac{|\Xcal_u(0)|^{\omega}}{\Mcal(n)},
\end{equation}
where for any $\ell\in\partial\Ub$, $\ell(n)$ denotes the ancestor of $\ell$ at generation $n$. In words, to define the next generation of the spine, we select one of its jumps proportionally to its size to the power $\omega$ (the spine at generation $0$ being the Eve cell). By an application of the Kolmogorov extension theorem, the martingale property and the branching structure of the system ensure that these definitions are compatible, and therefore this uniquely defines the probability measure $\Phat_{\xbf}$.

We will be interested in the evolution of the \emph{tagged cell}, which is the cell associated with the distinguished leaf $\Lcal$. More precisely, set $b_{\ell} = \lim\uparrow b_{\ell(n)}$ for any leaf $\ell\in\partial \Ub$. Then, define $\Xhat$ by $\Xhat(t):=\partial$ if $t\ge b_{\Lcal}$ and 
\begin{equation} \label{eq:tagged cell}
\Xhat(t):= \Xcal_{\Lcal(n_t)}(t-b_{\Lcal(n_t)}), \quad t<b_{\Lcal},
\end{equation}
where $n_t$ is the unique integer $n$ such that $b_{\Lcal(n)}\le t < b_{\Lcal(n+1)}$.

By construction of $\Phat_{\xbf}$, we have the following genealogical \emph{many-to-one} formula: for all nonnegative measurable function $f$ and all $\Gscr_n$--measurable nonnegative random variable $B_n$, 
\[
|\xbf|^{\omega} \Ehat_{\xbf}\left[f(\Xcal_{\Lcal(n+1)}(0))B_n\right]
=
\Ecal_{\xbf}\Bigg[ \sum_{|u|=n+1} |\Xcal_u(0)|^{\omega}f(\Xcal_u(0))B_n\Bigg].
\]
This may be extended to a temporal many-to-one formula. The existence of $(v,\omega)$ ensures that we may rank the elements in $\Xbf(t)=\left\{\left\{X_i(t), i\ge 1\right\}\right\}, \, t\ge 0$, by decreasing order of the norms.
\begin{Prop} \label{prop:spine temporal}
For every $t\ge 0$, every nonnegative measurable function $f$ vanishing at $\partial$, and every $\overline{\Fcal}_t$--measurable nonnegative random variable $B_t$, we have
\[
|\xbf|^{\omega} \Ehat_{\xbf}\big[f(\Xhat(t))B_t\big]
=
\Ecal_{\xbf}\Bigg[ \sum_{i\ge 1} |X_i(t)|^{\omega}f(X_i(t))B_t\Bigg].
\]
\end{Prop}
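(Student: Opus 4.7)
The plan is to reduce the claim to the genealogical many-to-one formula associated with \eqref{eq: generation spine} by partitioning both sides according to the generation of the spine cell at time $t$. Since every cell alive at time $t$ belongs to a unique generation, we can write
\[
\sum_{i\ge 1}|X_i(t)|^{\omega}f(X_i(t))B_t = \sum_{n\ge 0}\sum_{|u|=n}|\Xcal_u(t-b_u)|^{\omega}f(\Xcal_u(t-b_u))\mathbf{1}_{\{b_u\le t<b_u+\zeta_u\}}B_t,
\]
and, by \eqref{eq:tagged cell}, $f(\widehat X(t))B_t = \sum_{n\ge 0} f(\Xcal_{\Lcal(n)}(t-b_{\Lcal(n)}))B_t\mathbf{1}_{\{b_{\Lcal(n)}\le t<b_{\Lcal(n+1)}\}}$. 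It thus suffices to match the two sides level by level, i.e.\ to prove, for every $n\ge 0$,
\[
|\xbf|^{\omega}\widehat\Ecal_\xbf\big[f(\Xcal_{\Lcal(n)}(t-b_{\Lcal(n)}))B_t\mathbf{1}_{\{b_{\Lcal(n)}\le t<b_{\Lcal(n+1)}\}}\big] = \Ecal_\xbf\Big[\sum_{|u|=n}|\Xcal_u(t-b_u)|^{\omega}f(\Xcal_u(t-b_u))\mathbf{1}_{\{b_u\le t<b_u+\zeta_u\}}B_t\Big].
\]

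To establish the level-$n$ identity, I would condition on $\Gscr_n$. Under this conditioning, the cells $(\Xcal_u)_{|u|=n}$ are independent copies of $X$ starting from their respective initial points, each giving rise to its own independent subtree, and $B_t$ splits into a $\Gscr_n$-measurable factor and a product of functionals of these subtrees. The key tool is then a single-cell biasing identity coming from the martingale $M(s)=|X(s)|^{\omega}+\sum_{0<r\le s\wedge\zeta}|\Delta X(r)|^{\omega}$ of the preceding proposition: the change of measure with density $M(s)/|\xbf'|^{\omega}$ on $F^X_s$ absorbs the weight $|\Xcal_u(t-b_u)|^{\omega}\mathbf{1}_{\{t-b_u<\zeta_u\}}$ into a biased law for the trajectory of $\Xcal_u$, leaving only the factor $|\Xcal_u(0)|^{\omega}$ outside the expectation. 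Summing over $|u|=n$ and applying the genealogical many-to-one formula at generation $n$ converts the resulting $\Ecal_\xbf\big[\sum_{|u|=n}|\Xcal_u(0)|^{\omega}(\cdot)_u\big]$ into $|\xbf|^{\omega}\widehat\Ecal_\xbf[(\cdot)_{\Lcal(n)}]$ via the selection rule \eqref{eq: generation spine}, yielding the left-hand side of the level-$n$ identity.

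The main obstacle is the bookkeeping of how $B_t$ interacts with the daughter cells born from $\Xcal_u$ in the interval $(b_u,t]$: these daughters contribute both to the additive term in $M$ and to $B_t$ as initial cells of later generations in the system. The crucial point is that biasing by the full martingale $M$ (rather than simply by $|X(s)|^{\omega}$) produces exactly the right weighting on these daughters to be compatible with the spine selection rule on generations $n+1, n+2,\ldots$, so that the level-$n$ identities are internally consistent and their sum over $n$ collapses to the stated temporal many-to-one formula. This mirrors the monotype strategy of \cite{Ber-GF} and \cite{DS}.
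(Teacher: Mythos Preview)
Your overall strategy---decompose both sides by the generation $n_t$ of the spine and match level by level---is the standard one, and is exactly what the paper defers to \cite{DP23} for. However, several steps in your execution are muddled in ways that would not survive a careful write-up.

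First, conditioning on $\Gscr_n=\sigma(\Xcal_u,|u|\le n)$ does not make the generation-$n$ cells ``independent copies of $X$'': they are $\Gscr_n$-measurable. What becomes conditionally independent given $\Gscr_n$ are the subtrees rooted at the vertices of generation $n+1$. Second, and more importantly, $B_t$ does \emph{not} in general factor as a $\Gscr_n$-measurable piece times a product over subtrees. The mechanism that actually replaces this is: for fixed $u$ with $|u|=n$ and a fixed child $uj$, the event $\{b_{uj}>t\}$ is $\Gscr_n$-measurable, and on that event the subtree rooted at $uj$ does not contribute to $\overline{\Xbf}(s)$ for $s\le t$; hence $B_t$ is conditionally independent of that single subtree given $\Gscr_n$. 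This is what allows you to apply the genealogical formula $|\xbf|^{\omega}\Ehat_{\xbf}\big[\Psi\,\mathds{1}_{\{\Lcal(n+1)=uj\}}\big]=\Ecal_{\xbf}\big[|\Xcal_{uj}(0)|^{\omega}\Psi\big]$ and collect $\sum_j |\Xcal_{uj}(0)|^{\omega}\mathds{1}_{\{b_{uj}>t\}}$ on the $\Pcal_{\xbf}$ side.

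Third, the martingale $M$ is not used as a change of measure here. The relevant identity is the conditional one
\[
\Eb_y\Big[\sum_{s>r}|\Delta X(s)|^{\omega}\,\Big|\,F^X_r\Big]=|X(r)|^{\omega}\mathds{1}_{\{r<\zeta\}},
\]
which follows from the Markov property and \eqref{eq: omega generalised.0}. Since $B_t$ does not depend on $(\Xcal_u(s),\,s>t-b_u)$ (those correspond to real times after $t$), this identity converts $\sum_j |\Xcal_{uj}(0)|^{\omega}\mathds{1}_{\{b_{uj}>t\}}$ into $|\Xcal_u(t-b_u)|^{\omega}\mathds{1}_{\{b_u\le t<b_u+\zeta_u\}}$, which is the right-hand side at level $n$. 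No ``biasing by $M$'' is involved, and your last paragraph's resolution via ``the full martingale $M$ producing the right weighting on generations $n+1,n+2,\ldots$'' is not how the bookkeeping closes.
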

\begin{proof}
See Proposition 4.1 in \cite{DP23} for the multitype case, which is easily extended.
\end{proof}

\section{The spine decomposition of spatial isotropic growth-fragmentation processes}

\label{sec: spine isotropic}

\subsection{The spine decomposition for isotropic growth-fragmentation processes}
In this section, we describe the law of the growth-fragmentation process under the change of measures $\Phat_{\xbf}$, $\xbf\in\R^d$, and in particular the law of the tagged cell $\Xhat$ \eqref{eq:tagged cell}. In order to make sense of this, we need to rebuild the growth-fragmentation along the spine, and so we must first label the jumps of $\Xhat$. In general, one cannot rank those in lexicographical order. Instead, they will be labelled by couples $(n,j)$, where $n\ge 0$ stands for the generation of the tagged cell immediately before the jump, and $j\ge 1$ is the rank (in the usual lexicographical sense) of the jump among those of the tagged cell at generation $n$ (including the final jump, when the generation changes to $n+1$). For each such $(n,j)$, we define the growth-fragmentation $\Xbfhat_{n,j}$ induced by the corresponding jump. More precisely, if the generation stays constant during the $(n,j)$--jump, then we set
\[
\Xbfhat_{n,j}(t) := \left\{\left\{ \Xcal_{uw}(t-b_{uw}+b_u), \; w\in\Ub \; \text{and} \; b_{uw}\le t+b_u<b_{uw}+\zeta_{uw} \right\}\right\},
\]
where $u$ is the label of the cell born at the $(n,j)$--jump. Otherwise the $(n,j)$--jump corresponds to a jump for the generation of the tagged cell so that the tagged cell jumps from label $u$ to label $uj$ say. In this case, we set
\begin{multline*} 
\Xbfhat_{n,j}(t) :=   \left\{\left\{ (\Xcal_{u}(t-b_{u}+b_{uj}), \Jcal_{u}(t-b_{u}+b_{uj})),  \;  b_{u}\le t+b_{uj}<b_{u}+\zeta_{u} \right\}\right\} \\ \cup \left\{\left\{ (\Xcal_{uw}(t-b_{uw}+b_{uj}), \Jcal_{uw}(t-b_{uw}+b_{uj})), \; w\notin \mathbb{T}_{uj} \; \text{and} \;  b_{uj}\le b_{uw}\le t+b_{uj}<b_{uw}+\zeta_{uw} \right\}\right\},
\end{multline*}
where for $v\in\Ub$, $\mathbb{T}_v := \{vw, \, w\in\Ub\}$.
Finally, we agree that $\Xbfhat_{n,j} := \partial$ when the $(n,j)$--jump does not exist, and this sets $\Xbfhat_{n,j}$ for all $n\ge 0$ and all $j\ge 1$.

Recall also that $n_t$ was defined in \eqref{eq:tagged cell} and stands for the generation of the spine at time $t$. We can now state our main theorem describing the law of the growth-fragmentation under $\Phat_{\xbf}$. 
\begin{Thm} \label{thm:spine}
Under $\Phat_{\xbf}$, $\Xhat$ is a self-similar Markov process with values in $\R^d$ and index $\alpha$. The Lévy system of the underlying Markov additive process $(\xihat,\Thetahat)$ is given by $(\Hhat,\Lhat)$ where $\Hhat_t=t$ and 
\begin{equation} \label{eq: Lhat}
\Lhat_{\theta}(\mathrm{d}y,\mathrm{d}\phi) = \mathrm{e}^{\omega y}\big(L_{\theta}(\mathrm{d}y,\mathrm{d}\phi) + \Ltilde_{\theta}(\mathrm{d}y,\mathrm{d}\phi) \big).
\end{equation}
Besides, $\Xhat$ is isotropic, and the ordinate $\xihat$ is a Lévy process with Laplace exponent $\psihat(q)=\kappa(\omega+q)$. Moreover, conditionally on $(\Xhat(t), n_t)_{0\le t<b_{\Lcal}}$, the processes $\Xbfhat_{n,j}$, $n\ge 0$, $j\ge 1$, are independent and each $\Xbfhat_{n,j}$ has law $\Pbf_{x(n,j)}$ where $-x(n,j)$ is the size of the $(n,j)$--th jump.
\end{Thm}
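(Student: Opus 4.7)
Following the strategy of Section 4 in \cite{DP23}, the plan is to combine the temporal many-to-one formula (\cref{prop:spine temporal}) with the compensation formula \eqref{eq: compensation MAP} for the driving MAP $(\xi,\Theta)$. The former reduces any expectation involving $\Xhat$ under $\Phat_{\xbf}$ to an expectation on the full cell system under $\Pcal_{\xbf}$, weighted by $|X_i(t)|^{\omega}$, which in turn can be evaluated through the underlying MAP characteristics. The Markov property of $\Xhat$ under $\Phat$ will follow from the branching property of the cell system together with the locality of the spine-selection rule, which depends only on the current cell's state. Self-similarity with index $\alpha$ is inherited through the change of measure since scaling invariance of the cell system is preserved; isotropy transfers from $\Pb$ to $\Phat$ because $\Mcal(n)$, being a sum of $|\cdot|^{\omega}$'s, is rotation-invariant.

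The crucial step is the identification of the Lévy system $\Lhat$. The jumps of $\Xhat$ split into two disjoint families. \emph{Continuation jumps} arise from a jump $(\Delta\xi,\Theta\to\Theta')$ of the current spine cell $\Xcal_{\Lcal(n)}$ at which the spine does not switch; they yield $\Delta\xihat = \Delta\xi$ and $\Thetahat = \Theta'$, contributing through $L_{\theta}$. \emph{Switch jumps} occur when the spine moves to a newly born daughter of initial size $-\Delta X$; starting from angular position $\theta$, this produces $\Delta\xihat = y = \log|\theta - \mathrm{e}^{\Delta\xi}\Theta'|$ and $\Thetahat = \phi = (\theta - \mathrm{e}^{\Delta\xi}\Theta')/|\theta - \mathrm{e}^{\Delta\xi}\Theta'|$, and the image of $L_{\theta}$ under this map is precisely $\Ltilde_{\theta}$. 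Applying \cref{prop:spine temporal} to a test point measure $\sum F(\xihat(s^-),\Delta\xihat(s),\Thetahat(s^-),\Thetahat(s))$ over the jumps of $\Xhat$, each jump is size-biased by $|\cdot|^{\omega}$ evaluated at the landing point of $\Xhat$, producing the factor $\mathrm{e}^{\omega y}$ that multiplies both $L_{\theta}$ and $\Ltilde_{\theta}$. Assembling the two contributions yields $\Lhat_{\theta}(\mathrm{d}y,\mathrm{d}\phi) = \mathrm{e}^{\omega y}(L_{\theta}(\mathrm{d}y,\mathrm{d}\phi) + \Ltilde_{\theta}(\mathrm{d}y,\mathrm{d}\phi))$. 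By \cref{prop:isotropy Levy}, $\xihat$ is then a Lévy process, and its Laplace exponent is read off from $\Lhat$ via the Lévy–Khintchine formula: combining the Esscher contribution $\psi(\cdot+\omega) - \psi(\omega)$ of the continuation part, the compound contribution $\int(\mathrm{e}^{qy}-1)\mathrm{e}^{\omega y}\Ltilde(\mathrm{d}y,\mathrm{d}\phi)$ of the switch part, and the identity $\psi(\omega) = -\int \mathrm{e}^{\omega y}\Ltilde(\mathrm{d}y,\mathrm{d}\phi)$ coming from Assumption (H) together with \eqref{eq:kappa}, one obtains $\psihat(q) = \kappa(q+\omega)$.

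Finally, the conditional independence of the subtrees $\Xbfhat_{n,j}$ with the claimed laws follows from the branching property of the cell system (\cref{prop: temporal branching spatial GF}): under $\Pcal$, subtrees rooted at distinct birth events are independent growth-fragmentations with laws $\Pbf_{\mathbf{x}(n,j)}$, and under $\Phat$ the only modification is the biased choice of the distinguished ray, so that conditionally on the spine, the off-spine subtrees retain their unbiased law. The principal obstacle throughout is the careful bookkeeping of intensities and the clean separation of continuation from switch jumps when applying the compensation formula on both sides of the many-to-one identity; here the isotropy-induced change of variables \eqref{eq: Ltilde isotropy} plays a central role, as it is what ensures that $\Lhat_{\theta}$ is intrinsically expressible as a function of the current angular position only, and thus that $\Xhat$ is genuinely a self-similar Markov process driven by a MAP.
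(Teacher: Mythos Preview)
Your high-level outline is correct and close in spirit to the paper: the split into continuation jumps (spine stays) and switch jumps (spine moves to the newborn child), the appearance of the $\mathrm{e}^{\omega y}$ tilt on both $L_{\theta}$ and $\Ltilde_{\theta}$, the computation of $\psihat$, and the conditional independence of the off-spine subtrees all match what the paper establishes.

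There is, however, a genuine gap in the tool you invoke. You propose to apply \cref{prop:spine temporal} to a sum $\sum_s F(\xihat(s^-),\Delta\xihat(s),\Thetahat(s^-),\Thetahat(s))$ over the jumps of $\Xhat$. But \cref{prop:spine temporal} is only stated for functionals of the form $f(\Xhat(t))B_t$ with $B_t$ measurable for $\overline{\Fcal}_t$; a sum over the spine's jumps is a functional of the \emph{path} of $\Xhat$, and that path is not $\overline{\Fcal}_t$--measurable (it depends on the extra randomness in the leaf $\Lcal$). A path-level many-to-one could be set up, but you have not done so, and it is not provided in the paper. Relatedly, your explanation that ``each jump is size-biased by $|\cdot|^{\omega}$ at the landing point of $\Xhat$'' glosses over the fact that for continuation jumps the spine does \emph{not} switch at that instant; the $\mathrm{e}^{\omega y}$ factor on $L_{\theta}$ arises instead because the \emph{future} spine selection biases the pre-switch trajectory via $\mathrm{e}^{\omega\xi(s)}$, i.e.\ through an Esscher transform.

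The paper bypasses both issues by working not with the temporal many-to-one but directly with the genealogical change of measure \eqref{eq: generation spine}, and by decomposing at the first switch time $b_{\Lcal(1)}$. On $\{s<b_{\Lcal(1)}\}$ the spine coincides with the Eve cell, so the compensation formula \eqref{eq: compensation MAP} for the \emph{original} MAP $(\xi,\Theta)$ applies; the weight $\sum_{t>s}|\Delta X(t)|^{\omega}$ coming from the leaf selection collapses via the Markov property and \eqref{eq: omega generalised.0} to $\mathrm{e}^{\omega\xi(s)}$, which is then absorbed by the Esscher tilt and produces $\mathrm{e}^{\omega x}L_{\theta}(\mathrm{d}x,\mathrm{d}\varphi)$. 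The jump at $b_{\Lcal(1)}$ is handled separately and yields $\mathrm{e}^{\omega y}\Ltilde_{\theta}(\mathrm{d}y,\mathrm{d}\phi)$ after the change of variables. The two pieces are recombined and then extended to all generations by the Markov property. This first-switch-time decomposition is the structural device missing from your sketch; it is what allows one to compute with the known Lévy system of $(\xi,\Theta)$ rather than the unknown one of $(\xihat,\Thetahat)$.
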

\begin{Rk} \label{rk: spine}
\begin{enumerate}
    \item Observe that we have the following description of the MAP $(\xihat,\Thetahat)$. Let $(\eta^0,\Phi^0)$ be a MAP with Lévy system $(H^0,L^0)$ given by $H^0_t:=t$ and $L^0_{\theta}(\mathrm{d}y,\mathrm{d}\phi):=\mathrm{e}^{\omega y} L_{\theta}(\mathrm{d}y,\mathrm{d}\phi)$. Consider an independent compound Poisson process $D=(D_1,D_2)$ on $\R^*\times \Sb^{d-1}$ with intensity measure $\mathrm{e}^{\omega y} \Ltilde(\mathrm{d}y,\mathrm{d}\phi)$. This definition makes sense because, since $\kappa(\omega)=0$, 
    \[
    \int_{\R^*\times \Sb^{d-1}}\Ltilde(\mathrm{d}y,\mathrm{d}\varphi)\mathrm{e}^{\omega y}
    =
    - \psi(\omega) <\infty.
    \]
    Then $(\xihat,\Thetahat)$ is the \emph{superimposition} of $(\eta^0,\Phi^0)$ and $D$, in the following sense. Let  $T_1$ the first jump time of $D$, which is exponential with parameter $-\psi(\omega)$. Then $(\xihat(s),\Thetahat(s), s<b_{\Lcal(1)})$ evolves as $(\eta^0(s),\Phi^0(s), s<T_1)$, and $(\xihat(b_{\Lcal(1)}),\Thetahat(b_{\Lcal(1)}))$ is distributed as 
    \[
    (\eta^0(T_1)+D_1(T_1), U_{\Phi^0(T_1)}\cdot D_2(T_1)),
    \]
     where $U_{\theta}$ is an isommetry mapping $(1,0,\ldots,0)$ to $\theta$.
    \item The proof actually provides a more precise statement describing the law of $(\Xhat(t),n_t, t\ge 0)$. The process $n_t$ is then the Poisson process counting the jumps \added{arising in $D$} up to the usual Lamperti time change.
    \item \added{The MAP $(\xi^0,\eta^0)$ is exactly the so-called \emph{Esscher transform} $(\xiomega,\Thetaomega)$ of $(\xi,\Theta)$}. More precisely, recall that in the isotropic setting, $\xi$ is itself a Lévy process, so that we can consider the usual exponential martingale $(\mathrm{e}^{\omega\xi(t)-t\psi(\omega)},t\ge 0)$. Then the law of $(\xi,\Theta)$ under the exponential change of measures is $(\xiomega,\Thetaomega)$. This will appear in the proof.
    \item Combining these two remarks casts light on equation \eqref{eq: Lhat}. Loosely speaking, it is a decomposition of $\Lhat$ in terms of the jumps of the Esscher transform of $(\xi,\Theta)$ and the special jumps when the spine picks one of the jumps according to \eqref{eq: generation spine}.
    \item  We deduce from \cref{thm:spine} that the temporal version of $(\Mcal(n), n\ge 0)$, namely
    \[
    \Mcal_t := \sum_{i=1}^{\infty} |X_i(t)|^{\omega}, \quad t\ge 0,
    \]
    is a $(\Fcal_t)$--martingale if, and only if, $\alpha\kappa'(\omega)<0$. Indeed, by taking $f=\mathds{1}_{\partial}$ the many-to-one formula (\cref{prop:spine temporal}) yields that $(\Mcal_t,t\ge 0)$ is a supermartingale, and that it is a martingale if, and only if, $X$ has infinite lifetime. From the Lamperti representation of $|X|$, and the expression $\psihat(q)=\kappa(\omega+q)$ of the Laplace exponent of $\xihat$, this happens exactly when $\alpha\kappa'(\omega)<0$.
\end{enumerate}
\end{Rk}

\subsection{Proof of \cref{thm:spine}}
The proof will roughly follow the same lines as the one of Theorem 4.3 in \cite{DP23}, although the structure of the modulator is more involved.

\bigskip
\noindent \textbf{The law of the spine $\Xhat$.} The definition of $\Xhat$ readily shows that $\Xhat$ is an $\alpha$--self-similar Markov process. By Lamperti's time change, we may place ourselves in the homogeneous case $\alpha=0$. In this case, note that there is no time change between $\Xhat$ and $(\xihat,\Thetahat)$. For this reason, and to avoid notational clutter, we will sometimes make an abuse of notation by considering them on the same probability space. Likewise, we will use expressions involving both $X$  and its MAP $(\xi,\Theta)$ as a shorthand. Moreover, the Markov property implies that we only need to check the compensation formula up to the first time $b_{\Lcal(1)}$ when the spine selects another generation. 
More precisely, we want to show that
\begin{multline}
\Ehat_{\theta}\left[\sum_{s>0}  F(s,\xihat(s^-),\Delta\xihat(s),\Thetahat(s^-),\Thetahat(s)) \mathds{1}_{\{s\le b_{\Lcal(1)}\}} \right] \\
=
\Ehat_{\theta}\left[ \int_{0}^{\infty} \mathrm{d}s \mathrm{e}^{\psi(\omega)s} \int_{\R^*\times \Sb^{d-1}} \Lhat_{\Thetahat(s)}(\mathrm{d}x, \mathrm{d}\varphi) F(s,\xihat(s),x, \Thetahat(s), \varphi)\right]. 
\end{multline}
We may split the sum into two parts:
\begin{multline}
    \Ehat_{\theta}\left[\sum_{s>0}  F(s,\xihat(s^-),\Delta\xihat(s),\Thetahat(s^-),\Thetahat(s)) \mathds{1}_{\{s\le b_{\Lcal(1)}\}} \right] = \\
    \Ehat_{\theta}\left[\sum_{s<b_{\Lcal(1)}} F(s,\xihat(s^-),\Delta\xihat(s),\Thetahat(s^-),\Thetahat(s)) \right] + \Ehat_{\theta}\Big[ F(b_{\Lcal(1)},\xihat(b_{\Lcal(1)}^-),\Delta\xihat(b_{\Lcal(1)}),\Thetahat(b_{\Lcal(1)}^-),\Thetahat(b_{\Lcal(1)})) \Big].     \label{eq: sum decomposed b(1)} 
\end{multline}

We compute the first term of \eqref{eq: sum decomposed b(1)}. By definition of $b_{\Lcal(1)}$, 
\[
(\xihat(s),\Thetahat(s), s<b_{\Lcal(1)}) = (\xi(s),\Theta(s), s<b_{\Lcal(1)}).
\]
Applying the change of measure \eqref{eq: generation spine}, and recalling that we are in the homogeneous case, we get
\begin{multline}
\Ehat_{\theta}\left[\sum_{s<b_{\Lcal(1)}} F(s,\xihat(s^-),\Delta\xihat(s),\Thetahat(s^-),\Thetahat(s)) \right] \\
=
\Eb_{\theta}\left[\sum_{s>0}\sum_{t>s} F(s,\xi(s^-),\Delta\xi(s),\Theta(s^-),\Theta(s)) |\Delta X(t)|^{\omega}\right]. \label{eq: split b1}
\end{multline}
Now, the Markov property of $X$ at fixed time $s>0$ yields that 
\begin{multline*}
\Eb_{\theta}\left[\sum_{t>s} F(s,\xi(s^-),\Delta\xi(s),\Theta(s^-),\Theta(s)) |\Delta X(t)|^{\omega}\right] \\
=
\Eb_{\theta}\left[ F(s,\xi(s^-),\Delta\xi(s),\Theta(s^-),\Theta(s)) \Eb_{X(s)} \left[\sum_{t>0} |\Delta X(t)|^{\omega}\right]\right], 
\end{multline*}
and using the definition of $\omega$ in identity \eqref{eq: omega generalised.0},
\[
\Eb_{\theta}\left[\sum_{t>s} F(s,\xi(s^-),\Delta\xi(s),\Theta(s^-),\Theta(s)) |\Delta X(t)|^{\omega}\right]
=
\mathtt{E}_{0,\theta}\Big[ F(s,\xi(s^-),\Delta\xi(s),\Theta(s^-),\Theta(s)) \mathrm{e}^{\omega \xi(s)}\Big].
\]
Coming back to \eqref{eq: split b1}, this means 
\[
\Ehat_{\theta}\left[\sum_{s<b_{\Lcal(1)}} F(s,\xihat(s^-),\Delta\xihat(s),\Thetahat(s^-),\Thetahat(s)) \right] 
=
\mathtt{E}_{0,\theta}\left[\sum_{s>0} F(s,\xi(s^-),\Delta\xi(s),\Theta(s^-),\Theta(s)) \mathrm{e}^{\omega \xi(s)}\right]. 
\]
Using the compensation formula entails
\begin{multline}
\Ehat_{\theta}\left[\sum_{s<b_{\Lcal(1)}} F(s,\xihat(s^-),\Delta\xihat(s),\Thetahat(s^-),\Thetahat(s)) \right] \\
=
\mathtt{E}_{0,\theta}\left[\int_{0}^{\infty} \mathrm{d}s \mathrm{e}^{\omega \xi(s)} \int_{\R^*\times \Sb^{d-1}} L_{\Theta(s)}(\mathrm{d}x,\mathrm{d}\varphi) \mathrm{e}^{\omega x} F(s,\xi(s),x,\Theta(s),\varphi) \right]. \label{eq: compensation < b1}
\end{multline}
We now tilt the measure using the classical Esscher transform (see for example \cite{KP}). Recall from \cref{rk: spine} that the process obtained has the law $\mathtt{P}^0_{0,\theta}$ of $(\eta^0,\Phi^0)$. Thus equation \eqref{eq: compensation < b1} rewrites 
\begin{multline}
\Ehat_{\theta}\left[\sum_{s<b_{\Lcal(1)}} F(s,\xihat(s^-),\Delta\xihat(s),\Thetahat(s^-),\Thetahat(s)) \right] \\
=
\mathtt{E}^0_{0,\theta}\left[\int_{0}^{\infty} \mathrm{d}s  \mathrm{e}^{\psi(\omega)s}\int_{\R^*\times \Sb^{d-1}} L_{\Phi^0(s)}(\mathrm{d}x,\mathrm{d}\varphi) \mathrm{e}^{\omega x} F(s,\eta^0(s),x,\Phi^0(s),\varphi) \right]. \label{eq: first term b1}
\end{multline}
Note that, since $L_{\theta}(\mathrm{d}x,\mathrm{d}\varphi) \mathrm{e}^{\omega x}$ is the jump measure of the Lévy system associated with $(\eta^0,\Phi^0)$, this shows that $(\xihat(s),\Thetahat(s), s<b_{\Lcal(1)})$ behaves as $(\eta^0(s),\Phi^0(s), s<T_1)$, where $T_1$ is an independent exponential time with parameter $-\psi(\omega)$, a fact that could have been derived directly.

Let us now compute the second term of \eqref{eq: sum decomposed b(1)}. Changing the measure according to \eqref{eq: generation spine} again, one obtains
\begin{align*}
\Ehat_{\theta}&\Big[ F(b_{\Lcal(1)},\xihat(b_{\Lcal(1)}^-),\Delta\xihat(b_{\Lcal(1)}),\Thetahat(b_{\Lcal(1)}^-),\Thetahat(b_{\Lcal(1)})) \Big] \\
&=
\Eb_{\theta}\left[ \sum_{s>0} |\Delta X(s)|^{\omega} F\left(s,\xi(s^-),\log|\Delta X(s)|-\xi(s^-), \Theta(s^-), \frac{\Delta X(s)}{|\Delta X(s)|}\right)\right] \\
&=
\mathtt{E}_{0,\theta}\left[ \sum_{s>0} \mathrm{e}^{\omega\xi(s^-)}|\Theta(s^-)-\mathrm{e}^{\Delta \xi(s)}\Theta(s)|^{\omega} F(s,\xi(s^-),\log|\Theta(s^-)-\mathrm{e}^{\Delta \xi(s)}\Theta(s)|, \Theta(s^-), \Theta_{\Delta}(s))\right],
\end{align*}
where as usual 
\[
\Theta_{\Delta}(s) = \frac{\Theta(s^-)-\mathrm{e}^{\Delta \xi(s)}\Theta(s)}{|\Theta(s^-)-\mathrm{e}^{\Delta \xi(s)}\Theta(s)|}.
\]
 Using the compensation formula for $(\xi,\Theta)$, this is
\begin{multline*}
    \Ehat_{\theta}\Big[ F(b_{\Lcal(1)},\xihat(b_{\Lcal(1)}^-),\Delta\xihat(b_{\Lcal(1)}),\Thetahat(b_{\Lcal(1)}^-),\Thetahat(b_{\Lcal(1)})) \Big] \\
    =
    \mathtt{E}_{0,\theta}\left[ \int_{0}^{\infty} \mathrm{d}s \mathrm{e}^{\omega\xi(s)} \int L_{\Theta(s)}(\mathrm{d}x, \mathrm{d}\varphi) |\Theta(s)-\mathrm{e}^x\varphi|^{\omega}\right . \\
    \left. \times F\left(s,\xi(s),\log|\Theta(s)-\mathrm{e}^{x}\varphi|, \Theta(s), \frac{\Theta(s)-\mathrm{e}^x\varphi}{|\Theta(s)-\mathrm{e}^x\varphi|}\right)\right]
\end{multline*}
We want to perform the change of variables $(y,\phi)=(\log |\theta-\mathrm{e}^x\varphi|,\frac{\theta-\mathrm{e}^x\varphi}{|\theta-\mathrm{e}^x\varphi|})$ for fixed $\theta$ in the second integral. Recall that we have defined \added{$\Ltilde_{\theta}$} as the image measure of $L_{\theta}$ through this mapping, and that \added{these measures satisfy the isotropy relationship \eqref{eq: Ltilde isotropy}}. Therefore,
\begin{multline*}
    \Ehat_{\theta}\Big[ F(b_{\Lcal(1)},\xihat(b_{\Lcal(1)}^-),\Delta\xihat(b_{\Lcal(1)}),\Thetahat(b_{\Lcal(1)}^-),\Thetahat(b_{\Lcal(1)})) \Big] \\
    =
    \mathtt{E}_{0,\theta}\left[ \int_{0}^{\infty} \mathrm{d}s \mathrm{e}^{\omega\xi(s)} \int_{\R^*\times \Sb^{d-1}} \Ltilde_{\added{\Theta(s)}}(\mathrm{d}y, \mathrm{d}\phi) \mathrm{e}^{\omega y} F(s,\xi(s),y, \Theta(s), \phi)\right].
\end{multline*}
Tilting with the exponential martingale of $\xi$ finally provides
\begin{multline}
    \Ehat_{\theta}\Big[ F(b_{\Lcal(1)},\xihat(b_{\Lcal(1)}^-),\Delta\xihat(b_{\Lcal(1)}),\Thetahat(b_{\Lcal(1)}^-),\Thetahat(b_{\Lcal(1)})) \Big] \\
    =
    {\tt E}^0_{0,\theta}\left[ \int_{0}^{\infty} \mathrm{d}s \mathrm{e}^{\psi(\omega)s} \int_{\R^*\times \Sb^{d-1}} \Ltilde_{\added{\Phi^0(s)}}(\mathrm{d}y, \mathrm{d}\phi) \mathrm{e}^{\omega y} F(s,\added{\eta^0(s)},y, \added{\Phi^0(s)}, \phi)\right]. \label{eq: second term b1}
\end{multline}
Putting together \eqref{eq: sum decomposed b(1)}, \eqref{eq: first term b1} and \eqref{eq: second term b1}, we end up with
\begin{multline*}
\Ehat_{\theta}\left[\sum_{s>0}  F(s,\xihat(s^-),\Delta\xihat(s),\Thetahat(s^-),\Thetahat(s)) \mathds{1}_{\{s\le b_{\Lcal(1)}\}} \right] \\
=
{\tt  E}^0_{0,\theta}\left[ \int_{0}^{\infty} \mathrm{d}s \mathrm{e}^{\psi(\omega)s} \int_{\R^*\times \Sb^{d-1}} \Lhat_{\Phi^0(s)}(\mathrm{d}x, \mathrm{d}\varphi) F(s,\eta^0(s),x, \Phi^0(s), \varphi)\right],
\end{multline*}
and \added{since $(\eta^0(s),\Phi^0(s),s<T_1)$ has the same law as $(\xihat(s),\Thetahat(s), s<b_{\Lcal(1)})$}, we can rewrite this as
\begin{multline}
\Ehat_{\theta}\left[\sum_{s>0}  F(s,\xihat(s^-),\Delta\xihat(s),\Thetahat(s^-),\Thetahat(s)) \mathds{1}_{\{s\le b_{\Lcal(1)}\}} \right] \\
=
\Ehat_{\theta}\left[ \int_{0}^{\infty} \mathrm{d}s \mathrm{e}^{\psi(\omega)s} \int_{\R^*\times \Sb^{d-1}} \Lhat_{\Thetahat(s)}(\mathrm{d}x, \mathrm{d}\varphi) F(s,\xihat(s),x, \Thetahat(s), \varphi)\right]. \label{eq: final (xihat,Thetahat)}
\end{multline}
This completes the proof of \eqref{eq: Lhat}.

The second assertion of the theorem is then a straightforward consequence. First, it is clear that since $X$ is isotropic, so is $\Xhat$ by construction. Hence, by \cref{prop:isotropy Levy}, $\xihat$ must be a Lévy process. The expression for $\psihat$ can be found using a particular case of the compensation formula \eqref{eq: final (xihat,Thetahat)}. Alternatively, for any nonnegative measurable functionals $F$ and $G$ defined respectively on the space of finite càdlàg paths and on $\R$, we may compute
\begin{align*}
    &\Ehat_{\theta}\left[F(\xihat(s),s<b_{\Lcal(1)})G(\Delta \xihat(b_{\Lcal(1)})) \right] \\
    &=
    \Eb_{\theta} \left[ \sum_{t>0} |\Delta X(t)|^{\omega} F(\log|X(s)|,s<t)G\left(\log\frac{|\Delta X(t)|}{|X(t^-)|}\right)\right] \\
    &=
    \mathtt{E}_{0,\theta} \left[ \sum_{t>0} \mathrm{e}^{\omega \xi(t^-)}|\Theta(t^-)-\mathrm{e}^{\Delta \xi(t)} \Theta(t)|^{\omega} F(\xi(s),s<t)G\Big(\log |\Theta(t^-)-\mathrm{e}^{\Delta\xi(t)}\Theta(t)|\Big)\right] \\
    &=
    \mathtt{E}_{0,\theta}\left[\int_0^{\infty} \mathrm{d}t \mathrm{e}^{\omega \xi(t)} F(\xi(s),s<t)\int_{\R^*\times \Sb^{d-1}}  L_{\Theta(t)}(\mathrm{d}x,\mathrm{d}\varphi) |\Theta(t)-\mathrm{e}^{x}\varphi|^{\omega} G(\log |\Theta(t)-\mathrm{e}^x\varphi|)\right].
\end{align*}
By isotropy of $X$, the second integral does not depend on the angle $\Theta(t)$ (see \eqref{eq: Ltilde isotropy}). Hence by applying the change of variables $(y,\phi)=\Big(\log|\Theta(t)-\mathrm{e}^{x}\varphi|, \frac{\Theta(t)-\mathrm{e}^{x}\varphi}{|\Theta(t)-\mathrm{e}^{x}\varphi|}\Big)$, we end up with 
\[
\begin{split}
\Ehat_{\theta}&\left[F(\xihat(s),s<b_{\Lcal(1)})G(\Delta \xihat(b_{\Lcal(1)})) \right]\\
&\hspace{2cm}=
\mathtt{E}_{0,\theta}\left[\int_0^{\infty} \mathrm{d}t \mathrm{e}^{\omega \xi(t)} F(\xi(s),s<t)\right]\int_{\R^*\times \Sb^{d-1}}  \Ltilde(\mathrm{d}y,\mathrm{d}\phi) \mathrm{e}^{\omega y} G(y).
\end{split}
\]
In words, this proves that $(\xihat(s),s<b_{\Lcal(1)})$ and $\Delta \xihat(b_{\Lcal(1)})$ are independent. The former has the law of $\xi$ killed according to its exponential martingale, leading to a Lévy process with Laplace exponent $q\mapsto \psi(\omega+q)$. On the other hand, the latter is distributed as $(\psi(\omega))^{-1} \int_{\phi\in \Sb^{d-1}} \Ltilde(\mathrm{d}y,\mathrm{d}\phi) \mathrm{e}^{\omega y}$, which is the law of the first jump of a compound Poisson process with intensity measure $\int_{\phi\in \Sb^{d-1}} \Ltilde(\mathrm{d}y,\mathrm{d}\phi) \mathrm{e}^{\omega y}$. By removing the killing, this entails that $\xihat$ has Laplace exponent
\[
\psihat(q) = \psi(\omega+q) - \psi(\omega) + \int_{\R^* \times \Sb^{d-1}} \Ltilde(\mathrm{d}y,\mathrm{d}\phi) \mathrm{e}^{\omega y} (\mathrm{e}^{qy}-1), \quad q\ge 0.
\]
Using that $\kappa(\omega)=0$, this is
\[
\psihat(q) = \psi(\omega+q) + \int_{\R^* \times \Sb^{d-1}} \Ltilde(\mathrm{d}y,\mathrm{d}\phi) \mathrm{e}^{(\omega+q) y}, \quad q\ge 0,
\]
whence $\psihat(q) = \kappa(\omega+q)$.

\bigskip
\noindent \textbf{The law of the growth-fragmentations $\Xbfhat_{n,j}$.}
We prove the last assertion of \cref{thm:spine}. It actually follows from the same arguments as in \cite{BBCK}, but we provide the proof for the sake of completeness. To avoid cumbersome notation, we will restrict to proving the statement for the first generation. This is then easily extended thanks to the branching property. Let $F$ be a nonnegative measurable functional on the space of càdlàg trajectories, and $G_j$, $j\ge 1$, be nonnegative measurable functionals on the space of multiset--valued paths. For $t>0$, denote by $(\Delta_j(t), j\ge 1)$ the sequence consisting of all the jumps of $\Xcal_{\varnothing}$ that happened strictly before time $t$, and the extra value of $\Xcal_{\varnothing}(t)$, all ranked in descending order of their absolute value. We are after the identity
\[
\Ehat_1 \left[F(\Xcal_{\varnothing}(s), 0\le s\le b_{\Lcal(1)}) \prod_{j\ge 1} G_j(\Xbfhat_{0,j})\right]
=
\Ehat_1 \left[F(\Xcal_{\varnothing}(s), 0\le s\le b_{\Lcal(1)}) \prod_{j\ge 1} \Ebf_{\Delta_j(b_{\Lcal(1)})} \left[G_j(\Xbf)\right]\right].
\]
We start from the left-hand side, and apply the change of measure \eqref{eq: generation spine}:
\begin{multline*}
\Ehat_1 \left[F(\Xcal_{\varnothing}(s), 0\le s\le b_{\Lcal(1)}) \prod_{j\ge 1} G_j(\Xbfhat_{0,j})\right] \\
=
\Ecal_1 \left[\sum_{t>0} |\Delta \Xcal_{\varnothing}(t)|^{\omega} F(\Xcal_{\varnothing}(s), 0\le s\le t) \prod_{j\ge 1} G_j(\Xbfhat_{0,j})\right].
\end{multline*}
Using the definition of the $\Xbfhat_{0,j}$ together with the branching property under $\Pcal_1$ give
\begin{multline*}
\Ehat_1 \left[F(\Xcal_{\varnothing}(s), 0\le s\le b_{\Lcal(1)}) \prod_{j\ge 1} G_j(\Xbfhat_{0,j})\right] \\
=
\Ecal_1 \left[\sum_{t>0} |\Delta \Xcal_{\varnothing}(t)|^{\omega} F(\Xcal_{\varnothing}(s), 0\le s\le t) \prod_{j\ge 1} \Ebf_{\Delta_j(t)} \left[G_j(\Xbf)\right]\right].
\end{multline*}
Applying the change of measure backwards, we get the desired identity. This concludes the proof of Theorem \ref{thm:spine}.

\subsection{Comments on the isotropy assumption} \label{sec:comments isotropy}
The previous analysis of $\R^d$--valued growth-fragmentations relies heavily on the isotropy assumption. Because of the complications caused by the underlying MAP structure, describing growth-fragmentations driven by anisotropic processes is a much more challenging task. We stress the importance of the isotropy assumption and comment on possible extensions to anisotropic growth-fragmentation processes. 

First, we expect that in the anisotropic case, there should be an angular component in all the (super-)martingales, appearing in particular in \cref{thm:M(n)}. This already takes place in the $d=1$ case \cite{DS}, for asymmetric signed growth-fragmentation processes, where the angular component is nothing but the sign. Remember in addition that, in analogy with the discrete multitype case \cite{DP23}, the \emph{types} in the spatial framework are the angles, and that the martingales in the multitype setting also involve the types, see Section 3.2 in \cite{DP23}. If $X$ is a $\R^d\setminus\{0\}$--valued self-similar Markov process, this actually prompts us to define, for $q\ge 0$, the linear operator 
\[ 
T_q: f\in \Ccal \mapsto \left( \theta\in\Sb^{d-1} \mapsto \Eb_{\theta}\left[\sum_{t>0} f(\Theta_{\Delta}(\varphi(t)))|\Delta X(t)|^q\right] \right),
\]
where $\varphi$ is the Lamperti-Kiu time-change. This is the analogue of the matrix $m$ appearing in the multitype case. Assume that $X$ has jumps (otherwise the construction is irrelevant), and that $M_q:=\underset{\theta\in\Sb^{d-1}}{\sup}\Eb_{\theta} \left[ \sum_{t>0} |\Delta X(t)|^q\right] <\infty$. Then $T_q(f)$ is well-defined for all $f\in \Ccal$, and for $f\in\Ccal$,
\[
|| T_q(f)||_{\infty} \le M_q || f ||_{\infty},
\]
whence $T_q$ is a continuous operator. Note also that, at least under the assumption that $X$ jumps with positive probability to any open set $D\subset\Sb^{d-1}$ of directions, $T_q$ is strongly positive, in the sense that for all nonnegative $f\ne 0$, $T_q(f)>0$. Assume moreover that $T_q$ takes values in $\Ccal$, and that it is a compact operator. Then, by the Krein-Rutman theorem \cite{Dei}, it must have positive spectral radius $r(q)$, which is moreover a simple eigenvalue associated to a positive eigenfunction $v$. In the spirit of Assumption \textbf{(H)}, \cref{sec: cumulant}, one could impose the additional assumption
\begin{center}
\textit{(H')} \quad \emph{There exists $\omega\ge 0$ such that $r(\omega)=1$.}  
\end{center}
Then by definition, we have 
\[
\forall \theta\in\Sb^{d-1}, \quad \Eb_{\theta}\left[ \sum_{t>0} v(\Theta_{\Delta}(\varphi(t)))|\Delta X(t)|^{\omega}\right]
=
v(\theta).
\]
This generalises to vectors in $\R^d$ by self-similarity of $X$:
\begin{equation} \label{eq: omega generalised}
\forall (r,\theta)\in \R_+ \times \Sb^{d-1}, \quad \Eb_{r\theta}\left[ \sum_{t>0} v(\Theta_{\Delta}(\varphi(t)))|\Delta X(t)|^{\omega}\right]
=
v(\theta) r^{\omega}.
\end{equation}
\begin{Rk} \label{rk:isotropic}
When $X$ is \emph{isotropic} in the sense of \cref{sec: ssmp}, one can show that $v(\theta)=1$ for all $\theta\in\Sb^{d-1}$ up to normalisation, and one therefore retrieves the cumulant approach presented in \cref{sec: cumulant}. Indeed, isotropy entails that if $v$ is an eigenfunction associated with $r(q)$, then for all isometries $U$, $v(U\cdot)$ is also an eigenfunction associated with $r(q)$, and we conclude by simplicity of the eigenvalue that $v(U\cdot) = v$, so that $v$ is constant. 
\end{Rk}

Once \eqref{eq: omega generalised} holds for some positive function $v$, then modulo these adjustments one can carry through the arguments for the genealogical martingale (\cref{thm:M(n)}) and the many-to-one formula (\cref{prop:spine temporal}). However, the description of the spine in \cref{thm:spine} is more involved. This is mainly due to the fact that the jump intensity at time $b_{\Lcal(1)}$ depends on the current angle of the spine. In the isotropic case, one can more or less get rid of this dependency. The proof of \cref{thm:spine} hinges upon the existence of an \emph{Esscher transform}. In the isotropic case, this readily comes from the fact that the ordinate $\xi$ of $X$ is a Lévy process, which does not hold anymore for anisotropic processes. This in particular yielded that $b_{\Lcal(1)}$ (up to Lamperti time change) is an exponential random variable. This last feature should not hold in general, as already indicated by the discrete multitype case.

\section{The growth-fragmentation embedded in Brownian excursions from hyperplanes}
\label{sec: excursion}

\subsection{The excursion measure} \label{sec:excursion measure}

\textbf{Construction of the excursion measure $\n_+$.} We fix $d\ge 3$ and recall from \cite{Bur} how one may define the Brownian excursion measure from hyperplanes in $\R^d$. Let $(\Omega, \Fscr, (\Fscr_t)_{t\ge 0}, \Pb)$ a complete filtered probability space, on which is defined a $d$--dimensional Brownian motion $B^d$. We single out the last coordinate and write $B^d=(B^{d-1}, Z)$. Introduce the set $\Xscr$ of càdlàg functions $x$ defined on some finite time interval $[0,R(x)]$, and the set $\Xscr_0$ of such functions $x$ in $\Xscr$ that are continuous and vanish at $R(x)$. Moreover, we define
\[
U := \left\{u:=(x_1,\ldots, x_{d-1},z)\in \Xscr^{d-1}\times \Xscr_0, \; R(x_1)=\ldots=R(x_{d-1})=R(z) \; \text{and} \; u(0)=0 \right\}.
\]
For $u\in U$, we shall write $R(u)$ for the common value of the lifetimes. All these sets are equipped with their usual $\sigma$--fields. Finally, in order to study the excursions of $B^d$ from the hyperplane $\Hcal=\{x_d=0\}$, we introduce the local time $(\ell_s, s\ge 0)$ at $0$ of the Brownian motion $Z$, as well as its inverse $(\uptau_s, s\ge 0)$. More precisely, $\ell$ is normalised as
\[
\ell_s := \lim_{\varepsilon\to 0} \frac{1}{2\varepsilon} \int_0^s \mathds{1}_{\{|Z_r|\le \varepsilon\}} \mathrm{d}r, \quad s\ge 0.
\]

The \emph{excursion process} $(\e_s, s>0)$ of our interest is easily defined following the one-dimensional case (see \cite{RY}, Chapter XII), by
\begin{itemize}
    \item[(i)] if $\uptau_s-\uptau_{s^-}>0$, then 
    \[\e_s : r\mapsto \left(B^{d-1}_{r+\uptau_{s^-}}-B^{d-1}_{\uptau_{s^-}}, Z_{r+\uptau_{s^-}}\right), \quad r\leq \uptau_s-\uptau_{s^-},\]
    \item[(ii)] if $\uptau_s-\uptau_{s^-}=0$, then $\e_s = \partial$,
\end{itemize}
where $\partial$ is some cemetery state. The following proposition directly stems from the one-dimensional case.

\medskip
\begin{Prop} \label{prop:excursion measure}
The excursion process $(\e_s, s>0)$ is a $(\Fscr_{\uptau_s})_{s>0}$--Poisson point process of excursions in $U$. Its intensity measure is 
\[
 (\mathrm{d}u',\mathrm{d}z) := n(\mathrm{d}z) \Pb\Big((B^{d-1})^{R(z)}\in \mathrm{d}u'\Big),
\]
where $n$ denotes the one-dimensional Itô measure on $\Xscr_0$, and for any process $X$, and any time $T$, $X^T:=(X_t, \, t\in [0,T])$.
\end{Prop}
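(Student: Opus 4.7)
My plan is to reduce to the classical one-dimensional Itô excursion theorem for the real Brownian motion $Z$, and then to ``mark'' each atom by the corresponding piece of $B^{d-1}$.

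First I would recall that, since $B^d=(B^{d-1},Z)$ is a $d$--dimensional Brownian motion, $Z$ is a one-dimensional Brownian motion independent of $B^{d-1}$, and that the filtration $(\Fscr_t)$ contains the natural filtrations of both. The local time $\ell$ and its right-continuous inverse $\uptau$ are measurable functionals of $Z$ alone, in particular each $\uptau_{s^-}$ is an $(\Fscr_t)$--stopping time. The classical Itô theorem (see e.g.\ \cite{RY}, Chap.~XII) then says that the family of one-dimensional excursions
\[
z_s:r\mapsto Z_{r+\uptau_{s^-}},\qquad 0\le r\le \uptau_s-\uptau_{s^-},
\]
(with $z_s=\partial$ when $\uptau_s=\uptau_{s^-}$) is an $(\Fscr_{\uptau_s})_{s>0}$--Poisson point process on $\Xscr_0$ with intensity $n$, the one-dimensional Itô measure.

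Next I would attach to each atom $z_s$ the corresponding piece of $B^{d-1}$, namely
\[
u'_s:r\mapsto B^{d-1}_{r+\uptau_{s^-}}-B^{d-1}_{\uptau_{s^-}},\qquad 0\le r\le R(z_s),
\]
and observe that $\e_s=(u'_s,z_s)$ by construction. To identify the joint law I would invoke the marking theorem for Poisson point processes: it suffices to show that, conditionally on $(z_s)_{s>0}$, the marks $(u'_s)_{s>0}$ are independent, with $u'_s$ distributed as $(B^{d-1})^{R(z_s)}$ under $\Pb$. Given that, the marked process $(\e_s)_{s>0}$ is automatically a Poisson point process on $U$ with intensity $n(\mathrm{d}z)\,\Pb((B^{d-1})^{R(z)}\in\mathrm{d}u')$, as claimed.

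The core step is therefore to establish this conditional independence of the $u'_s$ given the $z_s$. I would argue as follows. Since $B^{d-1}$ is independent of $Z$, it is independent of the whole family $(\uptau_s)_{s\ge 0}$. Applying the strong Markov property of $B^{d-1}$ at each stopping time $\uptau_{s^-}$, the shifted increment process $(B^{d-1}_{r+\uptau_{s^-}}-B^{d-1}_{\uptau_{s^-}})_{r\ge 0}$ is itself a $(d-1)$--dimensional Brownian motion, independent of $\Fscr_{\uptau_{s^-}}$ and hence of $(\uptau_t)_{t\le s^-}$ and of the previously seen marks. Restricting to the (deterministic given $z_s$) time interval $[0,R(z_s)]$ yields that $u'_s$, conditional on $(z_t)_{t\le s}$ and on $(u'_t)_{t<s}$, is a Brownian motion of duration $R(z_s)$. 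Iterating this for all atoms (using the standard $\varepsilon$--truncation that reduces to the locally finite case and then a monotone class argument) gives the required conditional independence. The only technical point to handle with care is this last step, going from finitely many excursions of length $\ge\varepsilon$ to the full, a priori uncountable, collection of excursions; this is handled by a standard monotone limit after checking the statement on the Poisson point process of excursions with lifetime $\ge\varepsilon$, where the marking theorem applies directly.
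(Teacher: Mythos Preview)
Your argument is correct and is exactly the approach the paper has in mind: the paper does not give a proof at all, only the remark that the proposition ``directly stems from the one-dimensional case,'' which is precisely your reduction to It\^o's theorem for $Z$ followed by independent Brownian marking of the excursions. Your write-up therefore spells out in detail what the paper leaves implicit.
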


\medskip
\noindent We shall denote by $\n_+$ and $\n_-$ the restrictions of $\n$ to $U^+:=\{(u',z)\in U, \; z\ge 0\}$ and $U^-:=\{(u',z)\in U, \; z\le 0\}$ respectively.
In \cite{Bur}, excursion measures from hyperplanes in $\R^d$ are rather constructed using Bessel processes. More precisely, one first samples the duration of the excursion with density $r\mapsto (2\pi r^3)^{-1/2}\mathds{1}_{\{r\ge 0\}}$ with respect to Lebesgue measure, and then for the last coordinate, one samples a $3$--dimensional Bessel bridge from $0$ to $0$ over $[0,r]$. This is equivalent to $\n_+$ in our representation (up to a multiplicative factor) thanks to Itô's description of $n$, for which we refer again to \cite{RY}. We conclude this paragraph with the following Markov property under $\n_+$. We set $\Fcal_t = \sigma(u(s), \, 0\le s\le t)$.
\begin{Prop} \label{prop:Markov}
On the event that $T_a:= \inf\{0\le t\le R(u), \, z(t)=a\}<\infty$, the process $(u(T_a+t)-u(T_a), 0\le t\le R(u)-T_a)$ is independent of $\Fcal_{T_a}$ and is a $d$-dimensional Brownian motion stopped when hitting $\{x_d=-a\}$.
\end{Prop}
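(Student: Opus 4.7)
The plan is to exploit the product structure of $\n_+$ from Proposition \ref{prop:excursion measure}, where $\n$ factorises as $n(\mathrm{d}z)$ tensored with the law of a $(d-1)$--dimensional Brownian motion of duration $R(z)$, independent of $z$. Writing $u=(u',z)$, the hitting time $T_a$ depends only on the last coordinate, i.e.\ $T_a = \inf\{0\le t\le R(z),\, z(t)=a\}$. This suggests that the proof should decouple along the two components: the classical Markov property of the It\^o measure at level $a$ for the last coordinate, and the usual strong Markov property of Brownian motion for the transverse $(d-1)$ coordinates.

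For the last coordinate, I would invoke the standard Markov property of the It\^o measure $n_+$ at level $a$: on $\{T_a < \infty\}$, for any bounded measurable $F,G$,
\[
n_+\big(F(z^{T_a})\,G(z(T_a+\cdot)-a)\,\mathds{1}_{\{T_a<\infty\}}\big) = n_+\big(F(z^{T_a})\,\mathds{1}_{\{T_a<\infty\}}\big)\cdot \Eb[G(\beta)],
\]
where $\beta$ is a real Brownian motion started at $0$ and killed upon hitting $-a$. This is a classical consequence of It\^o's description of $n$ (see Chapter XII of \cite{RY}): conditionally on $\{T_a<\infty\}$, the post-$T_a$ fragment of the positive excursion is, up to the obvious shift, a Brownian motion started at $a$ run until it returns to $0$.

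For the transverse part, by Proposition \ref{prop:excursion measure}, conditionally on $z$ the process $u'$ is a $(d-1)$--dimensional Brownian motion of duration $R(z)$, independent of $z$. Since $T_a$ is $\sigma(z)$--measurable, applying the usual strong Markov property of Brownian motion at the deterministic-in-$z$ time $T_a$ yields that $(u'(T_a+t)-u'(T_a),\, 0\le t\le R(z)-T_a)$ is, conditionally on $z$, a $(d-1)$--dimensional Brownian motion of duration $R(z)-T_a$ started at $0$, independent of $(u'(s),\, s\le T_a)$.

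Combining these two observations via the product representation in Proposition \ref{prop:excursion measure} and a Fubini argument, the shifted process $(u(T_a+t)-u(T_a),\, 0\le t\le R(u)-T_a)$ is independent of $\Fcal_{T_a}$ and distributed as $(\widetilde B^{d-1}_t,\beta_t)_{t\ge 0}$ with a common lifetime $R(z)-T_a$, where $\widetilde B^{d-1}$ is a $(d-1)$--dimensional Brownian motion and $\beta$ an independent real Brownian motion from $0$ killed at $-a$. Equivalently, this is a $d$--dimensional Brownian motion started at $0$ and stopped at the first hitting time of the hyperplane $\{x_d=-a\}$, which is the desired conclusion. The only mild subtlety --- not a real obstacle --- is handling the $\sigma$--finiteness of $\n_+$ and the random lifetime $R(z)$ in the two Markov arguments; both are absorbed by integrating first against $n_+(\mathrm{d}z)$ and then applying Fubini before invoking the probabilistic strong Markov property on the transverse part.
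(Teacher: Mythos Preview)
The paper does not supply a proof of this proposition; it is stated as a standard fact following the construction of $\n_+$, presumably because it is an immediate consequence of the product representation in \cref{prop:excursion measure} together with the classical Markov property of the one-dimensional It\^o measure. Your argument is correct and is exactly the natural way to justify the statement: since $T_a$ is $\sigma(z)$--measurable, conditionally on $z$ it acts as a deterministic time for the transverse $(d-1)$--dimensional Brownian motion, while the post-$T_a$ behaviour of $z$ under $n_+$ is governed by the standard Markov property of the It\^o measure (Chapter~XII of \cite{RY}). The Fubini step and the handling of the $\sigma$--finite measure are routine, as you note.
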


\medskip
\noindent \textbf{Disintegration of $\n_+$.} We now construct measures $\gamma_{\xbf}$, $\xbf\in\R^{d-1}$, for Brownian excursions from the hyperplane $\Hcal=\{x_d=0\}$ \emph{conditioned} on ending at $(v,0)$, by disintegrating $\n_+$ over its endpoint. Whenever $r\ge 0$ and $\xbf\in \R^{d-1}$, we write $\Pi_r$ for the law of a Bessel bridge from $0$ to $0$ over $[0,r]$, and $\Pbb{r}{0}{\xbf}$ for the law of a $(d-1)$--dimensional Brownian bridge from $0$ to $\xbf$ with duration $r$. See \cite{AD} for the  case $d=2$.

\begin{Prop} \label{prop:disintegration}
The following disintegration formula holds:
\[
\n_+ = \int_{\R^{d-1}\setminus \{0\}} \mathrm{d}\xbf \, \frac{\Gamma(\frac{d}{2})}{2\pi^{d/2}|\xbf|^{d}} \cdot \gamma_{\xbf},
\]
where $\gamma_{\xbf}$, $\xbf\in\R^{d-1}\setminus\{0\}$, are probability measures. In addition, for all $\xbf\in\R^{d-1}\setminus\{0\}$,
\[
\gamma_{\xbf} = \int_0^{\infty} \mathrm{d}r \frac{\mathrm{e}^{-\frac{1}{2r}}}{2^{\frac{d}{2}}\Gamma\left(\frac{d}{2}\right) r^{\frac{d}{2}+1}} \Pbb{r|\xbf|^2}{0}{\xbf} \otimes \Pi_{r|\xbf|^2}.
\]
\end{Prop}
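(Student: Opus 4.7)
The plan is to start from the description of $\n_+$ provided by \cref{prop:excursion measure} and to bring it to the desired form by disintegrating over (i) the common lifetime $r=R(u)$ and (ii) the endpoint $\xbf$ of the $(d-1)$--dimensional Brownian part, then performing a rescaling of the duration variable.

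\medskip
First, by \cref{prop:excursion measure}, the restriction $\n_+$ factorises as
\[
\n_+(\mathrm{d}u',\mathrm{d}z)= n_+(\mathrm{d}z)\, \Pb\bigl((B^{d-1})^{R(z)}\in \mathrm{d}u'\bigr),
\]
where $n_+$ is the Itô measure restricted to positive excursions. Itô's description of $n$ gives $n_+(R\in \mathrm{d}r)=\frac{1}{2\sqrt{2\pi}}\,r^{-3/2}\mathrm{d}r$, and conditionally on $R=r$ the last coordinate is distributed as $\Pi_r$, the $3$--dimensional Bessel bridge from $0$ to $0$ over $[0,r]$. On the other hand, conditionally on the lifetime $r$, the $(d-1)$--dimensional Brownian part has endpoint with Gaussian density $(2\pi r)^{-(d-1)/2}\mathrm{e}^{-|\xbf|^2/(2r)}$, and conditionally on its endpoint being $\xbf$ it is a Brownian bridge $\Pbb{r}{0}{\xbf}$. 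Combining these observations yields the representation
\[
\n_+ = \int_0^{\infty}\mathrm{d}r\int_{\R^{d-1}}\mathrm{d}\xbf\ \frac{1}{2\sqrt{2\pi}}\,r^{-3/2}\cdot (2\pi r)^{-(d-1)/2}\,\mathrm{e}^{-|\xbf|^2/(2r)}\ \Pbb{r}{0}{\xbf}\otimes\Pi_{r}.
\]

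\medskip
Second, in the inner $r$--integral (for fixed $\xbf\ne 0$) I would perform the change of variables $r=s|\xbf|^2$. The prefactor in front of the bridges becomes, after grouping powers of $|\xbf|$,
\[
\frac{1}{2\sqrt{2\pi}}\,(2\pi)^{-(d-1)/2}\,\frac{1}{|\xbf|^{d}}\cdot s^{-(d/2+1)}\,\mathrm{e}^{-1/(2s)},
\]
so Fubini gives
\[
\n_+ = \int_{\R^{d-1}\setminus\{0\}} \frac{\mathrm{d}\xbf}{|\xbf|^{d}}\,\frac{1}{2\sqrt{2\pi}\,(2\pi)^{(d-1)/2}}\int_0^{\infty}\mathrm{d}s\,\frac{\mathrm{e}^{-1/(2s)}}{s^{d/2+1}}\,\Pbb{s|\xbf|^2}{0}{\xbf}\otimes\Pi_{s|\xbf|^2}.
\]
A short arithmetic check shows $\bigl(2\sqrt{2\pi}\,(2\pi)^{(d-1)/2}\bigr)^{-1}=\frac{\Gamma(d/2)}{2\pi^{d/2}}\cdot\frac{1}{2^{d/2}\Gamma(d/2)}$, so that pulling the factor $\Gamma(d/2)/(2\pi^{d/2})$ out of the $s$--integral gives exactly the claimed expressions for $\n_+$ and for $\gamma_{\xbf}$.

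\medskip
Finally, to check that each $\gamma_{\xbf}$ is a probability measure, I would verify that the scalar total mass
\[
\int_0^{\infty}\frac{\mathrm{e}^{-1/(2s)}}{2^{d/2}\Gamma(d/2)\,s^{d/2+1}}\,\mathrm{d}s
\]
equals $1$: the substitution $u=1/(2s)$ turns it into $\frac{1}{\Gamma(d/2)}\int_0^\infty u^{d/2-1}\mathrm{e}^{-u}\,\mathrm{d}u=1$. No real obstacle is anticipated; the statement is essentially a Fubini/change-of-variable identity, and the only mildly delicate step is book-keeping of the powers of $|\xbf|$, $2$ and $\pi$ through the rescaling in order to identify the density $\Gamma(d/2)/(2\pi^{d/2}|\xbf|^{d})$ with its probabilistic interpretation (isotropy and self-similarity of the Brownian ingredients make the scaling $r\mapsto s|\xbf|^2$ the natural one to perform).
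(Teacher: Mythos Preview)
Your proof is correct and follows essentially the same route as the paper: factorise $\n_+$ via \cref{prop:excursion measure}, disintegrate over the lifetime using It\^o's description and then over the endpoint of $B^{d-1}$, and finally perform the change of variables $r\mapsto r/|\xbf|^2$. The only difference is cosmetic: you are slightly more explicit in verifying the constant bookkeeping and the normalisation of $\gamma_{\xbf}$ via the substitution $u=1/(2s)$, whereas the paper merely states the value of the integral.
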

\begin{proof}
The proposition follows from Theorem 3.3 in \cite{Bur}, but we rephrase it in our framework for completeness. Let $f:\Xscr^{d-1}\longrightarrow \R_+$ and $g:\Xscr_0\longrightarrow \R_+$ be two nonnegative measurable functions. Then by Proposition \ref{prop:excursion measure},
\[
\int_{U^+} f(u')g(z) \n_+(\mathrm{d}u', \mathrm{d}z) 
=
\int_{U^+} f(u')g(z) n_+(\mathrm{d}z) \Pb\Big((B^{d-1})^{R(z)}\in \mathrm{d}u'\Big).
\]
Then by Itô's description of $\n_+$ (see Chap. XII, Theorem 4.2 in \cite{RY}), we may split this integral over the duration $R(z)$:
\[
\int_{U^+} f(u')g(z) \n_+(\mathrm{d}u', \mathrm{d}z) 
=
\int_{0}^{\infty} \frac{\mathrm{d}r}{2\sqrt{2\pi r^3}} \Pi_r[g] \Eb[f((B^{d-1})^r)].
\]
We now condition on $B^{d-1}_r$, and we obtain
\[
\int_{U^+} f(u')g(z) \n_+(\mathrm{d}u', \mathrm{d}z) 
=
\int_{0}^{\infty} \frac{\mathrm{d}r}{2\sqrt{2\pi r^3}} \int_{\R^{d-1}} \mathrm{d}\xbf \frac{\mathrm{e}^{-\frac{|\xbf|^2}{2r}}}{(2\pi r)^{\frac{d-1}{2}}}  \Pi_r[g] \Ebb{r}{0}{\xbf}[f].
\]
Finally, we perform the change of variables $r\mapsto t = r/|x|^2$:
\[
\int_{U^+} f(u')g(z) \n_+(\mathrm{d}u', \mathrm{d}z) 
=
\int_{\R^{d-1}} \frac{\mathrm{d}\xbf}{|x|^d} \int_{0}^{\infty} \mathrm{d}t  \frac{\mathrm{e}^{-\frac{1}{2t}}}{2(2\pi)^{\frac{d}{2}} t^{\frac{d}{2}+1}}  \Pi_{t|\xbf|^2}[g] \Ebb{t|\xbf|^2}{0}{\xbf}[f].
\]
Since 
\[
\int_{0}^{\infty} \mathrm{d}t  \frac{\mathrm{e}^{-\frac{1}{2t}}}{2(2\pi)^{\frac{d}{2}} t^{\frac{d}{2}+1}}
=
\frac12 \pi^{-\frac{d}{2}}\Gamma\left(\frac{d}{2}\right),
\]
this gives that  $\gamma_{\xbf}$, for $\xbf\in\R^{d-1}\setminus\{0\}$, are probability measures, and the disintegration claim holds.
\end{proof}

\begin{figure} 
\bigskip
\begin{center}
\includegraphics[scale=0.8]{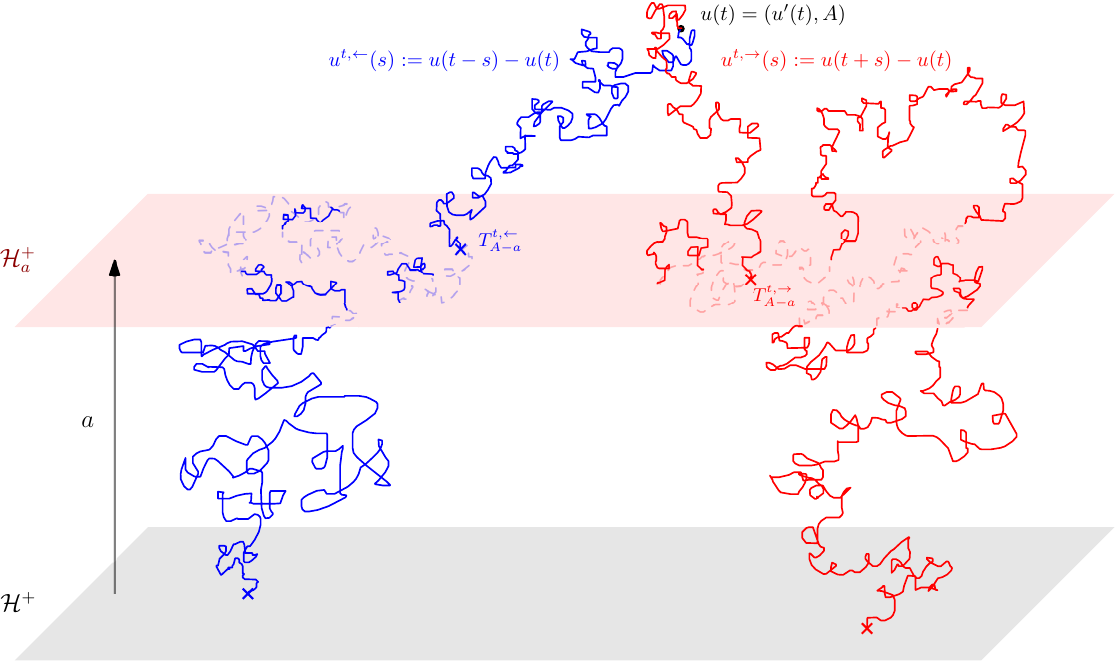}
\end{center}
\caption{Bismut's description of $\n_+$ in dimension $d=3$. The height of a uniformly chosen point $t$ on the excursion weighted by its duration is distributed according to the Lebesgue measure $\mathrm{d}A$. Moreover, conditionally on the height, the excursion splits into two independent trajectories depicted in blue and red. Both are distributed as Brownian motion killed when hitting the bottom half-plane (in grey).}
\label{fig:Bismut}
\end{figure}

\medskip
\noindent \textbf{Bismut's description of $\n_+$.} The following decomposition of $\n_+$ describes the left and right parts of the trajectory seen from a point chosen uniformly at random on the Brownian excursion weighted by its lifetime.
\begin{Prop} \label{prop:Bismut} (Bismut's description of $\n_+$)

\noindent Let $\overline{\n}_+$ be the measure defined on $\R_+\times U^+$ by
\[\overline{\n}_+(\mathrm{d}t,\mathrm{d}u) = \mathds{1}_{\{0\leq t\leq R(u)\}} \mathrm{d}t \, \n_+(\mathrm{d}u).\]
Then under $\overline{\n}_+$ the ``law'' of $(t,(u',z))\mapsto z(t)$ is the Lebesgue measure $\mathrm{d}A$ on $\R_+$, and conditionally on $z(t)=A$, $u^{t, \leftarrow}=\left(u(t-s)-u(t)\right)_{0\leq s\leq t}$ and $u^{t,\rightarrow}=\left(u(t+s)-u(t)\right)_{0\leq s\leq R(u)-t}$ are independent Brownian motions killed when reaching the hyperplane $\left\{x_d=-A\right\}$.
\end{Prop}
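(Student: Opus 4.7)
The plan is to reduce the spatial Bismut decomposition to the classical one-dimensional Bismut decomposition of the Itô excursion measure $n$, exploiting the product structure of $\n_+$ provided by Proposition \ref{prop:excursion measure}. Writing a generic $u = (u', z) \in U^+$ and sampling a point $t$ on the excursion according to its length, we can rewrite
\[
\overline{\n}_+(\mathrm{d}t, \mathrm{d}u', \mathrm{d}z) = \mathds{1}_{\{0 \le t \le R(z)\}} \mathrm{d}t \, n_+(\mathrm{d}z) \, \Pb\bigl((B^{d-1})^{R(z)}\in \mathrm{d}u'\bigr).
\]
The first step is then to apply the classical one-dimensional Bismut identity (see e.g. \cite{RY}, Chap. XII) to the factor $\mathds{1}_{\{0 \le t \le R(z)\}} \mathrm{d}t \, n_+(\mathrm{d}z)$: it asserts that under this measure, the law of $z(t)$ is Lebesgue measure $\mathrm{d}A$ on $\R_+$, and conditionally on $z(t) = A$, the two shifted trajectories $z^{t,\leftarrow}(s) = z(t-s) - z(t)$, $0\le s\le t$, and $z^{t,\rightarrow}(s) = z(t+s) - z(t)$, $0\le s\le R(z)-t$, are two independent real-valued Brownian motions started from $0$ and killed upon hitting $-A$.

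The second step treats the horizontal component. Conditionally on $R(z)$ and $t$, the path $(B^{d-1})^{R(z)}$ is a $(d-1)$-dimensional Brownian motion of length $R(z)$; splitting it at the (deterministic, given $R(z)$ and $t$) time $t$ and recentring as in the statement produces, by independence of Brownian increments together with time-reversal, two independent $(d-1)$-dimensional Brownian motions, of respective lengths $t$ and $R(z)-t$. Crucially, these first $d-1$ coordinates are independent of $z$ under $\Pb$, so they remain independent of everything carried by the $z$-factor after conditioning on $z(t) = A$ and the two half-trajectories $z^{t,\leftarrow}, z^{t,\rightarrow}$.

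Combining the two steps, conditionally on $z(t) = A$, the pairs $u^{t,\leftarrow} = ((B^{d-1})^{t, \leftarrow}, z^{t,\leftarrow})$ and $u^{t,\rightarrow} = ((B^{d-1})^{t,\rightarrow}, z^{t,\rightarrow})$ are independent, and each is a $d$-dimensional Brownian motion started from $0$ whose lifetime is precisely the hitting time of $-A$ by the last coordinate, which is exactly a $d$-dimensional Brownian motion killed when reaching the hyperplane $\{x_d = -A\}$. The main (very modest) obstacle is bookkeeping the conditioning: ensuring that the lengths of the two half-trajectories after sampling are consistently determined by $z^{t,\leftarrow}$ and $z^{t,\rightarrow}$, and that these lengths inherit the correct joint distribution so that the $(d-1)$-dimensional Brownian motion can be coherently split. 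This is taken care of by disintegrating $n_+$ first on $R(z)$, then using Fubini to exchange $\mathrm{d}t$ with the duration integral, and finally invoking the one-dimensional Bismut identity inside the integrand.
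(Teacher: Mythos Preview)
Your argument is correct and follows the same approach as the paper: the paper simply states that Proposition~\ref{prop:Bismut} is a straightforward consequence of Bismut's description of the one-dimensional It\^o measure $n$, and you have supplied precisely that reduction via the product structure of $\n_+$ from Proposition~\ref{prop:excursion measure}. Your write-up is in fact more detailed than what the paper provides.
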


\noindent Proposition \ref{prop:Bismut} is a straightforward consequence of Bismut's description of the one-dimensional Itô measure $n$. \cref{fig:Bismut} illustrates how the excursion splits when seen from a uniform point.


\subsection{Slicing excursions with hyperplanes} \label{sec: slicing excursions}
This section is an easy extension of the framework introduced in \cite{AD}. Let $u\in U^+$, and $a\ge 0$. We may write $u:=(u',z)$ with $u'\in \Xscr^{d-1}$ and $z\in \Xscr_0, z\ge 0$. 

\medskip
\noindent \textbf{Notation and setup.} 
Define the superlevel set 
\begin{equation} \label{eq:I(a) partition}
    \mathcal{I}(a) = \left\{s\in [0,R(u)], \; z(s)>a\right\}.
\end{equation} 
This is a countable (possibly empty) union of disjoint open intervals, and for any such interval $I=(i_-,i_+)$, we write $u_{I}(s) := u(i_- +s)-u(i_-), 0\leq s\leq i_+ -i_-,$ for the restriction of $u$ to $I$, and $\Delta u_I := x(i_+)-x(i_-)$. Remark that $\Delta u_I$ is a vector in the hyperplane $\Hcal_a :=\{x_d=a\}$, which we call the \emph{size} or \emph{length} of the excursion $u_I$, see \cref{fig:slicing excursion}. If $0\le t\le R(u)$, we denote by $e_a^{(t)}$ the excursion $u_I$ corresponding to the unique such interval $I$ which contains $t$. Moreover, we define $\Hcal_a^+$ as the set of excursions above $\Hcal_a$ corresponding to the previous partition of $\mathcal{I}(a)$.

We may now present an application of Proposition \ref{prop:Bismut}, which is similar to Proposition 2.7 in \cite{AD}. We show that, almost surely, excursions cut at heights do not make bubbles above any hyperplane. More precisely, we set 
\begin{equation}
    \mathscr{L} := \{u\in U^+, \; \exists 0\leq t\leq R(u), \; \exists 0\leq a<z(t), \; \Delta e_a^{(t)}(u) = 0\}. \label{eq: loop}
\end{equation}
This is the set of $u\in U^+$ making above some level an excursion which comes back to itself. Then
\begin{Prop} \label{prop: loop}
\[\n_+(\mathscr{L}) = 0.\]
\end{Prop}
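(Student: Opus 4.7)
The plan is to use Bismut's description of $\n_+$ (\cref{prop:Bismut}) to reduce the statement to the polarity of single points for isotropic Cauchy processes in $\R^{d-1}$, which applies since $d-1 \ge 2$.

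First, I would argue that it suffices to show
\[
\overline{\n}_+\big(\{(t,u) : \exists\, a \in [0, z(t)),\; \Delta e_a^{(t)}(u) = 0\}\big) = 0.
\]
Indeed, if $u \in \mathscr{L}$, pick bad $t^*$ and $a^*<z(t^*)$ and let $I^*$ be the connected component of $\mathcal{I}(a^*)$ containing $t^*$; every $t \in I^*$ is then also bad, so the $t$-section above has positive Lebesgue measure when $u \in \mathscr{L}$. Since $\overline{\n}_+(\mathrm{d}t,\mathrm{d}u) = \mathds{1}_{\{0 \le t \le R(u)\}}\mathrm{d}t\,\n_+(\mathrm{d}u)$ and $R(u)>0$ $\n_+$-a.e., Fubini then yields $\n_+(\mathscr{L})=0$.

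Next, I would apply Bismut's decomposition. Under $\overline{\n}_+$, the height $A := z(t)$ is Lebesgue-distributed on $\R_+$, and conditionally on $A$, the two halves $u^{t,\leftarrow}$ and $u^{t,\rightarrow}$ are independent $d$-dimensional Brownian motions killed upon hitting $\{x_d = -A\}$. For $a \in [0, A)$, set $b := A-a$ and let $\sigma^+(b)$ (resp.\ $\sigma^-(b)$) be the first hitting time of level $-b$ by the $d$-th coordinate of $u^{t,\rightarrow}$ (resp.\ $u^{t,\leftarrow}$). Tracking the endpoints of the excursion $e_a^{(t)}$ through the Bismut decomposition gives
\[
\Delta e_a^{(t)}(u) = (u^{t,\rightarrow})'(\sigma^+(b)) - (u^{t,\leftarrow})'(\sigma^-(b)),
\]
where the prime denotes the first $d-1$ coordinates.

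The key step is to identify the law of
\[
Y(b) := (u^{t,\rightarrow})'(\sigma^+(b)) - (u^{t,\leftarrow})'(\sigma^-(b)), \qquad b\in(0, A].
\]
The processes $(\sigma^{\pm}(b))_{b\ge 0}$ are independent $\tfrac{1}{2}$--stable subordinators arising from the two independent real Brownian motions $(u^{t,\pm})_d$, each independent of the transverse $(d-1)$--dimensional Brownian motions $(u^{t,\pm})'$. By the classical subordination argument, each $b \mapsto (u^{t,\pm})'(\sigma^{\pm}(b))$ is an isotropic Cauchy process in $\R^{d-1}$, so $Y$ is, up to scaling, an isotropic Cauchy process in $\R^{d-1}$ started at $0$. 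Since $d-1 \ge 2$, single points are polar for such a process, hence $\Pb(\exists b \in (0, A],\, Y(b) = 0) = 0$. Integrating over the Lebesgue distribution of $A$ and using Bismut yields the desired vanishing.

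I expect the main obstacle to be purely bookkeeping: one must carefully unpack Bismut's description and handle the path reversal in $u^{t,\leftarrow}$ to derive the formula for $\Delta e_a^{(t)}$. The identification of $Y$ as a Cauchy process relies on the classical fact that a Brownian motion subordinated to its own hitting-time subordinator is Cauchy; the killing at level $-A$ is harmless because, for $b\in(0,A]$, the stopping time $\sigma^{\pm}(b)$ coincides with the hitting time of the unkilled process. Note that the case $a=0$ is automatically covered: taking $b=A$ gives $\Delta e_0^{(t)} = u'(R(u))$, and the polarity of $\{0\}$ for the Cauchy process $Y$ up to and including $b=A$ ensures this endpoint is $\n_+$-a.e.\ nonzero, in agreement with \cref{prop:disintegration}.
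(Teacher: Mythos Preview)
Your proposal is correct and follows essentially the same route as the paper: reduce to $\overline{\n}_+$ via the observation that a looping excursion produces a set of bad times of positive Lebesgue measure, apply Bismut's decomposition to express $\Delta e_a^{(t)}$ as the difference of two independent subordinated Brownian motions, identify this as a Cauchy process, and conclude by polarity of points. The only cosmetic difference is that the paper argues polarity coordinate-wise (each entry is a one-dimensional symmetric Cauchy process, for which $0$ is polar), whereas you invoke polarity for the full $(d-1)$-dimensional isotropic Cauchy process using $d-1\ge 2$; both are valid and equally short.
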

\begin{proof}
We first notice that if $u\in \mathscr{L}$, then the set of $t$'s such that $\Delta e_a^{(t)}(u) = 0$ for some $0\le a<z(t)$ has positive Lebesgue measure. Therefore 
\begin{equation} \label{eq: inclusion loop}
\mathscr{L} \subset \left\{u\in U^+, \; \int_0^{R(u)} \mathds{1}_{\{\exists 0\le a<z(t), \; \Delta e_a^{(t)}(u) = 0\}} \mathrm{d}t >0\right\}.  
\end{equation}
Now using the notation in Proposition \ref{prop:Bismut}, and defining 
\[
T_a^{t,\leftarrow} := \inf\{s>0, z(t-s)=a\} \quad \text{and} \quad T_a^{t,\rightarrow} := \inf\{s>0, z(t+s)=a\},
\]
we get
\begin{align*}
    \n_+&\left(  \int_0^{R(u)} \mathds{1}_{\{\exists 0\le a<z(t), \; \Delta e_a^{(t)}(u) = 0\}} \mathrm{d}t \right) \\
    &= \overline{\n}_+\left(\{(t,u)\in \R_+\times U^+, \;  \exists 0\leq a<z(t), \; \Delta e_a^{(t)}(u) = 0\} \right) \\
    &= \overline{\n}_+\left(\{(t,u)\in \R_+\times U^+, \;  \exists 0\leq a<z(t), \; u^{t, \leftarrow}(T_a^{t,\leftarrow}) = u^{t, \rightarrow}(T_a^{t,\rightarrow}) \} \right).
\end{align*}
Bismut's description \ref{prop:Bismut} of $\n_+$ (see \cref{fig:Bismut}) finally gives
\[
\n_+\left(  \int_0^{R(u)} \mathds{1}_{\{\exists 0\le a<z(t), \; \Delta e_a^{(t)}(u) = 0\}} \mathrm{d}t \right)
=
\int_0^{+\infty} \mathrm{d}A \, \Pb\left(\exists 0<a\le A, B^{d-1}_1(T_a^1) = B^{d-1}_2(T_a^2) \right),
\]
where $B^{d-1}_1, B^{d-1}_2$ are independent $(d-1)$--dimensional Brownian motions, and $T_a^1, T_a^2$ are independent Brownian hitting times. It is now well-known that the entries of $B^{d-1}_1(T_a^1)$ and $B^{d-1}_2(T_a^2)$ are symmetric Cauchy processes in $a$. By independence, the entries of $B^{d-1}_1(T_a^1)-B^{d-1}_2(T_a^2)$ are also Cauchy processes, for which points are polar (see \cite{Ber}, Chap. II, Section 5). Hence 
\[
\n_+\left(  \int_0^{R(u)} \mathds{1}_{\{\exists 0\le a<z(t), \; \Delta e_a^{(t)}(u) = 0\}} \mathrm{d}t \right)
=
0.
\]
This yields that for $\n_+$--almost every excursion $u$, 
\[
\int_0^{R(u)} \mathds{1}_{\{\exists 0\le a<z(t), \; \Delta e_a^{(t)}(u) = 0\}} \mathrm{d}t = 0,
\]
and given the inclusion \eqref{eq: inclusion loop}, we infer that $\n_+(\mathscr{L})=0$.
\end{proof}

\medskip
\noindent \textbf{The branching property of excursions in $\Hcal_a^+$.} When cutting excursions with the hyperplanes $\Hcal_a$, the natural filtration is the one carrying the information below these hyperplanes. We call $(\Gcal_a, a\ge 0)$ this filtration, completed with the $\n_+$--negligible sets. More precisely, we first introduce the path $u^{<a}$ defined as $u^{< a}_t:=u_{\tau^{< a}_t}$ if $t< A(R(u))$ and $u^{< a}_t:= u(R(u))$ if $t=A(R(u))$, where
\begin{equation} \label{eq: A_t time change}
A(t):= \int_0^t \mathds{1}_{\{z(s) \leq a \}}\mathrm{d}s, \qquad \hbox{and} \qquad  \tau^{< a}_t:= \inf\{s> 0\,:\, A(s)>t\}.
\end{equation}
The filtration $(\Gcal_a, a\ge 0)$ is then the (completed) filtration generated by $u^{<a}$.

 Recall that we have set $T_a:= \inf\{0\le t\le R(u), \, z(t)=a\}$. Finally, we let $a>0$ and rank the excursions $(e_i^{a,+}, i\ge 1)$ in $\Hcal_a^+$ by descending order of the norm of their sizes $(\xbf^{a,+}_i, i\ge 1)$. Then the following branching property holds.

\begin{Prop} \label{prop:branching}
For all $A\in \Gcal_a$, and all nonnegative measurable functions $F_1,\ldots,F_k:U^+\rightarrow \R_+, k\ge 1,$ 
\[
\n_{+}\left(\mathds{1}_{\{T_a<\infty\}}\mathds{1}_A \prod_{i=1}^k F_i(e_i^{a,+}) \right)
=
\n_{+}\left(\mathds{1}_{\{T_a<\infty\}}\mathds{1}_A \prod_{i=1}^k \gamma_ {\xbf^{a,+}_i}[F_i]\right),
\]
and the same also holds under $\gamma_{\xbf}$ for all $\xbf\in \R^{d-1}\setminus \{0\}$.
\end{Prop}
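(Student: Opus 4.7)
The approach is to first establish the identity under $\n_+$ by combining the strong Markov property at $T_a$ (Proposition \ref{prop:Markov}) with standard excursion theory above $\Hcal_a$, and then transfer the result to $\gamma_{\xbf}$ using the disintegration in Proposition \ref{prop:disintegration}.

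On the event $\{T_a<\infty\}$, Proposition \ref{prop:Markov} shows that the re-centred post-$T_a$ path is a $d$-dimensional Brownian motion started at $\overline{0}$ and stopped at its hitting time of $\{x_d=-a\}$. Iterating the strong Markov property at the successive entrance and exit times of the half-space $\{x_d>0\}$ — or, equivalently, viewing its excursions above $\{x_d=0\}$ as a Poisson point process of intensity $\mathrm{d}\ell\otimes \n_+$ sampled up to the local time accumulated before $\{x_d=-a\}$ is hit — shows that, conditionally on the trace obtained by removing those excursions, the chronologically-ordered excursions above $\Hcal_a$ are independent. Each such excursion, translated to start at the origin, is a Brownian motion from $\overline{0}$ killed at its first return to $\Hcal$, with prescribed terminal displacement equal to the jump of $u^{<a}$ at the corresponding removal, which is $\Gcal_a$-measurable. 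Proposition \ref{prop:disintegration} identifies the conditional law of such an excursion given displacement $\xbf$ as $\gamma_{\xbf}$. Since the norm-ranking is a measurable operation on the $\Gcal_a$-measurable displacement sequence, the stated identity holds under $\n_+$ for the ranked sequence $(e_i^{a,+})_{i\ge 1}$.

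For the identity under $\gamma_{\xbf}$, observe that the terminal position $u(R(u))$ equals the terminal position of $u^{<a}$ and is therefore $\Gcal_a$-measurable. Applying the $\n_+$ version to the event $A\cap\{u(R(u))\in B\}$ for an arbitrary Borel set $B\subset \R^{d-1}\setminus\{0\}$ and invoking the disintegration $\n_+ = \int \mathrm{d}\xbf\, \tfrac{\Gamma(d/2)}{2\pi^{d/2}|\xbf|^d}\gamma_{\xbf}$ yields the desired equality under $\gamma_{\xbf}$ for Lebesgue-almost every $\xbf$; the joint continuity of $\xbf\mapsto \gamma_{\xbf}$ visible from the explicit formula in Proposition \ref{prop:disintegration} extends it to every $\xbf\ne 0$.

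\emph{Main obstacle.} The delicate step is the conditional independence argument in the first part, because $\n_+$ is $\sigma$-finite (not a probability measure) and the enumeration of excursions above $\Hcal_a$ is countable, with the ranking dictated by size rather than time. The cleanest execution is through Poissonian excursion theory for the post-$T_a$ Brownian motion combined with a disintegration of each Poisson atom over its endpoint; once this structural point is firmly settled, the remainder of the argument reduces to bookkeeping around the $\Gcal_a$-measurability of the displacements and the measurability of the norm-ranking.
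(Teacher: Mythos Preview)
Your proposal is correct and follows the same route the paper has in mind: the paper does not spell out a proof but simply refers to \cite{AD} for the planar case and notes that it extends to higher dimensions, and the argument you outline (Markov property at $T_a$, Poissonian excursion decomposition of the post-$T_a$ Brownian motion above level $0$, disintegration of each positive excursion over its endpoint via \cref{prop:disintegration}, then transfer to $\gamma_{\xbf}$ by disintegrating $\n_+$) is exactly that planar argument lifted to $\R^d$. One small wording point: the intensity of the excursion point process of the post-$T_a$ Brownian motion is $\mathrm{d}\ell\otimes \n$ rather than $\mathrm{d}\ell\otimes \n_+$, with the negative excursions contributing to $u^{<a}$ and the stopping rule, while only the positive atoms give the $e_i^{a,+}$; your subsequent reasoning is unaffected.
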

\begin{proof}
We refer to \cite{AD} for the proof in the planar case, which is easily extended to higher dimensions. 
\end{proof}

\begin{figure} 
\bigskip
\begin{center}
\includegraphics[scale=0.8]{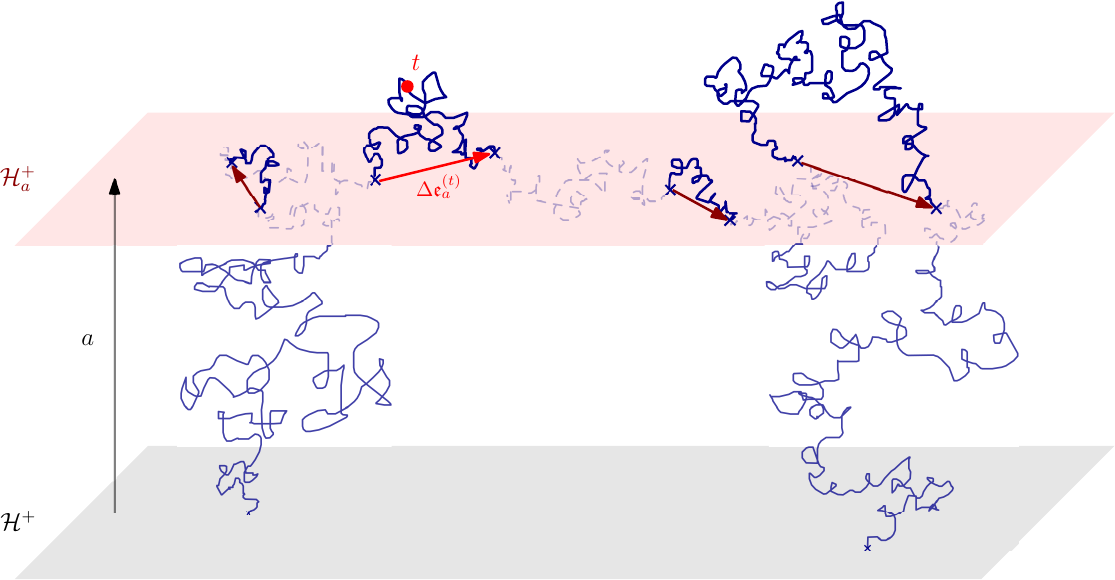}
\end{center}
\caption{Slicing at height $a$ of an excursion $u$ away from $\Hcal$. The blue trajectory represents an excursion in the half-space $\{x_d>0\}$, $d=3$. For some fixed height $a>0$ we draw the hyperplane $\Hcal_a$ and record the sub-excursions above $\Hcal_a$. The four largest of them are represented in dark blue (the reader should imagine many infinitesimal excursions). The red arrows indicate the \emph{size} of the sub-excursions, counted with respect to the orientation of $u$.} 
\label{fig:slicing excursion}
\end{figure}

\subsection{A many-to-one formula}
\label{sec: many-to-one}

We now establish a key formula in the description of the process of excursions cut at heights. We first need some notation and terminology. We argue on the event that $T_a<\infty$. Let $(\ell_t^a)_{t\in [0,R(u)]}$ (resp.  $(\uptau^{a}_s)_{s\in [0,\ell^a_{R(u)}]}$) be the (resp. inverse) local time process of $u$ at level $a$ and let $(\efrak_s^a,\, s\in(0,\ell_{R(u)}^a))$ be the excursion process at level $a$ of $u$. For convenience we also define $\efrak_0^a$ and $\efrak^a_{\ell_{R(u)}^a}$ respectively as the first and last bits of excursion between $\{x_d=0\}$ and $\{x_d=a\}$. A direct consequence of Proposition \ref{prop:Markov} is that, on the event $T_a<\infty$ and conditionally on $\Fcal_{T_a}$, $(\efrak_s^a,\, s\in(0,\ell_{R(u)}^a))$ forms a Poisson point process with intensity $\n_+$ for the filtration $(\Fcal_{\uptau_s^{a}}, 0\le s\le \ell^a_{R(u)})$, stopped at the first time when an excursion hits $\{x_d=-a\}$.

Recall that $\mathscr{X}$ stands for the set of finite duration continuous trajectories in $\R^d$. We set
\begin{align*}
{\mathfrak{u}}_1^s &:= \Big( u(t), \, t\in [0, \uptau^a_{s^-}]\Big),
 \\ 
{\mathfrak{u}}_2^s &:= \Big(u(R(u)-t),\, t \in [0, R(u)-\uptau^a_s] \Big).
 \end{align*}
In words, ${\mathfrak{u}}_1^s$ and ${\mathfrak{u}}_2^s$ are the two elements of $\Xscr$ which describe the trajectory of $u$ before the excursion $\efrak^{a}_s$ and for the time-reversed trajectory of $u$ after the excursion $\efrak^{a}_s$. Finally, we use the shorthand $s^+ \in [0, \ell_{R(u)}^a]$ to denote times $0\le s \le \ell_{R(u)}^a$ such that $\efrak_s^{a} \in U^+$. 

Our description involves two processes $\mathfrak{h}_1$ and $\mathfrak{h}_2$ defined as follows. Call Bessel-Brownian excursion a process in the half-space $\Hcal^+ := \{x_d>0\}$ whose first $d-1$ entries are independent Brownian motions, and whose last coordinate $\mathfrak{z}$ is an independent three-dimensional Bessel process starting at $0$. Under $\Pb$, take $\mathfrak{h}_1$ and $\mathfrak{h}_2=\mathfrak{h}_2^\xbf$ to be independent Bessel-Brownian excursions, with $\mathfrak{h}_1(0) = 0$ and $\mathfrak{h}_2(0) = (\xbf, 0)$. We write $S_i^a := \sup\{t\ge 0\,:\, {\mathfrak z}_i(t)\le a\}$ for the last passage time at $a$ of ${\mathfrak z}_i$, $i\in\{1,2\}$.

\begin{Prop} \label{prop: key many to one}
 Let $F: \Xscr \times \Xscr \rightarrow \R_+$ be a nonnegative measurable function. Then 
\begin{equation} \label{eq: many-to-one excursions}
\gamma_{\xbf}\bigg[\mathds{1}_{\{T_a<\infty\}}\sum_{ s^+ \in [0,\ell^a_{R(u)}]} |\Delta \efrak^{a}_s|^d  F({\mathfrak u}_1^s, {\mathfrak u}_2^s) \bigg]
=
|\xbf|^d \Eb\big[F\left(({\mathfrak h}_1(t),t\in [0,S^a_1]\right),\left({\mathfrak h}_2^{\xbf}(t),t\in [0,S^a_2])\right)\big].
\end{equation}
\end{Prop}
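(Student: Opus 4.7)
The plan is to reduce to an identity under $\n_+$ via the disintegration of \cref{prop:disintegration}, then combine Bismut's description (\cref{prop:Bismut}) with the strong Markov property and Williams' time-reversal theorem to identify the two halves with Bessel--Brownian excursions.

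Pairing both sides against an arbitrary bounded continuous $\phi:\R^{d-1}\to\R_+$, the disintegration formula reduces the problem to proving
\[
\n_+\!\bigg[\phi(\Delta u)\mathds{1}_{\{T_a<\infty\}}\sum_{s^+\in[0,\ell^a_{R(u)}]}|\Delta\efrak^a_s|^d F(\mathfrak u_1^s,\mathfrak u_2^s)\bigg]=\frac{\Gamma(d/2)}{2\pi^{d/2}}\int_{\R^{d-1}}\phi(\xbf)\,\Eb[F(\mathfrak h_1,\mathfrak h_2^\xbf)]\,\mathrm{d}\xbf,
\]
with pointwise equality for each $\xbf\ne 0$ following by a density/continuity argument. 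Turning the sum into the integral $\int_0^{R(u)}\frac{|\Delta e_a^{(t)}|^d}{R(e_a^{(t)})}\mathds{1}_{\{z(t)>a\}}F(\mathfrak u_1^{s(t)},\mathfrak u_2^{s(t)})\,\mathrm{d}t$ recasts the expectation under $\bar\n_+$, which, by Bismut's description, gives $A:=z(t)$ Lebesgue-distributed on $\R_+$ and, conditionally on $A$, two independent $d$-dimensional Brownian motions $u^{t,\leftarrow},u^{t,\rightarrow}$ from $0$ killed at $\{x_d=-A\}$.

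On $\{A>a\}$ the first-hitting times $r^*_\leftarrow,r^*_\rightarrow$ of $\{x_d=a-A\}$ by $u^{t,\leftarrow},u^{t,\rightarrow}$ are a.s.\ finite before killing. By the strong Markov property at these times, the pre-hit and post-hit portions split into independent pieces: the pre-hit ``small'' pieces reconstruct $e_a^{(t)}$ (its duration being $r^*_\leftarrow+r^*_\rightarrow$ and its $\R^{d-1}$-displacement the difference $X_\rightarrow-X_\leftarrow$ of the Gaussian first-hit positions), whereas the post-hit ``large'' pieces, after shifting by $+A$ in the $d$-th coordinate and by $u(t)$ in the first $d-1$ coordinates, together with a time-reversal, give exactly $\mathfrak u_1^{s(t)}$ and $\mathfrak u_2^{s(t)}$. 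Williams' time-reversal theorem (a one-dimensional Brownian motion from $a$ stopped at $0$ is the time reversal of a $3$-dimensional Bessel process from $0$ considered up to its last passage at $a$), combined with the time-symmetry and independence of the first $d-1$ Brownian coordinates, identifies these transformed large pieces with $\mathfrak h_1$ on $[0,S_1^a]$ and $\mathfrak h_2^\xbf$ on $[0,S_2^a]$, where $\xbf=\Delta u$ is the random endpoint of $u$.

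The remaining step is the bookkeeping: integrating out the small-piece data $(A,r^*_\leftarrow,r^*_\rightarrow,X_\leftarrow,X_\rightarrow)$ against the weight $|X_\rightarrow-X_\leftarrow|^d/(r^*_\leftarrow+r^*_\rightarrow)$, using the first-hit-time density $\tfrac{(A-a)\mathrm{e}^{-(A-a)^2/(2r)}}{\sqrt{2\pi r^3}}$ and the Gaussian densities of $X_{\leftarrow/\rightarrow}$, and then changing variables to $\xbf$ through the Poisson-kernel identity
\[
\int_0^\infty\frac{a\,\mathrm{e}^{-a^2/(2r)}\mathrm{e}^{-|\xbf|^2/(2r)}}{\sqrt{2\pi r^3}\,(2\pi r)^{(d-1)/2}}\,\mathrm{d}r=\frac{\Gamma(d/2)}{\pi^{d/2}}\frac{a}{(|\xbf|^2+a^2)^{d/2}}
\]
for the upper half-space, one should recover exactly the prefactor $\Gamma(d/2)/(2\pi^{d/2})$ and the Lebesgue measure $\mathrm{d}\xbf$. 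The precise cancellation of the weight $|\Delta|^d/R$ against the Jacobian of this change of variables is the main obstacle, and it is here that the dimension $d$ of the exponent in $|\Delta\efrak^a_s|^d$ enters decisively, matching the $|\xbf|^{-d}$ density of the disintegration in \cref{prop:disintegration}.
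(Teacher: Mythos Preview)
Your approach via Bismut is viable but takes a genuinely different route from the paper, and the step you yourself flag as ``the main obstacle'' is left as an assertion rather than carried out.

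The paper does \emph{not} use Bismut's description here. Instead it works directly with the Poisson point process $(\efrak^a_s)_{s>0}$ of excursions at level $a$: on $\{T_a<\infty\}$ and conditionally on $\Fcal_{T_a}$, this is a PPP with intensity $\n$, stopped at the first excursion hitting $\{x_d=-a\}$. The \emph{master formula} for this PPP turns the sum into a local-time integral $\int_0^{R(u)}f(\mathfrak u_1^{\ell^a_r})\,\mathrm{d}\ell^a_r$ times an $\n_+$--integral over the piece $\mathfrak u_2$. Disintegrating $\n_+$ over its endpoint via \cref{prop:disintegration} and performing the one-line translation $\xbf\mapsto\xbf-B^{d-1}(r)$ decouples the two factors immediately; Williams' reversal identifies the right piece with $\mathfrak h_2^{\xbf}$, and a second application of the master formula plus the law of $u$ under $\n_+(\,\cdot\mid T_a<\infty)$ identifies the left piece with $\mathfrak h_1$. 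No explicit density integration over $(A,r^*_\leftarrow,r^*_\rightarrow,X_\leftarrow,X_\rightarrow)$ is needed, and the duration weight $1/R(e_a^{(t)})$ never appears.

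Your bookkeeping step is more entangled than your write-up suggests. The endpoint $\xbf=\Delta u$ is not a function of the small-piece data alone: one has $\Delta u=(X_\rightarrow-X_\leftarrow)+(V_e-W'_e)$, where $V_e,W'_e$ are the $\R^{d-1}$--endpoints of the (reversed, recentred) large pieces. Hence you cannot ``integrate out the small-piece data and then change variables to~$\xbf$''; the correct order is to fix $V,W'$, change variables $\mathbf y:=X_\rightarrow-X_\leftarrow\mapsto\xbf$, and then verify that
\[
\int_a^\infty\!\mathrm{d}A\int_0^\infty\!\int_0^\infty
\frac{(A-a)^2\,\mathrm{e}^{-(A-a)^2/(2r_\leftarrow)-(A-a)^2/(2r_\rightarrow)}}{2\pi(r_\leftarrow r_\rightarrow)^{3/2}}\cdot
\frac{\mathrm{e}^{-|\mathbf y|^2/(2(r_\leftarrow+r_\rightarrow))}}{(2\pi(r_\leftarrow+r_\rightarrow))^{(d-1)/2}}\cdot\frac{|\mathbf y|^d}{r_\leftarrow+r_\rightarrow}\,\mathrm{d}r_\leftarrow\,\mathrm{d}r_\rightarrow
\]
is actually a constant independent of $\mathbf y=\xbf-V_e+W'_e$. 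This is true (using $r_\leftarrow+r_\rightarrow\stackrel{d}{=}T_{2(A-a)}$ and a Gamma integral, it equals $\Gamma(d/2)/(2\pi^{d/2})$), and then everything decouples as you claim. But this computation is the heart of the argument on your route and is missing; the paper's master-formula approach sidesteps it entirely.
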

\begin{proof}
The proof essentially follows from that of \cite[Equation (17)]{AD}. It suffices to prove \eqref{eq: many-to-one excursions} for $F(u,v)=f(u)g(v)$ with $f,g: \Xscr\rightarrow \R_+$ two measurable functions. We first deal with the left-hand side under the measure $\n_+$. By the master formula \cite[Proposition XII.1.10]{RY} for the Poisson point process $(\efrak_s^a,\, s\in(0,\ell_{R(u)}^a))$ and the disintegration property (\cref{prop:disintegration}),
\begin{align}
&\n_+\bigg( \mathds{1}_{\{T_a<\infty\}}\sum_{s^+ \in [0,\ell_{R(u)}^a]} f(\mathfrak{u}_1^s) g(\mathfrak{u}_2^s) |\Delta \efrak^{a}_s|^d
 \bigg)
 \label{beginning H excursion}\\
&= \n_+\bigg(\mathds{1}_{\{T_a<\infty\}} \int_0^{R(u)} f\left(\mathfrak{u}_1^{\ell_r^a}\right)
 \mathrm{d}\ell_r^a \nonumber \\
 & \int_{\R^{d-1}\setminus\{0\}}  \mathrm{d}\xbf \frac{\Gamma(\frac{d}{2})}{2\pi^{d/2}} \, \Eb\left[ g(\xbf+\xbf'+B^{d-1}(T^Z_{-a}-s),a+Z(T^Z_{-a}-s), 0\le s\le T^Z_{-a})\right]_{\xbf'=B^{d-1}(r)} \bigg), \nonumber 
\end{align}
where $T^Z_{-a} = \inf\{t>0, \; Z(t) = -a\}$. The change of variables $\xbf+B^{d-1}(r) \mapsto \xbf$ then shows that
\begin{multline} \label{eq: key formula change variables}
\n_+\bigg( \mathds{1}_{\{T_a<\infty\}}\sum_{s^+ \in [0,\ell_{R(u)}^a]} f(\mathfrak{u}_1^s) g(\mathfrak{u}_2^s) |\Delta \efrak^{a}_s|^d
 \bigg) 
 = 
\n_+\bigg(\mathds{1}_{\{T_a<\infty\}} \int_0^{R(u)} f\left(\mathfrak{u}_1^{\ell_r^a}\right)
 \mathrm{d}\ell_r^a \bigg) 
 \\
\cdot  \int_{\R^{d-1}\setminus\{0\}}  \frac{\Gamma(\frac{d}{2})}{2\pi^{d/2}} \mathrm{d}\xbf \Eb\left[ g(\xbf+B^{d-1}(T^Z_{-a}-s),a+Z(T^Z_{-a}-s), 0\le s\le T^Z_{-a})\right].
\end{multline}
We first argue conditionally on $Z$, where $(B^{d-1}(s),\, 0\le s\le T^Z_{-a})$ is a $(d-1)$--dimensional Brownian motion stopped at time $T^Z_{-a}$. By reversibility of Brownian motion with respect to the Lebesgue measure on $\R^{d-1}$, the ``law'' of $(\xbf+B^{d-1}_{T^Z_{-a}-s},\, 0\le s \le T^Z_{-a})$ for $\xbf$ sampled from the Lebesgue measure is that of a Brownian motion with initial measure the Lebesgue measure in $\R^{d-1}$, stopped at time $T^Z_{-a}$. Secondly, it is standard that the process $(a+Z(T^Z_{-a}-s), 0\le s\le T^Z_{-a})$ is a 3-dimensional Bessel process starting from $0$ and run until its last passage time at $a$ (see for instance \cite[Corollary VII.4.6]{RY}). The last integral in the above display therefore boils down to
\begin{multline*}
\int_{\R^{d-1}\setminus\{0\}} \frac{\Gamma(\frac{d}{2})}{2\pi^{d/2}} \mathrm{d}\xbf  \Eb\left[ g(\xbf+B^{d-1}(T^Z_{-a}-s),a+Z(T^Z_{-a}-s), 0\le s\le T^Z_{-a})\right]
\\
=
\int_{\R^{d-1}\setminus\{0\}} \frac{\Gamma(\frac{d}{2})}{2\pi^{d/2}} \mathrm{d}\xbf  \, \Eb\Big[ g({\mathfrak h}_2^{\xbf}(t),t\in [0,S_2^a])\Big].
\end{multline*}
Going back to \eqref{eq: key formula change variables}, we have
\begin{multline} \label{eq: key formula Bessel}
\n_+\bigg( \mathds{1}_{\{T_a<\infty\}}\sum_{s^+ \in [0,\ell_{R(u)}^a]} f(\mathfrak{u}_1^s) g(\mathfrak{u}_2^s) |\Delta \efrak^{a}_s|^d
 \bigg) \\
 =
\n_+\bigg(\mathds{1}_{\{T_a<\infty\}} \int_0^{R(u)} f\left(\mathfrak{u}_1^{\ell_r^a}\right)
 \mathrm{d}\ell_r^a \bigg) 
\cdot  \int_{\R^{d-1}\setminus\{0\}} \frac{\Gamma(\frac{d}{2})}{2\pi^{d/2}} \mathrm{d}\xbf  \, \Eb\Big[ g({\mathfrak h}_2^{\xbf}(t),t\in [0,S_2^a])\Big].
\end{multline}
On the other hand, by another application of the master formula \cite[Proposition XII.1.10]{RY},
\[
\n_+\bigg( \mathds{1}_{\{T_a<\infty\}} f\left(\mathfrak{u}_1^{\ell_{R(u)}^a}\right)
  \bigg)  = \n_+\bigg(\mathds{1}_{\{T_a<\infty\}} \int_0^{R(u)} f(\mathfrak{u}_1^{\ell_r^a}) \mathrm{d}\ell_r^a  \bigg)\n_-(T_{-a}<\infty). 
\]
Since $\n_-(T_{-a}<\infty) = \n_+(T_{a}<\infty)$, we conclude that 
\[
\n_+\bigg(\mathds{1}_{\{T_a<\infty\}} \int_0^{R(u)} f(\mathfrak{u}_1^{\ell_r^a}) \mathrm{d}\ell_r^a  \bigg) 
=
 \n_+\left(  f\left(\mathfrak{u}_1^{\ell_{R(u)}^a}\right) \Big| \, T_a <\infty\right).
\]
Finally, under $\n^\alpha_+(\cdot \mid T_a<\infty)$, $u$ up to its last passage time at $a$ has the law of ${\mathfrak h}_1$ up to $S_1^a$. Hence \eqref{eq: key formula Bessel} becomes
\begin{multline*}
\n_+\bigg( \mathds{1}_{\{T_a<\infty\}}\sum_{s^+ \in [0,\ell_{R(u)}^a]} f(\mathfrak{u}_1^s) g(\mathfrak{u}_2^s) |\Delta \efrak^{a}_s|^d
 \bigg) \\
 =
\Eb\Big[ f({\mathfrak h}_1(t),t\in [0,S_1^a]) \Big]
\int_{\R^{d-1}\setminus\{0\}} \frac{\Gamma(\frac{d}{2})}{2\pi^{d/2}} \mathrm{d}\xbf  \, \Eb\Big[ g({\mathfrak h}_2^{\xbf}(t),t\in [0,S_2^a])\Big].
\end{multline*}

It remains to disintegrate $\n_+$ over the endpoint using \cref{prop:disintegration}:
\begin{multline*}
\int_{\R^{d-1}\setminus\{0\}}\frac{\Gamma(\frac{d}{2})}{2\pi^{d/2}|\xbf|^d} \mathrm{d}\xbf \, \gamma_{\xbf} \bigg( \mathds{1}_{\{T_a<\infty\}}\sum_{s^+ \in [0,\ell_{R(u)}^a]} f(\mathfrak{u}_1^s) g(\mathfrak{u}_2^s) |\Delta \efrak^{a}_s|^d
 \bigg) \\
 =
\Eb\Big[ f({\mathfrak h}_1(t),t\in [0,S_1^a]) \Big]
\int_{\R^{N-1}\setminus\{0\}} \frac{\Gamma(\frac{d}{2})}{2\pi^{d/2}} \mathrm{d}\xbf  \, \Eb\Big[ g({\mathfrak h}_2^{\xbf}(t),t\in [0,S_2^a])\Big].
\end{multline*}
One can then multiply $g$ by an arbitrary function of the endpoint. This entails that for Lebesgue--almost every $\xbf\in\R^{d-1}\setminus\{0\}$,
\[
\gamma_{\xbf} \bigg( \mathds{1}_{\{T_a<\infty\}}\sum_{s^+ \in [0,\ell_{R(u)}^a]} f(\mathfrak{u}_1^s) g(\mathfrak{u}_2^s) |\Delta \efrak^{a}_s|^d
 \bigg) \\
 =
|\xbf|^d \Eb\Big[ f({\mathfrak h}_1(t),t\in [0,S_1^a]) \Big]  \cdot \Eb\Big[ g({\mathfrak h}_2^{\xbf}(t),t\in [0,S_2^a])\Big].
\]
This proves \eqref{eq: many-to-one excursions} for almost every $\xbf$. By a continuity argument that we feel free to skip, \eqref{eq: many-to-one excursions} remains true for all $\xbf\in\R^{d-1}\setminus\{0\}$.
\end{proof}

\subsection{A change of measures}
Recall from (\ref{eq:I(a) partition}) the notation $\Hcal_a^+$ for the set of excursions above $\Hcal_a$.
\begin{Thm} \label{thm:martingale}
Under $\gamma_{\xbf}$ for all $\xbf\in \R^{d-1}\setminus\{0\}$, the process
\[ 
\Mcal_a := \mathds{1}_{\{T_{a}<\infty\}} \cdot \sum_{e\in \Hcal_a^+} |\Delta e|^{d}, \quad a\ge 0,
\]
is a martingale with respect to $(\Gcal_a, a\ge 0)$.
\end{Thm}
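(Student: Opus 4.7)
The plan is to reduce the martingale identity to a one-point mean computation, and then to derive that mean from the many-to-one formula of \cref{prop: key many to one}.

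Fix $0 \le a < b$ and $\xbf \ne 0$. The first step is to use the branching property at level $a$ (\cref{prop:branching}). On the event $\{T_a < \infty\}$, under $\gamma_{\xbf}$ and conditionally on $\Gcal_a$, the excursions $(e_i^{a,+})_{i \ge 1}$ above $\Hcal_a$ are independent, with $e_i^{a,+}$ distributed as $\gamma_{\xbf_i^{a,+}}$. The slicing at height $b$ in $u$ corresponds, after translating the last coordinate by $-a$, to the slicing of each $e_i^{a,+}$ at height $b - a$, and the displacements of the excursions in $\Hcal_b^+$ are exactly the displacements of the excursions above $\Hcal_{b-a}$ inside each $e_i^{a,+}$. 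Therefore, on $\{T_a<\infty\}$,
\[
\gamma_{\xbf}\bigl[\Mcal_b \,\big|\, \Gcal_a\bigr]
= \sum_{i\ge 1} \gamma_{\xbf_i^{a,+}}[\Mcal_{b-a}],
\]
while on $\{T_a = \infty\}$ both sides vanish since $T_b \ge T_a$. Consequently, the martingale property reduces to the single identity
\[
\gamma_{\xbf}[\Mcal_a] = |\xbf|^d \qquad \text{for all } a \ge 0, \; \xbf \in \R^{d-1}\setminus\{0\},
\]
since plugging this in yields $\sum_i |\xbf_i^{a,+}|^d = \Mcal_a$ on $\{T_a<\infty\}$.

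The second (and main) step is to establish this mean identity. I will apply the many-to-one formula \cref{prop: key many to one} with the trivial choice $F \equiv 1$. The right-hand side collapses to $|\xbf|^d$, so
\[
\gamma_{\xbf}\Bigg[\mathds{1}_{\{T_a<\infty\}} \sum_{s^+ \in [0,\ell^a_{R(u)}]} |\Delta \efrak^a_s|^d \Bigg] = |\xbf|^d.
\]
It remains to observe that the left-hand side is exactly $\gamma_{\xbf}[\Mcal_a]$: the excursions $\efrak^a_s$ of $u$ at level $a$ that belong to $U^+$ (indexed by $s^+$) are, by definition, precisely the sub-excursions of $u$ above $\Hcal_a$, i.e.\ the elements of $\Hcal_a^+$, and $|\Delta \efrak^a_s|$ is the norm of the corresponding displacement $\Delta e_i^{a,+}$ in $\R^{d-1}$. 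The boundary bits $\efrak_0^a$ and $\efrak^a_{\ell^a_{R(u)}}$ that were excluded from the definition of the excursion process travel between $\{x_d = 0\}$ and $\{x_d = a\}$ and hence never contribute to the $U^+$-restricted sum.

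Combining the two steps yields $\gamma_{\xbf}[\Mcal_b \mid \Gcal_a] = \Mcal_a$, proving the martingale property. The only non-routine ingredient is the mean identity $\gamma_{\xbf}[\Mcal_a] = |\xbf|^d$, and this has essentially been prepared in advance: all of the work is encoded in \cref{prop: key many to one}, where the delicate time-reversal and Bessel-bridge arguments have been carried out. The remaining verifications—matching $\sum_{s^+} |\Delta \efrak_s^a|^d$ with $\sum_{e \in \Hcal_a^+} |\Delta e|^d$, and unpacking the branching property on $\Gcal_a$—are purely notational.
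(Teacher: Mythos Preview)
Your proof is correct and follows exactly the paper's approach: reduce to the mean identity $\gamma_{\xbf}[\Mcal_a]=|\xbf|^d$ via the branching property (\cref{prop:branching}), and then read off that identity from \cref{prop: key many to one} with $F\equiv 1$. You have simply written out in detail the two steps that the paper compresses into two sentences.
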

\begin{proof}
By the branching property, it is enough to check that $\gamma_\xbf[\Mcal_a] =|\xbf|^d$. The claim then follows directly from \cref{prop: key many to one} by taking $F=1$.
\end{proof}

\medskip

We fix $\xbf\in\R^{d-1}\setminus\{0\}$.  To the martingale in Theorem \ref{thm:martingale}, we can associate the following change of measures. Recall from \eqref{eq: A_t time change} the notation $u^{<a}$. Define on the same probability space the process $({\mathfrak U}_a^\xbf,\, a> 0)$ such that for any $a> 0$, the law of ${\mathfrak U}_a^\xbf$ is that of $u^{<a}$ under the probability measure $|\xbf|^{-d} {\mathcal M}_a \mathrm{d}\gamma_\xbf$. The existence of  ${\mathfrak U}^\xbf$ results from Kolmogorov's extension theorem (the martingale property of $\Mcal$ makes this definition consistent). Our goal is to describe the law of $({\mathfrak U}_a^\xbf, a\ge 0)$. The construction involves the two processes $\mathfrak{h}_1$ and $\mathfrak{h}_2$ of \cref{sec: many-to-one}. As in \eqref{eq: A_t time change}, we introduce
\begin{equation} \label{eq: A_i(t) time change}
A_i(t):= \int_0^t \mathds{1}_{\{{\mathfrak z}_i(s) \leq a \}}\mathrm{d}s, \quad  \tau_i(t):= \inf\{s> 0\,:\, A_i(s)>t\}, \quad \text{for} \; i\in\{1,2\},
\end{equation}
and we also set 
\[
A_i(\infty) 
=
\lim_{t\to\infty} A_i(t), 
\quad i\in\{1,2\}.
\]
Under $\Pb$, we now define $\widetilde {\mathfrak U}_a^{\xbf}$ as the process obtained by concatenating $\mathfrak{h}_1$ and $\mathfrak{h}_2$ when they leave $\{x_d\le a\}$ forever, and removing everything above level $a$. More precisely, 
\[
\widetilde {\mathfrak U}_a^{\xbf}(t)
:=
\left\{
\begin{array}{ll}
{\mathfrak h}_1(\tau_1(t)) &\hbox{ if } t \in [0,A_1(\infty)), \\
{\mathfrak h}_2(\tau_2(A_1(\infty)+A_2(\infty)-t)) &\hbox{ if } t \in [A_1(\infty),A_1(\infty)+A_2(\infty)],
\end{array}
\right. 
\]
with the convention that ${\mathfrak h}_2(\tau_2(A_2(\infty))) := {\mathfrak h}_2(\tau_2(A_2(\infty))^-)$. We are now ready to describe the law of ${\mathfrak U}^\xbf$.
\begin{Thm} \label{thm:law change measures}
For any $z\ne 0$, the process $({\mathfrak U}_a^\xbf,a>0)$ is distributed as $(\widetilde {\mathfrak U}_a^\xbf,a>0)$.  
\end{Thm}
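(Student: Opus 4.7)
The strategy is to reduce the identity to the many-to-one formula of \cref{prop: key many to one}. The crucial observation is that for any positive excursion $\mathfrak{e}_s^a$ (indexed by $s^+$), the below-level-$a$ trajectory $u^{<a}$ is a \emph{deterministic functional} of the pre/post-spine pieces $\mathfrak{u}_1^s$ and $\mathfrak{u}_2^s$. Indeed, since $\mathfrak{e}_s^a$ lies entirely above $\Hcal_a$, one can write
\[
u^{<a} = (\mathfrak{u}_1^s)^{<a} \; \oplus \; [\text{jump of size } \Delta \mathfrak{e}_s^a] \; \oplus \; \text{time-reverse of } (\mathfrak{u}_2^s)^{<a},
\]
where each ``$<a$'' operation removes the other above-$a$ excursions of $\mathfrak{u}_1^s$ or $\mathfrak{u}_2^s$ and replaces them by their own horizontal jumps at level $a$. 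The displacement of the spine is recovered from the endpoints, namely $\Delta \mathfrak{e}_s^a = \mathfrak{u}_2^s(R(u)-\uptau_s^a) - \mathfrak{u}_1^s(\uptau_{s^-}^a)$, so that there exists a measurable functional $\Phi$, independent of the choice of $s^+$, with $u^{<a} = \Phi(\mathfrak{u}_1^s, \mathfrak{u}_2^s)$.

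Given a nonnegative measurable functional $F$ on $\Xscr$, using $\Mcal_a = \mathds{1}_{\{T_a<\infty\}}\sum_{s^+} |\Delta \mathfrak{e}_s^a|^d$, we compute
\[
\gamma_{\xbf}[\Mcal_a F(u^{<a})]
= \gamma_{\xbf}\bigg[\mathds{1}_{\{T_a<\infty\}} \sum_{s^+ \in [0,\ell^a_{R(u)}]} |\Delta \mathfrak{e}_s^a|^d F(\Phi(\mathfrak{u}_1^s, \mathfrak{u}_2^s)) \bigg].
\]
Applying \cref{prop: key many to one} with $G(v_1,v_2) := F(\Phi(v_1,v_2))$ gives
\[
\gamma_{\xbf}[\Mcal_a F(u^{<a})]
= |\xbf|^d \, \Eb\Big[F\big(\Phi(\mathfrak{h}_1|_{[0,S_1^a]}, \mathfrak{h}_2^{\xbf}|_{[0,S_2^a]})\big)\Big].
\]
Comparing $\Phi$ with the explicit construction of $\widetilde{\mathfrak{U}}_a^{\xbf}$ in the statement---a concatenation of $\mathfrak{h}_1$ below level $a$ (time-changed by $\tau_1$), a jump at $t=A_1(\infty)$, and the time-reverse of $\mathfrak{h}_2^{\xbf}$ below level $a$ (time-changed by $\tau_2$)---we recognize $\Phi(\mathfrak{h}_1|_{[0,S_1^a]}, \mathfrak{h}_2^{\xbf}|_{[0,S_2^a]}) = \widetilde{\mathfrak{U}}_a^{\xbf}$. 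Combining this with the very definition of $\mathfrak{U}_a^{\xbf}$ via the change of measure yields $\mathfrak{U}_a^{\xbf} \overset{d}{=} \widetilde{\mathfrak{U}}_a^{\xbf}$ for every fixed $a>0$.

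To upgrade from marginals to the process-level statement, one checks the pathwise consistency $(\widetilde{\mathfrak{U}}_{a_2}^{\xbf})^{<a_1} = \widetilde{\mathfrak{U}}_{a_1}^{\xbf}$ for $0<a_1<a_2$. This is direct from the definitions of the time-changes $\tau_i^{(a)}$: applying the ``$<a_1$'' operation to $\widetilde{\mathfrak{U}}_{a_2}^{\xbf}$ removes the pieces of $\mathfrak{h}_1$ and $\mathfrak{h}_2^{\xbf}$ in $(a_1, a_2]$, and the big jump at level $a_2$ is absorbed into a jump at level $a_1$ between the last below-$a_1$ points of $\mathfrak{h}_1$ and $\mathfrak{h}_2^{\xbf}$. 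Combined with the intrinsic consistency of $\mathfrak{U}^{\xbf}$ built into its definition via Kolmogorov's extension theorem, this extends the marginal equality to all finite-dimensional distributions, and hence to the process level. The main difficulty lies in making the first step rigorous: the functional $\Phi$ must correctly keep track of \emph{all} the horizontal jumps in $u^{<a}$ arising from above-$a$ excursions of $\mathfrak{u}_1^s$ and $\mathfrak{u}_2^s$, and not merely the displacement $\Delta \mathfrak{e}_s^a$ of the chosen spine excursion.
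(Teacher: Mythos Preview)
Your proposal is correct and follows essentially the same route as the paper: the paper's proof is a single line stating that the result ``follows readily from \cref{prop: key many to one} by taking $F(\mathfrak{u}_1^s, \mathfrak{u}_2^s)$ as a measurable function of $u^{<a}$'', which is precisely your argument via the functional $\Phi$. Your write-up simply unpacks this in more detail, including the pathwise consistency check for the process-level statement, which the paper leaves implicit.
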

\noindent Performing the change of measure thus results in splitting the excursion into two independent excursions in the half-space $\Hcal^+$ going to infinity, as in \cref{fig:change measures}.

\medskip
\begin{proof}
This follows readily from \cref{prop: key many to one} by taking $F(\mathfrak{u}_1^s, \mathfrak{u}_2^s)$ as a measurable function of $u^{<a}$.
\end{proof}

\subsection{Proof of Theorem \ref{thm:exc}}
We reformulate the previous results in the parlance of Section \ref{sec: spatial GF}. Setting 
\[
\mathbf{Z}(a) := \left\{\left\{ \Delta e, \; e\in\Hcal_a^+ \right\}\right\}, \quad a\ge 0,
\]
it follows from \cref{prop:branching} that $\mathbf{Z}$ enjoys a branching property akin to \cref{prop: temporal branching spatial GF}. We could have pointed out an Eve cell in the spirit of \cite[Theorem 3.3]{AD} by considering the locally largest excursion. Together with an avatar of \cite[Theorem 3.6]{AD}, this proves that under $\gamma_{\xbf}$, $\mathbf{Z}$ is a \emph{spatial growth-fragmentation process}. Actually, one should first check that the evolution of the Eve cell generates all the excursions, but this is a simple consequence of the arguments presented in \cite[Theorem 4.1]{AD}. In the previous exposition, we chose to rather dwell on the spine description. More specifically, the martingale in \cref{thm:martingale} is a temporal version of the martingale in \cref{thm:M(n)}. Then, \cref{thm:law change measures} determines the law of the spine without reference to \cref{thm:spine}. The spine is described as the Brownian motion $B^{d-1}$ taken at the hitting times of another independent linear Brownian motion, and hence is a $(d-1)$--dimensional isotropic Cauchy process.

\begin{figure} 
\begin{center}
\includegraphics[scale=0.7]{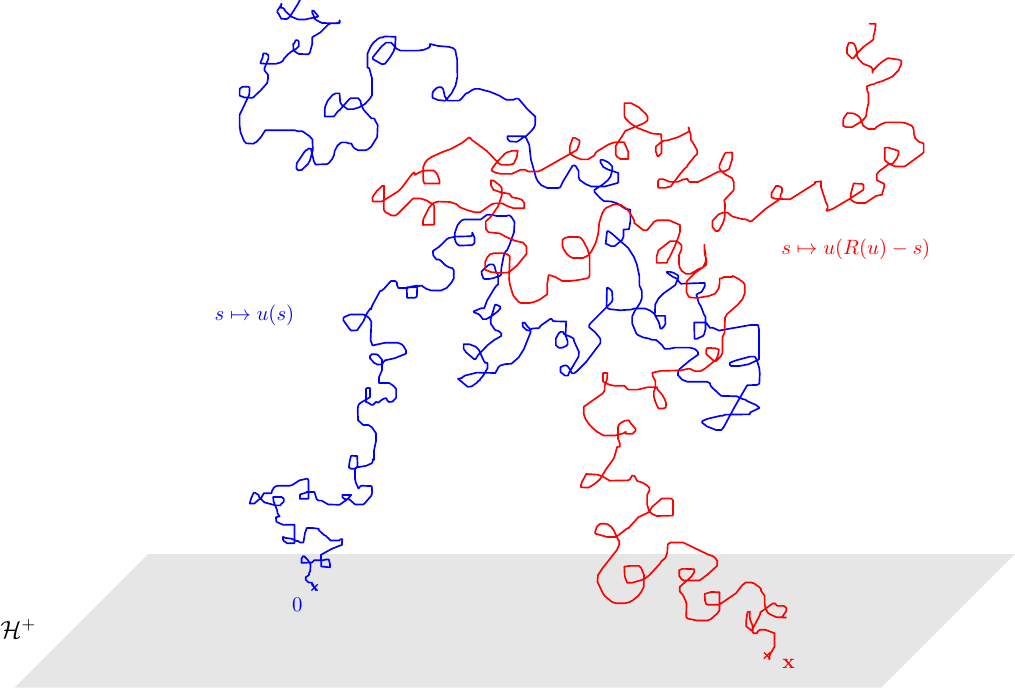}
\end{center}
\caption{Splitting of the excursion under the change of measure. Under the change of measure, the excursion splits into two independent Bessel-Brownian excursions (blue and red). For fixed height $a>0$, the sub-excursion above $a$ straddling the point at infinity is obtained by running two independent $d$--dimensional Brownian motions started from infinity and stopped when hitting the hyperplane $\Hcal_a$. }
\label{fig:change measures}
\end{figure}

\subsection{Extension to isotropic stable Lévy processes}
As in \cite{DS}, we can extend the previous construction to stable processes with index $\alpha\in(0,2)$. We recall that we have set $d\ge 3$, and that the  case $d=2$ was already treated in \cite{DS}. We will not provide all the details of the proofs since the arguments are similar to the Brownian case described above.

\medskip
\noindent \textbf{The excursion measure $\nalpha$.} We shall consider the following excursions, which consist in replacing the first $(d-1)$ entries of the previous setting by an isotropic $\alpha$--stable Lévy process in $\R^{d-1}$. We keep the notation in \cref{sec:excursion measure}, except that now is defined on the probability space a $(d-1)$--dimensional isotropic stable Lévy process $X^{d-1}$, and we consider the process $Z^d :=(X^{d-1},Z)$ with Brownian last coordinate. Then, we introduce the excursion process $(\ealpha_s,s>0)$ as
\begin{itemize}
    \item[(i)] if $\uptau_s-\uptau_{s^-}>0$, then 
    \[\ealpha_s : r\mapsto \left(X^{d-1}_{r+\uptau_{s^-}}-X^{d-1}_{\uptau_{s^-}}, Z_{r+\uptau_{s^-}}\right), \quad r\leq \uptau_s-\uptau_{s^-},\]
    \item[(ii)] if $\uptau_s-\uptau_{s^-}=0$, then $\ealpha_s = \partial$.
\end{itemize}
As in \cref{prop:excursion measure}, this defines a Poisson point process with intensity measure 
\[
\nalpha(\mathrm{d}u',\mathrm{d}z) := n(\mathrm{d}z) \Pb\Big((X^{d-1})^{R(z)}\in \mathrm{d}u'\Big).
\]
Let $\nalpha_+$ be the restriction of $\nalpha$ to positive excursions. We now want to condition $\nalpha_+$ on the endpoint of the excursion. For $\xbf\in\R^{d-1}$ and $r>0$, let $\Pbb{r}{\alpha, 0}{\xbf}$ denote the law of an $\alpha$--stable bridge from $0$ to $\xbf$ over $[0,r]$. In addition, we write $(p^{\alpha}_r, r\ge 0)$ for the transition densities of $X^{d-1}$. Throughout this section, we fix $\omega_d:= d-1+\frac{\alpha}{2}$.

\begin{Prop} \label{prop: disintegration stable}
The following disintegration formula holds:
\begin{equation} \label{eq: disintegration stable}
\nalpha_+ = \int_{\R^{d-1}\setminus \{0\}} \mathrm{d}\xbf \, \frac{C_d}{|\xbf|^{\omega_d}} \cdot \gammaalpha_{\xbf},
\end{equation}
where $\gammaalpha_{\xbf}$, $\xbf\in\R^{d-1}\setminus\{0\}$, are probability measures, and
\[
C_d = \frac{\alpha}{2\sqrt{2\pi}} \int_{\R_+} \mathrm{d}v \, p_1^{\alpha}(v\cdot\mathbf{1}) v^{\omega_d-1},
\]
where $\mathbf{1}$ denotes the ``north pole'' in $\Sb^{d-2}$. 
In addition, for all $\xbf\in\R^{d-1}\setminus\{0\}$,
\[
\gammaalpha_{\xbf} = \int_0^{\infty} \mathrm{d}r \frac{p_1^{\alpha}(r^{-1/\alpha}v\cdot\mathbf{1})}{2\sqrt{2\pi}r^{1+\frac{\omega_d}{\alpha}}} \Pbb{r|\xbf|^2}{\alpha, 0}{\xbf} \otimes \Pi_{r|\xbf|^2}.
\]
\end{Prop}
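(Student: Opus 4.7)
The plan is to adapt the Brownian argument for \cref{prop:disintegration} to the stable case, the main difference being that the transition density $p_s^\alpha$ is no longer explicit but only characterised through self-similarity and isotropy. Fix nonnegative measurable test functions $f:\Xscr^{d-1}\to\R_+$ and $g:\Xscr_0\to\R_+$. The stable analogue of \cref{prop:excursion measure}, combined with It\^o's description of the one-dimensional measure $n_+$ integrated over the excursion duration $s$, yields
\[
\nalpha_+(f\otimes g) = \int_0^\infty \frac{\mathrm{d}s}{2\sqrt{2\pi s^3}}\, \Pi_s[g] \,\Eb\Big[f\big((X^{d-1})^s\big)\Big].
\]
Conditioning on the terminal position $X^{d-1}_s$, which has density $p_s^\alpha$, and recognising the conditional law of the path given its endpoint as the stable bridge $\Pbb{s}{\alpha,0}{\xbf}$, this rewrites as
\[
\nalpha_+(f\otimes g) = \int_{\R^{d-1}}\mathrm{d}\xbf\int_0^\infty \frac{p_s^\alpha(\xbf)\,\mathrm{d}s}{2\sqrt{2\pi s^3}}\, \Pi_s[g] \, \Ebb{s}{\alpha,0}{\xbf}[f].
\]

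The heart of the proof is then to factor $|\xbf|^{-\omega_d}$ out of this double integral. Using the self-similar identity $p_s^\alpha(\xbf) = s^{-(d-1)/\alpha}p_1^\alpha(s^{-1/\alpha}\xbf)$ together with the isotropy of $X^{d-1}$, which allows one to rewrite $p_1^\alpha(s^{-1/\alpha}\xbf) = p_1^\alpha(s^{-1/\alpha}|\xbf|\,\mathbf{1})$, I would perform a change of variable of the form $s\mapsto r$ reflecting the natural stable scaling to bring the integrand into the claimed form. The exponent $1 + \omega_d/\alpha = 3/2 + (d-1)/\alpha$ of $r$ then arises as the sum of the $3/2$ coming from the It\^o density and the $(d-1)/\alpha$ coming from the density scaling. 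The constant $C_d$ is identified by Fubini: the further substitution $u = r^{-1/\alpha}v$ gives
\[
\int_0^\infty \frac{p_1^\alpha(r^{-1/\alpha}v\cdot\mathbf{1})}{2\sqrt{2\pi}\,r^{1+\omega_d/\alpha}}\,\mathrm{d}r = \frac{C_d}{v^{\omega_d}},
\]
with $C_d = \tfrac{\alpha}{2\sqrt{2\pi}}\int_0^\infty p_1^\alpha(u\cdot\mathbf{1})u^{\omega_d-1}\mathrm{d}u$. Dividing by this total mass produces the normalised probability measure $\gammaalpha_\xbf$ and the overall $C_d/|\xbf|^{\omega_d}$ prefactor in the disintegration.

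The main subtlety, absent from the Brownian case, is that there is no explicit formula for $p_s^\alpha$: every manipulation of this density has to go solely through scaling and isotropy, and one must carefully bookkeep the powers of $|\xbf|$ and $r$ that accumulate from the change of variables and the density rescaling. A useful consistency check is that in the Brownian limit $\alpha = 2$ one has $\omega_d = d$ and the integral defining $C_d$ reduces to a Gaussian moment, producing $C_d = \Gamma(d/2)/(2\pi^{d/2})$, so that \cref{prop:disintegration} is recovered.
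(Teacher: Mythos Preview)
Your proposal is correct and follows essentially the same route as the paper: start from the It\^o description of $\nalpha_+$, condition on the endpoint to produce the stable bridge, then use the scaling identity $p_s^\alpha(\xbf)=s^{-(d-1)/\alpha}p_1^\alpha(s^{-1/\alpha}\xbf)$ together with isotropy and the substitution $s=r|\xbf|^\alpha$ to extract the factor $|\xbf|^{-\omega_d}$. The paper carries this out explicitly and simply stops at ``the proposition follows''; your additional computation of $C_d$ via $u=r^{-1/\alpha}v$ and the $\alpha=2$ consistency check with \cref{prop:disintegration} are correct and go slightly beyond what the paper writes out.
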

\begin{proof}
Let $f$ and $g$ be two nonnegative measurable functions, respectively defined on $\Xscr^{d-1}$ and $\Xscr_0$. Following the proof of \cref{prop:disintegration}, we end up with
\begin{align*}
\int_{U^+} f(u')g(z)\nalpha_+(\mathrm{d}u',\mathrm{d}z)
&=
\int_0^{\infty} \frac{\mathrm{d}r}{2\sqrt{2\pi r^3}} \Pi_r[g]\Eb[f((X^{d-1})^r)] \\
&= \int_0^{\infty} \frac{\mathrm{d}r}{2\sqrt{2\pi r^3}} \int_{\R^{d-1}} \mathrm{d}\xbf \, p_r^{\alpha}(\xbf) \Pi_r[g]\Ebb{r}{\alpha, 0}{\xbf}[f(X^{d-1})].
\end{align*}
Note that, by self-similarity, for all $r>0$ and $\xbf\in\R^{d-1}$,
\[
p_r^{\alpha}(\xbf) = r^{-\frac{d-1}{\alpha}} p_1^{\alpha}(r^{-1/\alpha}\cdot\xbf).
\]
Hence
\[
\int_{U^+} f(u')g(z)\nalpha_+(\mathrm{d}u',\mathrm{d}z)
=
\int_0^{\infty} \frac{\mathrm{d}r}{2\sqrt{2\pi r^3}} \int_{\R^{d-1}} \mathrm{d}\xbf \, r^{-\frac{d-1}{\alpha}} p_1^{\alpha}(r^{-1/\alpha}\cdot\xbf) \Pi_r[g]\Ebb{r}{\alpha, 0}{\xbf}[f(X^{d-1})],
\]
and by the change of variables $u(r):=\frac{r}{|\xbf|^{\alpha}}$, this is
\begin{multline*}
\int_{U^+} f(u')g(z)\nalpha_+(\mathrm{d}u',\mathrm{d}z) \\
=
\int_{\R^{d-1}} \frac{\mathrm{d}\xbf}{|\xbf|^{\omega_d}} \int_0^{\infty} \frac{\mathrm{d}u}{2\sqrt{2\pi u^3}} u^{-\frac{d-1}{\alpha}} p_1^{\alpha}\Big(u^{-1/\alpha}\cdot\frac{\xbf}{|\xbf|}\Big) \Pi_{u|\xbf|^{\alpha}}[g]\Ebb{u|\xbf|^{\alpha}}{\alpha, 0}{\xbf}[f(X^{d-1})].
\end{multline*}
Observe that the isotropy of $X^{d-1}$ yields the relationship $p_1^{\alpha}\Big(u^{-1/\alpha}\cdot\frac{\xbf}{|\xbf|}\Big) = p_1^{\alpha}(u^{-1/\alpha}\cdot \mathbf{1})$, so that
\[
\int_{U^+} f(u')g(z)\nalpha_+(\mathrm{d}u',\mathrm{d}z) \\
=
\int_{\R^{d-1}} \frac{\mathrm{d}\xbf}{|\xbf|^{\omega_d}} \int_0^{\infty} \mathrm{d}u \frac{p_1^{\alpha}(u^{-1/\alpha}\cdot \mathbf{1})}{2\sqrt{2\pi}u^{1+\frac{\omega_d}{\alpha}}}  \Pi_{u|\xbf|^{\alpha}}[g]\Ebb{u|\xbf|^{\alpha}}{\alpha, 0}{\xbf}[f(X^{d-1})].
\]
The proposition follows.
\end{proof}
\begin{Rk}
We emphasize that the proof of \cref{prop: disintegration stable} uses the isotropy assumption on $X^{d-1}$, and indeed formula \eqref{eq: disintegration stable} shows that the excursion measure $\nalpha_+$ assigns a weight to the endpoint $\xbf$ which only depends on its radial part $|\xbf|$. If $X^{d-1}$ were not isotropic, then one would have to deal with the angular part of $\xbf$ in the disintegration.
\end{Rk}
The following proposition is a Bismut description of $\nalpha_+$, which is easily extended from \cref{prop:Bismut}. The picture looks roughly the same as in \cref{fig:Bismut}, albeit the two trajectories have their first $(d-1)$ entries distributed as an isotropic stable process in $\R^{d-1}$.
\begin{Prop} \label{prop:Bismut stable} (Bismut's description of $\nalpha_+$)

\noindent Let $\overline{\nalpha}_+$ be the measure defined on $\R_+\times U^+$ by
\[\overline{\nalpha_+}(\mathrm{d}t,\mathrm{d}u) = \mathds{1}_{\{0\leq t\leq R(u)\}} \mathrm{d}t \, \nalpha_+(\mathrm{d}u).\]
Then under $\overline{\nalpha_+}$ the "law" of $(t,(u',z))\mapsto z(t)$ is the Lebesgue measure $\mathrm{d}A$ on $\R_+$, and conditionally on $z(t)=A$, $u^{t, \leftarrow}=\left(u(t-s)-u(t)\right)_{0\leq s\leq t}$ and $u^{t,\rightarrow}=\left(u(t+s)-u(t)\right)_{0\leq s\leq R(u)-t}$ are independent and evolve as $Z^d$ killed when reaching the hyperplane $\left\{x_d=-A\right\}$. 
\end{Prop}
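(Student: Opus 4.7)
The proof will mirror the proof of \cref{prop:Bismut}, adding only what is needed to handle the stable coordinates. The starting point is the factorisation
\[
\overline{\nalpha_+}(\mathrm{d}t,\mathrm{d}u) = \mathds{1}_{\{0\leq t\leq R(z)\}} \, \mathrm{d}t \, n_+(\mathrm{d}z) \, \Pb\big( (X^{d-1})^{R(z)} \in \mathrm{d}u' \big),
\]
coming from the definition of $\nalpha_+$ in \cref{sec:excursion measure}. The idea is to disintegrate this measure by first running Bismut's classical description for the one-dimensional Itô measure $n_+$ on the vertical coordinate, and then using the independence and Markov property of $X^{d-1}$ to handle the horizontal coordinates.

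More precisely, I would first apply Bismut's description of $n_+$ (Chap. XII of \cite{RY}) to the last coordinate. This says that under $\mathds{1}_{\{0\le t\le R(z)\}}\mathrm{d}t\, n_+(\mathrm{d}z)$, the variable $z(t)$ has ``law'' $\mathrm{d}A$, and conditionally on $z(t)=A$, $(z(t-s)-z(t), 0\le s\le t)$ and $(z(t+s)-z(t), 0\le s\le R(z)-t)$ are independent linear Brownian motions started from $0$ and killed when hitting $-A$. Next, for the first $(d-1)$ coordinates, conditionally on $R(z)$ and on $t$, the Markov property of $X^{d-1}$ at time $t$ yields that $(X^{d-1}(t+s)-X^{d-1}(t), 0\le s\le R(z)-t)$ is an independent copy of $X^{d-1}$, independent of $(X^{d-1}(s), 0\le s\le t)$. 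Moreover, because $X^{d-1}$ is isotropic, it is in particular symmetric, so the time-reversed increment process $(X^{d-1}(t-s)-X^{d-1}(t), 0\le s\le t)$ is distributed as $(X^{d-1}(s), 0\le s\le t)$. Putting together these two independence statements, combined with the independence of $X^{d-1}$ and $Z$, one obtains that conditionally on $z(t)=A$, the processes $u^{t,\leftarrow}$ and $u^{t,\rightarrow}$ are independent, and each is distributed as $Z^d$ run until the last coordinate reaches $-A$.

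To formalise this, the natural route is to test against a product functional $F(t, u^{t,\leftarrow}, u^{t,\rightarrow}) = h(t,z(t)) f(u^{t,\leftarrow}) g(u^{t,\rightarrow})$, rewrite the integral against $\overline{\nalpha_+}$ using the factorisation above, apply Bismut for $n_+$ to the $z$--part, and finally condition on the value of $X^{d-1}(t)$ to separate the pre-$t$ and post-$t$ parts of the stable process. An additional change of variables (translation by $X^{d-1}(t)$) removes the starting point dependence, as the stable part of the killed process has translation-invariant distribution.

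The main subtlety, and arguably the only place where isotropy enters, is the time-reversal step for the first $(d-1)$ coordinates: the reversed increment process of a Lévy process from a fixed time $t$ equals in law $(-X^{d-1}(s), 0\le s\le t)$, which one wants to identify with $(X^{d-1}(s), 0\le s\le t)$. This identification uses precisely the symmetry of the isotropic stable process. Everything else is a direct transcription of the Brownian argument, so no separate obstacle remains.
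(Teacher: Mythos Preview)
Your proposal is correct and follows exactly the route the paper indicates: the paper does not give a detailed proof but simply states that the result ``is easily extended from \cref{prop:Bismut}'', which itself is declared ``a straightforward consequence of Bismut's description of the one-dimensional It\^o measure $n$''. Your argument spells this out precisely---Bismut on the last coordinate, then the Markov property and time-reversal (via symmetry of the isotropic stable process) for the first $d-1$ coordinates---so there is nothing to add.
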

\noindent One of the consequences of this decomposition is that for $\nalpha_+$--almost every excursion, there is no loop above any level. More precisely, recall the definition of $\mathscr{L}$ in \eqref{eq: loop}. Then $\nalpha_+(\mathscr{L}) =0$. The proof can be taken \textit{verbatim} from \cref{prop: loop}, using that a stable process in dimension $d-1\ge 2$ does not hit points (see \cite[II, Corollary 17]{Ber}).
\bigskip

\begin{figure} 
\begin{center}
\includegraphics[scale=0.8]{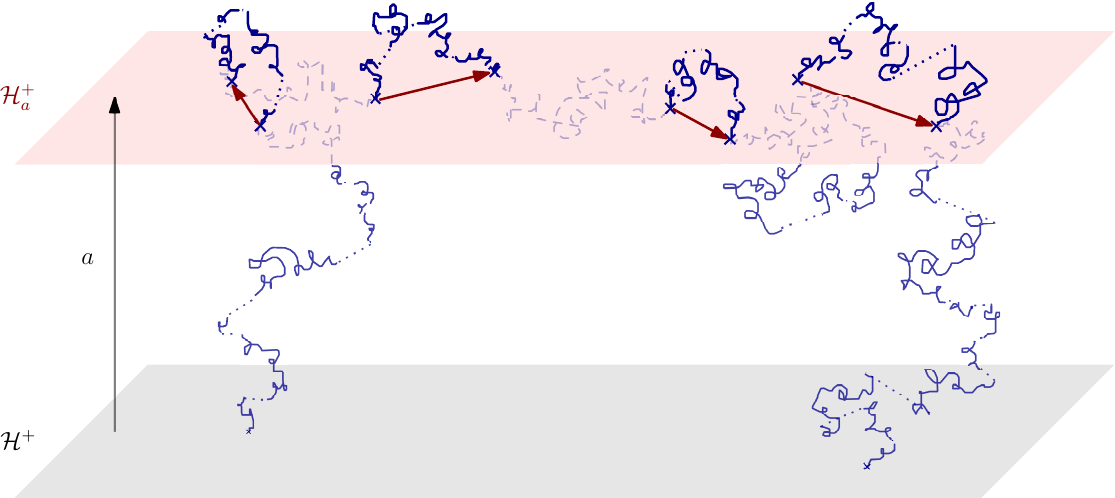}
\end{center}
\caption{Slicing of an excursion in $\Hcal_+$ with stable first two coordinates, in dimension $d=3$. The excursion is drawn in blue. The trajectory is càdlàg but jumps never occur for the height. We record the \emph{length} (in red) of the sub-excursions (in dark blue) made above $\Hcal_a$.}
\label{fig:slicing excursion stable}
\end{figure}

\bigskip
\noindent \textbf{The branching property under $\nalpha_+$.} We will be interested in cutting excursions with hyperplanes at varying heights, and study the \emph{length} of the subexcursions above these hyperplanes (\cref{fig:slicing excursion stable}). As in \cref{prop:branching}, this exhibits a branching structure that we summarise in the next result, in the language introduced in \cref{sec: slicing excursions}.
\begin{Prop}
For all $A\in \Gcal_a$, and all nonnegative measurable functions $F_1,\ldots,F_k:U^+\rightarrow \R_+, k\ge 1,$ 
\[
\nalpha_{+}\left(\mathds{1}_{\{T_a<\infty\}}\mathds{1}_A \prod_{i=1}^k F_i(e_i^{a,+}) \right)
=
\nalpha_{+}\left(\mathds{1}_{\{T_a<\infty\}}\mathds{1}_A \prod_{i=1}^k \gammaalpha_ {\xbf^{a,+}_i}[F_i]\right),
\]
and the same also holds under $\gammaalpha_{\xbf}$ for all $\xbf\in \R^{d-1}\setminus \{0\}$.
\end{Prop}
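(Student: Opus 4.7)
The proof follows the same strategy as in the Brownian case (\cref{prop:branching}), which is itself detailed in \cite{AD} for the planar case. I would begin by establishing the analogue of \cref{prop:Markov} under $\nalpha_+$: on the event $\{T_a < \infty\}$, conditionally on $\Fcal_{T_a}$, the shifted trajectory $(u(T_a + t) - u(T_a),\, 0 \le t \le R(u) - T_a)$ is distributed as $Z^d$ stopped upon hitting $\{x_d = -a\}$. This is immediate from the strong Markov property of $Z^d$ combined with the Poisson point process construction of $\nalpha$ in \cref{prop:excursion measure}.

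The key ingredient is then the following: conditionally on $\Gcal_a$, the collection of post-$T_a$ excursions $(e_i^{a,+})_{i \ge 1}$, each translated so as to start from $0 \in \R^d$, is distributed as the atoms of a Poisson point process with intensity $\nalpha_+$, stopped at the first excursion reaching $\{x_d = -a\}$. This follows from classical Brownian excursion theory applied to the last coordinate $z$, combined with the independence of $z$ and $X^{d-1}$: the Brownian local time of $z$ at level $a$ parametrizes the excursions above $\Hcal_a$, and within each such excursion the first $(d-1)$ coordinates evolve as a fresh, independent stable process, yielding exactly the intensity $\nalpha_+$. Observe further that the sizes $\xbf_i^{a,+}$ are $\Gcal_a$-measurable: they are recorded as successive displacements between entries into and exits from the region $\{x_d > a\}$, all of which are encoded in $u^{<a}$. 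Applying the disintegration formula of \cref{prop: disintegration stable} to each atom gives that, conditionally on its endpoint $\xbf$, the corresponding excursion has law $\gammaalpha_\xbf$. Together, these observations yield the branching property under $\nalpha_+$.

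To extend the statement to $\gammaalpha_\xbf$ for every $\xbf \in \R^{d-1} \setminus \{0\}$, I would multiply the identity just obtained under $\nalpha_+$ by an arbitrary nonnegative measurable function of the endpoint $u'(R(u))$ and invoke the disintegration formula \eqref{eq: disintegration stable}. This yields the desired identity for Lebesgue-almost every $\xbf$, and a continuity argument analogous to the one at the end of the proof of \cref{prop: key many to one} then extends it to every $\xbf \in \R^{d-1} \setminus \{0\}$.

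The main obstacle is the second step---identifying the Poisson point process structure of the excursions above $\Hcal_a$ together with the fact that their intensity is exactly $\nalpha_+$. The fact that $X^{d-1}$ has jumps, in contrast to the continuous Brownian motion of \cref{prop:branching}, poses no extra difficulty: the excursion decomposition is entirely driven by the local time of the last coordinate $z$, which remains a standard Brownian motion independent of $X^{d-1}$, so the stable component simply evolves freely within each excursion of $z$.
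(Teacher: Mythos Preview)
Your proposal is correct and follows essentially the same approach as the paper, which does not give a separate proof but refers to the Brownian case (\cref{prop:branching}) and ultimately to \cite{AD}. Your observation that the jumps of $X^{d-1}$ cause no additional difficulty because the excursion decomposition is driven entirely by the Brownian last coordinate is exactly the point the paper relies on when asserting that the Brownian arguments carry over.
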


\medskip
\noindent \textbf{Martingale and spine decomposition under $\gammaalpha_{\xbf}$.} 
In line with \cref{thm:martingale} and \cref{thm:law change measures}, we reveal the martingale in the stable setting and describe the law after the change of measure. The notation is implicitly taken from the Brownian case. All the proofs are omitted because they are simple extensions of their Brownian analogues, going through a many-to-one formula akin to \cref{prop: key many to one}. Recall that $\omega_d = d-1+\frac{\alpha}{2}$.
\begin{Thm}
Under $\gammaalpha_{\xbf}$ for all $\xbf\in \R^{d-1}\setminus\{0\}$, the process
\[ 
\Mcal^{\alpha}_a := \mathds{1}_{\{T_{a}<\infty\}} \cdot \sum_{e\in \Hcal_a^+} |\Delta e|^{\omega_d}, \quad a\ge 0,
\]
is a martingale with respect to $(\Gcal_a, a\ge 0)$.
\end{Thm}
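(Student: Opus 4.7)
The plan is to mimic the proof of \cref{thm:martingale} in the Brownian setting. By the branching property under $\gammaalpha_{\xbf}$ applied to a single level $a'\le a$, it suffices to prove the ``one-step'' identity
\[
\gammaalpha_{\xbf}[\Mcal^\alpha_a] \;=\; |\xbf|^{\omega_d}, \qquad \xbf\in \R^{d-1}\setminus\{0\},\ a\ge 0.
\]
This identity will in turn be obtained by specialising $F\equiv 1$ in a stable analogue of the many-to-one formula \cref{prop: key many to one}, namely
\begin{equation}\label{eq:m2o-stable-proposal}
\gammaalpha_{\xbf}\Bigl[\mathds{1}_{\{T_a<\infty\}}\sum_{s^+ \in [0,\ell^a_{R(u)}]}|\Delta \efrak^a_s|^{\omega_d} F({\mathfrak u}_1^s,{\mathfrak u}_2^s)\Bigr]
= |\xbf|^{\omega_d}\,\Eb\bigl[F\bigl(({\mathfrak h}^\alpha_1(t),t\le S^a_1),({\mathfrak h}^{\alpha,\xbf}_2(t),t\le S^a_2)\bigr)\bigr],
\end{equation}
where now ${\mathfrak h}^\alpha_1,{\mathfrak h}^{\alpha,\xbf}_2$ are independent stable-Bessel excursions (the first $d-1$ coordinates distributed as $X^{d-1}$, the last as an independent $3$-dimensional Bessel process), started at $0$ and $(\xbf,0)$ respectively, and $S^a_i$ is the last passage time at $a$ of the height coordinate.

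To derive \eqref{eq:m2o-stable-proposal}, I would follow the derivation of \cref{prop: key many to one} line by line, replacing every use of \cref{prop:disintegration} by its stable counterpart \cref{prop: disintegration stable}. After applying the master formula \cite[Prop.~XII.1.10]{RY} to the Poisson process $(\efrak^a_s)$ and separating the local time integral (which produces ${\mathfrak u}_1^s$) from the ``future'' piece (which produces ${\mathfrak u}_2^s$ after translation by $\xbf\in\R^{d-1}$), the claim reduces to identifying the law of the future piece. The future trajectory has first $d-1$ coordinates equal to $X^{d-1}$ and last coordinate equal to $Z$ killed at hitting $-a$; time-reversing the last coordinate yields a $3$-dimensional Bessel process on $[0,S^a_2]$, while time-reversing the stable first coordinates (under the stationary Lebesgue measure on $\R^{d-1}$) yields the same process $X^{d-1}$ started from $\xbf$ by isotropy and Lebesgue-reversibility. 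The change of variable $u = r/|\xbf|^\alpha$ applied to the $\gammaalpha_{\xbf}$-disintegration then produces the factor $|\xbf|^{\omega_d}$: the $r^{-3/2}$ from Itô's formula contributes $|\xbf|^{-3\alpha/2}$, the Jacobian $|\xbf|^\alpha$ compensates to $|\xbf|^{-\alpha/2}$, and the scaling $p_r^\alpha(\xbf)=r^{-(d-1)/\alpha}p_1^\alpha(r^{-1/\alpha}\xbf)$ contributes the remaining $|\xbf|^{-(d-1)}$; isotropy of $X^{d-1}$ is what allows the $p_1^\alpha(r^{-1/\alpha}\xbf/|\xbf|)$ to be replaced by $p_1^\alpha(r^{-1/\alpha}\mathbf{1})$, so the right-hand side factorises cleanly as in the Brownian case.

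Once \eqref{eq:m2o-stable-proposal} is established, setting $F\equiv 1$ gives $\gammaalpha_{\xbf}[\Mcal^\alpha_a] = |\xbf|^{\omega_d}\Eb[\mathds{1}]$, and since the $z$-component of each stable-Bessel excursion is a transient $3$-dimensional Bessel process, the events $\{S^a_i<\infty\}$ have probability one for $i=1,2$. Combined with the branching property, this yields the $(\Gcal_a)$-martingale property of $(\Mcal^\alpha_a)_{a\ge 0}$ in the usual way. The main obstacle, and the only point where the stable case genuinely differs from the Brownian one, is keeping track of the exponent $\omega_d=d-1+\alpha/2$: it is precisely the combination of the $r^{-3/2}$ weight from the It\^o measure with the scaling dimension $(d-1)/\alpha$ of the isotropic stable density that produces this value, and isotropy of $X^{d-1}$ is essential so that only the radial part $|\xbf|$ appears and the resulting family of measures $\gammaalpha_{\xbf}$ depends on $\xbf$ in the homogeneous way required for the martingale identity.
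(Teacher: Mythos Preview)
Your proposal is correct and follows exactly the approach the paper intends: reduce via the branching property to the identity $\gammaalpha_{\xbf}[\Mcal^\alpha_a]=|\xbf|^{\omega_d}$, and obtain the latter from the stable analogue of \cref{prop: key many to one} with $F\equiv 1$. The paper in fact omits the details of the stable many-to-one formula as a ``simple extension'' of the Brownian case, so your sketch of how the exponent $\omega_d$ arises and where isotropy and Lebesgue-reversibility enter simply fills in what the paper leaves implicit.
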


\bigskip
\begin{figure} 
\begin{center}
\includegraphics[scale=0.8]{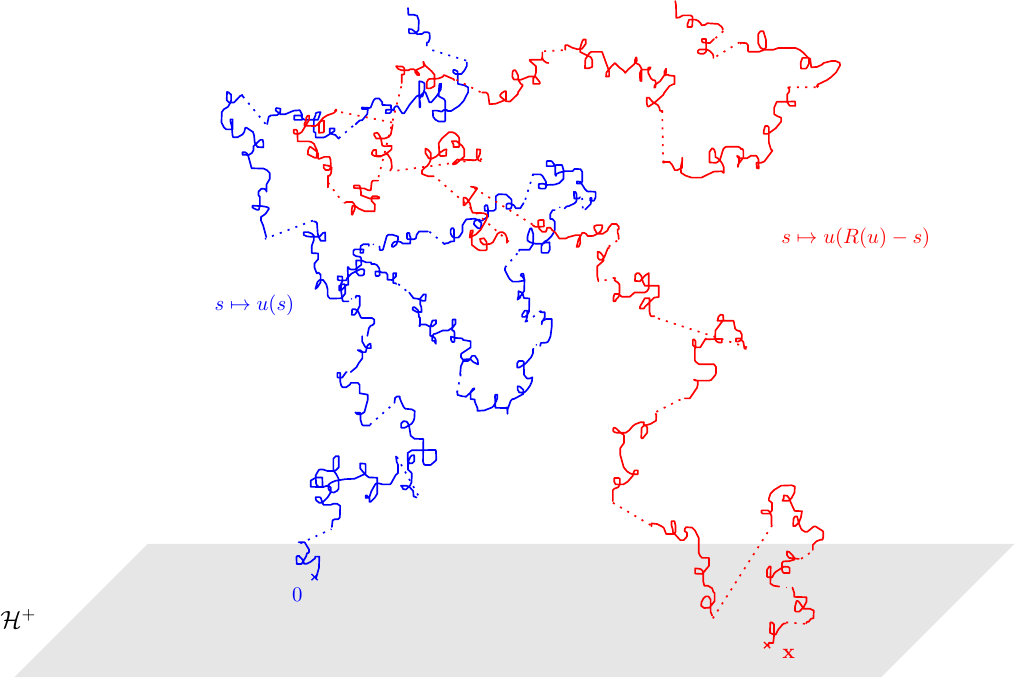}
\end{center}
\caption{The excursion $u$ seen under $\mu_{\xbf}^{\alpha}$. Under the change of measure, $u$ splits into two independent $(\alpha,\Hcal_+)$--excursions (blue and red), which are the analogues of the Brownian half-space excursions appearing in \cref{fig:change measures}, when the first $(d-1)$ coordinates are replaced with an isotropic stable process. The length of the sub-excursion above some height $a$ straddling the point at infinity is obtained by subordinating the isotropic process at the Brownian hitting time of level $a$. Let us stress once more that the last coordinate is continuous, so that this length is well defined for all positive height $a>0$.}
\label{fig:change measures stable}
\end{figure}

\noindent Let $\xbf\in\R^{d-1}\setminus\{0\}$. Consider the change of measure \added{$\mu^{\alpha}_{\xbf}$} such that
\[
\frac{\mathrm{d}\mu^{\alpha}_{\xbf}}{\mathrm{d}\gammaalpha_{\xbf}}\bigg|_{\Gcal_a} := \frac{\Mcal^{\alpha}_a}{|\xbf|^{\omega_N}}, \quad a\ge 0.
\]
We now come to the description of the excursion under $\mu^{\alpha}_{\xbf}$. Call $(\alpha,\Hcal^+)$--excursion a process in $\R^d$ whose first $(d-1)$ entries form an isotropic $\alpha$--stable Lévy process, and whose last entry is an independent $3$--dimensional Bessel process starting at $0$ (so that this process actually remains in $\Hcal^+$). We set $T_a:= \inf\{0\le t\le R(u), \, z(t)=a\}$, and $S_a:= \inf\{0\le t\le R(u), \, z(R(u)-t)=a\}$. 

\begin{Thm} \label{thm:law change measures stable}
Under $\mu^{\alpha}_{\xbf}$, for all $a>0$, the processes $(u(s), s\le T_a)$ and $(u(R(u)-s), s\le S_a)$ are independent $(\alpha,\Hcal^+)$--excursions started respectively from $0$ and $(\xbf,0)$ and stopped when hitting $\Hcal_a$.
\end{Thm}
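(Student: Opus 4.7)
The plan is to mirror the strategy of the Brownian case (\cref{thm:martingale,thm:law change measures}) by first establishing a stable analogue of the many-to-one formula in \cref{prop: key many to one}, and then deducing the description of the excursion under $\mu^{\alpha}_{\xbf}$ by reading it off for suitable test functions. Throughout, I will work with the same notation $\mathfrak{u}_1^s, \mathfrak{u}_2^s$ for the trajectories before and time-reversed after the excursion $\mathfrak{e}^{\alpha, a}_s$ at level $a$, and use the last-coordinate local time $\ell^a$ to enumerate excursions at height $a$.

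First I would prove: for any nonnegative measurable $F:\Xscr\times \Xscr\to \R_+$ and $\xbf\in\R^{d-1}\setminus\{0\}$,
\[
\gammaalpha_{\xbf}\bigg[\mathds{1}_{\{T_a<\infty\}}\sum_{s^+\in[0,\ell^a_{R(u)}]} |\Delta \mathfrak{e}^{\alpha, a}_s|^{\omega_d} F(\mathfrak{u}_1^s,\mathfrak{u}_2^s)\bigg]
= |\xbf|^{\omega_d}\,\Eb\big[F(\mathfrak{g}_1(t), t\in[0,S_1^a])\,F(\mathfrak{g}_2^{\xbf}(t), t\in[0,S_2^a])\big],
\]
where $\mathfrak{g}_1,\mathfrak{g}_2^{\xbf}$ are two independent $(\alpha,\Hcal^+)$--excursions issued from $0$ and $(\xbf,0)$ respectively, and $S_i^a$ are their last-passage times at $a$. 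The argument proceeds as for \cref{prop: key many to one}: starting under $\nalpha_+$, I apply the master formula of Revuz-Yor to the Poisson point process of excursions $(\mathfrak{e}^{\alpha, a}_s, s\in(0,\ell^a_{R(u)}))$ with intensity $\nalpha_+$ stopped on first hitting $\{x_d=-a\}$, and then use the stable version of the disintegration \cref{prop: disintegration stable} to introduce the endpoint variable $\xbf$.

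The two key analytic inputs are then: (i) the $(d-1)$--dimensional isotropic stable process $X^{d-1}$ is a Lévy process, hence is reversible with respect to Lebesgue measure on $\R^{d-1}$, so that under Lebesgue sampling of a starting point $\xbf$ the time-reversed trajectory $(\xbf + X^{d-1}(T^Z_{-a} - s) - X^{d-1}(T^Z_{-a}))_{0\le s\le T^Z_{-a}}$ has the law of an isotropic stable process started from a Lebesgue-distributed point, stopped at the independent Brownian hitting time; and (ii) the last coordinate $(a + Z(T^Z_{-a} - s))_{s\ge 0}$ time-reverses to a $3$--dimensional Bessel process started from $0$ and run until its last passage at $a$, by the classical Williams/Pitman identity. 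Combining these two ingredients provides the right-hand side as the law of a pair of independent $(\alpha, \Hcal^+)$--excursions at $0$ and $(\xbf,0)$, killed upon exiting $\{x_d\le a\}$ forever. A standard continuity argument handles the disintegration in $\xbf$.

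To conclude \cref{thm:law change measures stable}, I specialise the many-to-one formula above to a functional $F(\mathfrak{u}_1^s,\mathfrak{u}_2^s)$ that depends only on $u^{<a}$ (in the notation \eqref{eq: A_t time change}); the sum over $s^+$ then collapses to $\mathcal{M}^{\alpha}_a\,F(u^{<a})$, since all the subexcursions above $\Hcal_a$ carry the mass of $\mathcal{M}^{\alpha}_a$ and $u^{<a}$ is $\Gcal_a$--measurable. Dividing by $|\xbf|^{\omega_d}$ and reading the resulting identity as an expectation under $\mu^{\alpha}_{\xbf}$, I obtain precisely that the concatenation of $(u(s), s\le T_a)$ and the time-reversed post-$S_a$ piece is distributed as the concatenation of two independent $(\alpha, \Hcal^+)$--excursions started at $0$ and $(\xbf,0)$ and stopped on $\Hcal_a$, which is the claim.

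The main obstacle in carrying this through is item (i) above: verifying that the change of variables $\xbf \mapsto \xbf + X^{d-1}(T^Z_{-a})$ produces, after reversal in time, an isotropic stable process started at $(\xbf,0)$ and killed on exiting $\Hcal^+$. This requires the reversibility of the stable ordinate with respect to Lebesgue measure (used under $\nalpha_+$ before disintegration in $\xbf$) together with isotropy to ensure that the resulting law depends on $\xbf$ only through the $(\alpha,\Hcal^+)$ description; the Brownian-case arguments go through once this is established, since the vertical coordinate is still Brownian and the three-dimensional Bessel reversal is unchanged.
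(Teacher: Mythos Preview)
Your proposal is correct and follows exactly the approach the paper indicates: establish a stable many-to-one formula by rerunning the proof of \cref{prop: key many to one} with $X^{d-1}$ in place of $B^{d-1}$ (using L\'evy reversibility with respect to Lebesgue measure and the unchanged Bessel time-reversal for the last coordinate), then specialise to $u^{<a}$--measurable functionals to read off the law under $\mu^{\alpha}_{\xbf}$. One small slip: in your displayed identity the right-hand side should read $\Eb\big[F\big((\mathfrak{g}_1(t), t\in[0,S_1^a]),\,(\mathfrak{g}_2^{\xbf}(t), t\in[0,S_2^a])\big)\big]$, since $F$ takes a pair of trajectories.
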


\cref{fig:change measures stable} illustrates the theorem.

\medskip
The proof of Theorem \ref{thm:excal} then follows similarly as in the proof of Theorem \ref{thm:exc}. Indeed, we note that the process
\[
\mathbf{Z}(a) := \left\{\left\{ \Delta e, \; e\in\Hcal_a^+ \right\}\right\}, \quad a\ge 0,
\]
is a spatial growth-fragmentation process under $\gammaalpha_{\xbf}$. One could fiddle with the ideas of \cite[Theorem 6.8]{DS} in order to define an Eve cell process driving $\mathbf{Z}$, but beware that the (signed) growth-fragmentation process described therein is not isotropic as such (one needs to adjusts the constants $c_+$ and $c_-$ to recover an isotropic process). \cref{thm:law change measures stable} provides the law of the spine as an isotropic $(d-1)$--dimensional $\frac{\alpha}{2}$--stable process.


\bibliography{biblio}
\bibliographystyle{alpha}

\end{document}